\theoremstyle{plain}
\newtheorem{theorem}{{Theorem}} 
\newtheorem{lemma}[theorem]{{Lemma}}
\newtheorem{proposition}[theorem]{{Proposition}}
\newtheorem{corollary}[theorem]{{Corollary}}
\newtheorem{conjecture}[theorem]{{Conjecture}}
\newtheorem{question}[theorem]{{Question}}
\newtheorem{remark}[theorem]{{Remark}}
\theoremstyle{definition}
\newtheorem{definition}[theorem]{{Definition}}
\newtheorem{notation}[theorem]{{Notation}}
\newcommand{\C}{\mathbb{C}}
\newcommand{\R}{\mathbb{R}}
\newcommand{\Z}{\mathbb{Z}}
\newcommand{\essl}{\mathfrak{sl}}
\newcommand{\CAKh}{\mathrm{CAKh}} 
\newcommand{\AKh}{\mathrm{AKh}}
\newcommand{\rank}{\mathrm{rank}~}
\newcommand{\wrap}{\mathrm{wrap}} 
\newcommand{\ww}{\mathbb{W}}
\newcommand{\vv}{\mathbb{V}}
\newcommand{\grk}{\mathrm{gr}_k}
\newcommand{\dep}{\mathrm{Dep}}
\newcommand{\kerat}{\mathcal{K}}
\newcommand{\imgat}{\mathcal{I}}
\newcommand{\wmerge}{\mathcal{W} \sqcup \mathcal{W} \rightarrow \mathcal{W}}
\newcommand{\wsplit}{\mathcal{W} \rightarrow \mathcal{W} \sqcup \mathcal{W}}
\newcommand{\veemerge}{\mathcal{V} \sqcup \mathcal{V} \rightarrow \mathcal{W}}
\newcommand{\veesplit}{\mathcal{W} \rightarrow \mathcal{V} \sqcup \mathcal{V}}
\newcommand{\mixmerge}{\mathcal{V} \sqcup \mathcal{W} \rightarrow \mathcal{V}}
\newcommand{\mixsplit}{\mathcal{V} \rightarrow \mathcal{V} \sqcup \mathcal{W}}
\newcommand{\KBSM}{\mathrm{KBSM}} 
\newcommand{\gr}{\mathrm{gr}} 
\newcommand{\crossing}{
    \begin{tikzpicture}[scale=.25, baseline=-.67ex]
        \draw (-1,1) -- (1,-1);
        \filldraw[white] (0,0) circle (.4cm);
        \draw (-1,-1) -- (1,1);
    \end{tikzpicture}
}
\newcommand{\vertres}{
    \begin{tikzpicture}[scale=.25, baseline=-.67ex]
        \draw (-1,-1) .. controls (0,0) .. (-1,1);
        \draw (1,-1) .. controls (0,0) .. (1,1);
    \end{tikzpicture}
}
\newcommand{\horizres}{
    \begin{tikzpicture}[scale=.25, baseline=-.67ex]
        \begin{scope}[rotate=90]
            \draw (-1,-1) .. controls (0,0) .. (-1,1);
            \draw (1,-1) .. controls (0,0) .. (1,1);
        \end{scope}
    \end{tikzpicture}
}
\author[B.~Daniels]{Benjamin Daniels}
\address{Department of Mathematics, UC Davis, One Shields Ave., Davis, CA 95616-8633, U.S.A.
}
\email{\href{mailto:bkdaniels@ucdavis.edu}{bkdaniels@ucdavis.edu}}
\author[M.~Zhang]{Melissa Zhang}
\address{Department of Mathematics, UC Davis, One Shields Ave., Davis, CA 95616-8633, U.S.A.
}
\email{\href{mailto:mlzhang@ucdavis.edu}{mlzhang@ucdavis.edu}}
\title{On the Categorified Wrapping Number Conjecture}
\date{\today}
\begin{document}

\begin{abstract}
    We prove the Categorified Wrapping Number Conjecture for large classes of annular links, including alternating annular links and tangle closures exhibiting plumbed link phenomena. 
    We do so by characterizing when a resolution is sufficient to produce a nonzero homology class in $k$-grading $\wrap(L)$ on its own. 
    This characterization primarily concerns the type of crossing resolutions abutting trivial circles.   
\end{abstract}

\maketitle

\tableofcontents

\section{Introduction} 
\label{sec:intro}

    In 2000, Khovanov categorified the Jones polynomial link invariant \cite{Khovanov-original}, sparking a revolution in the development of link homologies. 
    In the following years, his theory has been adapted and generalized in numerous ways (see \cite{BN-tanglesandcobs}, \cite{Lee-leehom}, and \cite{ras-s} for some highly impactful early constructions). 
    
    One such generalization is Asaeda--Przytycki--Sikora's extension of Khovanov homology to links in $I$-bundles over surfaces \cite{APS-khovanovhomologyoversurfaces}, where $I$ is the unit interval. This homology theory is sometimes referred to as \emph{APS homology} in reference to the authors' names.
    The most popular version of APS homology is \emph{annular Khovanov homology}, which provides an invariant for \emph{annular links}, i.e.\ links in the thickened annulus.
    
    Let $A = \R^2 - \{0\}$, and let $L \subset A \times I$ be an annular link. 
    The \emph{annular Khovanov homology} of $L$, denoted by $\AKh(L)$, is a triply-graded homology-type invariant of $L$ computed from an \emph{annular diagram} for $L$, i.e.\ a link diagram that misses the origin, which we will denote by an asterisk (*) in the plane. 
    In addition to the homological and quantum gradings inherited from Khovanov homology, $\AKh$ has an additional `$k$-grading' determined by the location of the $*$: this endows the usual Khovanov complex with a $k$-filtration grading, and $\AKh$ is the homology of the associated graded object.

    Annular Khovanov homology has attracted considerable interest over the past decades.
    For example, $\AKh$ admits spectral sequences to gauge and Floer theories (see \cite{Roberts-akhfloer} and \cite{Yi-instantakh}  ), enjoys an $\mathfrak{sl}_2(\C)$ action (see \cite{GLW-sl2action} and \cite{Kim-caltechsl2thesis}), and has been extended to equivariant settings (see \cite{BPW-AKhq} and \cite{Akhmechet-equivariant}). 
   
    As a categorification of the Kauffman bracket skein module of the annulus $\KBSM(A)$ \cite{Prz-skein}, annular Khovanov homology inherits long-open questions from skein theory, including the conjecture at the center of the present article. 
    The \emph{wrapping number} of an annular link $L$, denoted by $\wrap(L)$, is the minimal \emph{geometric} intersection between a link $L$ and a meridional disk in $A \times I$. 
    This quantity is related to the annular \emph{winding number grading}, or \emph{$k$-grading}, in both $\KBSM(A)$ and $\AKh$, which keeps track of the \emph{algebraic} wrapping of each Kauffman state around the $*$, in a sense.
    In particular, $\AKh(L)$ is supported in $k$-gradings with absolute value $\leq \wrap(L)$ \cite{GLW-sl2action}.

    This naturally leads to the following conjecture of Grigsby from 2010 \footnote{See the anecdote under Conjecture 1.2 on page 2 of \cite{Martin-wrapping}.}:

    \begin{conjecture}{[Grigsby, MSRI 2010]}
        \label{con:catwrapcon}
        (Categorified Wrapping Number Conjecture)
        The annular Khovanov homology of an annular link $L$ is nontrivial in $k$-grading $\wrap(L)$.  
    \end{conjecture}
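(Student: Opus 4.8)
The plan is to isolate the $k=\wrap(L)$ piece of the annular Khovanov complex and to exhibit, for a diagram realizing the wrapping number, a single resolution carrying a nonzero homology class there. Fix $w=\wrap(L)$ and a \emph{taut} diagram $D$ for $L$: one equipped with an embedded arc $\mu$ from the puncture $*$ to infinity that meets $D$ transversely in exactly $w$ points and avoids all crossings (such $D$ exists essentially by definition of $\wrap$). Since resolving a crossing only changes $D$ near that crossing, every resolution $s$ of $D$ meets $\mu$ in exactly $w$ points; as each essential (core-parallel) circle of $s$ meets $\mu$ an odd number of times and each trivial circle an even number, $s$ has at most $w$ essential circles, with equality precisely when $\mu$ meets each of them once and no trivial circle. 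Hence the chain group in $k$-grading $w$ is spanned by generators supported on resolutions with exactly $w$ essential circles, all labeled $w_+$, the trivial circles labeled arbitrarily in $V=\langle v_+,v_-\rangle$; write $\widetilde{C}(D)$ for this subquotient complex.

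First I would pin down the induced differential on $\widetilde{C}(D)$ from the annular Frobenius structure. An essential--essential merge lands in a resolution with $w-1$ essential circles, which has no generators in $k$-grading $w$, so it contributes $0$; a trivial--trivial merge (resp.\ split) acts by the usual multiplication (resp.\ comultiplication) on $V=\bk[X]/(X^2)$; the merge of a trivial circle into an essential one deletes that circle if it is $v_+$ and kills the generator if it is $v_-$; the split of a trivial circle off an essential one buds off a new $v_-$-labeled trivial circle; and the remaining "split into two essentials" cannot occur, as it would produce $w+2$ essential circles. Thus $\widetilde{C}(D)$ is a Khovanov-type complex for the trivial circles of the resolutions, with the essential circles acting only as frozen sites where trivial circles may be absorbed or created.

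Next, the heart of the argument: I would call a resolution $s_0$ (with $w$ essential circles) \emph{sufficient} if the generator $x_0\in\widetilde{C}(D)$ with every essential circle labeled $w_+$ and every trivial circle labeled $v_-$ is a cycle whose class is nonzero. Being a cycle is a purely local condition: each merge out of $s_0$ contributes $0$ (because $v_-\cdot v_-=0$, because $v_-$ is killed upon absorption into an essential circle, and because essential--essential merges leave grading $w$), while a nonzero contribution to $dx_0$ arises exactly from a cube-edge out of $s_0$ that \emph{splits} a circle against a trivial one --- a trivial circle splitting into two trivial circles ($\Delta v_-=v_-\otimes v_-$) or an essential circle shedding a trivial circle ($w_+\mapsto v_-\otimes w_+$). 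So $x_0$ is a cycle precisely when, at every crossing of $D$ incident to a trivial circle of $s_0$ (or one that would appear upon flipping), $s_0$ takes the merging smoothing --- this is the promised characterization in terms of the crossing resolutions abutting trivial circles.

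Finally, the nonvanishing of $[x_0]$. When $s_0$ in addition receives no cube-edge that can supply $x_0$ as a boundary term --- as happens for the Seifert resolution of a braid closure $\widehat\beta$, $\beta\in B_n$, where all $n=w$ Seifert circles are essential, every disoriented smoothing is an essential--essential merge, and $s_0$ has no incoming edges at all --- one gets $[x_0]\neq 0$ at once, proving the conjecture for that link. In general, though, the edges entering the resolution of $x_0$ can contribute nonzero boundary terms, and the crux is to show these cannot sum to $x_0$; I expect \emph{this} to be the main obstacle and the point where additional hypotheses must enter, since a taut diagram need not possess a resolution free of such problematic incoming edges. For alternating annular links I would take a reduced alternating taut diagram, check that the oriented (or all-$A$/all-$B$) resolution is sufficient, and rule out cancellation using the analogue of Lee's spectral sequence, whose $E_\infty$-page is supported on oriented resolutions and therefore forces the class to survive; for tangle closures exhibiting plumbed phenomena I would instead glue sufficient resolutions of the pieces and deduce nonvanishing from a Künneth-type decomposition of the annular complex. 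The payoff is that the whole question is reduced to a checkable condition on one resolution of one taut diagram, which is exactly what makes these classes tractable.
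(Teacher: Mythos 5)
Your setup and your ``cycle'' analysis match the first half of the paper's machinery: isolating the $k=\wrap(L)$ part of the complex, and observing that a distinguished generator on a single resolution (all nontrivial circles labeled $+$) is a cycle exactly when no outgoing edge is a split against a trivial circle --- this is the paper's ``insulated/perfectly wrapped'' condition and the outgoing-cobordism case check of Lemma \ref{lma:kerclassif} (note you have swapped the paper's notation: here $\vv$ labels nontrivial circles and $\ww$ trivial ones). But the step you yourself flag as the crux --- ruling out that the class is a boundary --- is precisely where the paper's actual content lies, and your proposal does not supply it. The paper does not use the all-minus labeling you choose; instead it introduces \emph{uniformity} (Definition \ref{def:uniform}: every trivial circle abuts only $0$-resolutions or only $1$-resolutions) and labels type-$0$ circles $w_-$ but type-$1$ circles $w_+$. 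With that labeling the generator is simultaneously a cycle (Lemma \ref{lma:kerclassif}) and \emph{not the target of any single differential arrow} at $k$-grading $\wrap(L)$ (Lemma \ref{lma:imgclassif}), so nonvanishing follows from the elementary Lemma \ref{lma:localtoglobalakh}, with no spectral sequence or cancellation argument at all; Theorem \ref{thm:perfectlywrappeduniform} shows this is in fact an exact characterization. Your all-minus generator is generically hit by incoming $\wsplit$, $\mixsplit$, and $\veemerge$ arrows, so on its own it proves nothing beyond the braid-closure case you mention.

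The substitutes you propose for the hard step are gaps rather than alternatives. For alternating links, the claim that a Lee-type spectral sequence ``forces the class to survive'' at $k=\wrap(L)$ is unjustified: the Lee deformation does not respect the annular ($k$-)filtration in any simple way, the surviving Lee classes are not supported on a single resolution after the change of basis, and nothing places them in $k$-grading $\wrap(L)$. You also silently assume a diagram that is both taut and reduced alternating; the paper never needs $\wrap(D)=\wrap(L)$, because Proposition \ref{prop:exactlywrappedresexists} produces exactly wrapped resolutions of an arbitrary diagram. The paper's alternating argument is instead combinatorial: every resolution of an alternating diagram is ``almost uniform'' (Lemma \ref{lma:alternatinglinksalmostuniform}), and Theorem \ref{thm:expandingclass} gives an arc-recoloring algorithm converting a perfectly wrapped almost uniform resolution into a perfectly wrapped uniform one, after which Theorem \ref{thm:perfectlywrappeduniform} applies. (For the plumbed-type closures, the paper likewise exhibits explicit perfectly wrapped uniform resolutions rather than invoking a K\"unneth decomposition, which the annular complex does not obviously admit.) Finally, keep in mind the statement is a conjecture: the paper, too, proves it only for these classes, so the fair target is the single-resolution criterion of Theorem \ref{thm:perfectlywrappeduniform}, and the missing idea in your proposal is exactly the uniform labeling that makes the distinguished generator unreachable by every incoming arrow.
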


    We direct the reader to \cite{Kim-caltechsl2thesis} for a summary of present bounds and attempts to prove Conjecture \ref{con:catwrapcon}.
    Grigsby's conjecture is motivated by the following older conjecture, first posed in a seminal paper by Hoste and Przytycki in 1995: 

    \begin{conjecture}{\cite{HostePrz-skeinmoduleofwhiteheadmanifolds}}
        \label{con:wrappingcon}
        (Wrapping Number Conjecture)
        The Kauffman bracket skein module of an annular link $L$ is nonzero in annular degree $\wrap(L)$. 
    \end{conjecture}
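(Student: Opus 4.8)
The statement is the decategorified conjecture of Hoste--Przytycki, open in general, so a proof here means a strategy for particular families, leveraging the categorified results of the body of the paper. Recall that $\AKh$ categorifies $\KBSM(A)$: fixing the annular grading $k$, the graded Euler characteristic $\sum_{i,j}(-1)^i q^j \dim \AKh^{i,j,k}(L)$ is the coefficient of $z^k$ in the expansion of $[L] \in \KBSM(A) \cong \Z[q^{\pm 1}][z]$ in the basis $\{z^n\}_{n \geq 0}$. Since $z^n$ is supported in annular gradings between $-n$ and $n$ and reaches the top value $k = n$ in a single generator, and since $\AKh(L)$ vanishes for $k > \wrap(L)$ by \cite{GLW-sl2action}, the coefficient of $z^{\wrap(L)}$ in $[L]$ is, up to an overall unit, the graded Euler characteristic of $\AKh^{\bullet,\bullet,\wrap(L)}(L)$. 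Hence Conjecture~\ref{con:wrappingcon} for $L$ follows from Conjecture~\ref{con:catwrapcon} for $L$ \emph{together with} a no-cancellation input: that $\AKh^{\bullet,\bullet,\wrap(L)}(L)$ does not have vanishing Euler characteristic.

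For the families treated in the body --- alternating annular links and tangle closures exhibiting plumbed link phenomena --- I would first invoke the main construction: a resolution whose crossings abutting trivial circles are of the prescribed type already supports a nonzero class in $k$-grading $\wrap(L)$, with the generator it produces neither a boundary nor mapping nontrivially under the differential. The no-cancellation step is then handled case by case. For alternating annular links one can argue directly at the level of the Kauffman bracket, where adequacy of the alternating diagram prevents cancellation among the extremal Kauffman states --- the classical mechanism behind the non-vanishing of the extreme coefficients of the Jones polynomial of alternating links --- equivalently, via an annular thinness statement confining $\AKh^{\bullet,\bullet,\wrap(L)}(L)$ to a single $(i,j)$-diagonal, so that the knight-move pairing cannot cancel the extremal generator. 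In the plumbed cases the same conclusion is read off the local model of the distinguished resolution.

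The main obstacle is this no-cancellation step in full generality. For an arbitrary annular link the extremal $k$-grading part of $\AKh$ need not be thin, and a cancelling pair of generators sharing a $(q,k)$-bidegree is a genuine possibility, so the categorified conjecture does not formally imply the decategorified one outside controlled families. Removing the obstacle would require either a structural non-vanishing statement for the top $k$-grading --- for instance, using the $\mathfrak{sl}_2(\C)$-action of \cite{GLW-sl2action} to identify $\AKh^{\bullet,\bullet,\wrap(L)}(L)$ with a highest-weight space and showing that space has nonzero Euler characteristic --- or a direct skein-theoretic argument bypassing homology entirely, producing an explicit linear functional on $\KBSM(A)$ built from the trivial circles and the crossings abutting them that evaluates nontrivially on $[L]$ in annular degree $\wrap(L)$.
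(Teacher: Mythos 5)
The statement you were asked about is not proved in the paper at all: it is the Hoste--Przytycki Wrapping Number Conjecture, quoted purely as motivation for the categorified Conjecture~\ref{con:catwrapcon}, and it remains open. The paper proves only the categorified statement for certain families (via perfectly wrapped uniform resolutions) and explicitly says only that its main theorem ``could likely be upgraded'' to the skein-module statement. So there is no proof in the paper to compare yours against, and your proposal --- which you candidly present as a strategy rather than a proof --- does not establish the conjecture either.

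On the merits of the strategy: the decategorification step is correct. Since $[L]$ has $z$-degree at most $\wrap(L)$ and $z^n$ contributes to the top annular weight $k=n$ through a single generator, the coefficient of $z^{\wrap(L)}$ is, up to a unit, the graded Euler characteristic of $\AKh(L)$ at $k=\wrap(L)$; nonvanishing of that Euler characteristic therefore yields Conjecture~\ref{con:wrappingcon}. (Note that this input alone suffices; Conjecture~\ref{con:catwrapcon} is then a consequence rather than an independent hypothesis --- the implication between the two conjectures runs only from the decategorified to the categorified one.) The genuine gap is exactly where you locate it, namely possible cancellation in the Euler characteristic, but your proposed ways of closing it for the paper's families are unsubstantiated. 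The perfectly wrapped uniform resolution machinery produces one distinguished free summand of $\AKh(L;\wrap(L))$ and gives no control over other generators in that $k$-grading, hence no control over the Euler characteristic; no annular thinness statement for the top $k$-grading of alternating annular links is proved in the paper (its alternating result, Corollary~\ref{cor:alternatinglinkssatisfycwnc}, only gives nontriviality of homology); and reduced alternating annular diagrams are not shown to be adequately wrapped in Hoste--Przytycki's sense, so the classical adequacy mechanism cannot simply be invoked. As it stands, your outline restates the known reduction and the known obstruction without removing it, even for the families treated in the paper.
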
   

    In support of their conjecture, Hoste--Przytycki proved that Conjecture \ref{con:wrappingcon} holds for \emph{adequately wrapped} annular links. 
    An adequately wrapped annular link enjoys diagrams with particular resolutions, ones which guarantee that the link satisfies Conjecture \ref{con:wrappingcon}. 

    In this paper, we generalize Hoste--Przytycki's results in the categorified setting by identifying a broader class of resolutions which may be associated to a nonvanishing annular Khovanov homology class.
    We refer to these resolutions as \textit{perfectly wrapped} and \textit{uniform}. 
    The property of being perfectly wrapped limits the type of cobordisms entering and exiting the resolution.
    Uniformity is a condition on trivial circles; it requires each trivial circle to abut only $0$-resolutions of crossings, or only $1$-resolutions of crossings.

    An abridged version of our main theorem, Theorem \ref{thm:perfectlywrappeduniform}, is the following:

    \begin{theorem}
        \label{thm:mainthm}
        Let $L$ be an annular link with a diagram $D$ admitting a perfectly wrapped uniform resolution $D_u$. 
        Then $\AKh(L)$ is nontrivial in $k$-grading $\wrap(L)$. 
    \end{theorem}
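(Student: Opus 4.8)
The plan is to produce an explicit cycle in the annular Khovanov complex, supported entirely in $k$-grading $\wrap(L)$, and to show it is not a boundary. The starting point is the resolution $D_u$, which by hypothesis is perfectly wrapped and uniform. Recall that the $k$-grading of a labelled resolution is additive over circles, where each trivial (contractible) circle contributes $0$ and each nontrivial (essential) circle contributes $\pm 1$ according to its orientation and label. Since $\wrap(L)$ equals the number of essential circles in $D_u$ (this is forced by the perfectly-wrapped hypothesis, which arranges the essential circles to be ``coherently stacked'' and hence realizes the geometric minimum), the generator $x_0$ of $D_u$ obtained by assigning to every essential circle the label maximizing the $k$-grading and assigning to every trivial circle an arbitrary label lies in $k$-grading exactly $\wrap(L)$, which is the top of the support.

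The first step is to pin down the differential on and around $x_0$ in the associated graded complex, i.e.\ the $k$-grading-preserving part of the Khovanov differential. The uniformity hypothesis is what controls this: because each trivial circle abuts only $0$-resolutions or only $1$-resolutions of crossings, the merge/split cobordisms incident to a trivial circle are of a restricted type, and the perfectly-wrapped hypothesis restricts the cobordisms incident to the essential circles. I would carefully enumerate, for each crossing of $D$ not already resolved in $D_u$ (if any) and for each crossing whose resolution in $D_u$ can be changed, the effect of the corresponding edge map on $x_0$, and show that with the extremal labelling on essential circles, the component of $dx_0$ in $k$-grading $\wrap(L)$ vanishes. This uses the standard fact that a merge or split involving two essential circles, or one essential and one trivial circle, either drops the $k$-grading or is killed by the extremal label (the label that a $\wedge$-type annihilation would require is absent). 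Thus $x_0$ is a cycle for the associated graded differential.

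The second, harder step is nonvanishing: showing $[x_0] \neq 0$ in $\AKh(L)$, i.e.\ that $x_0$ is not hit by the associated graded differential from $k$-grading $\wrap(L)$. Here I expect the main obstacle to lie, and the perfectly-wrapped hypothesis should do the heavy lifting. The strategy is to identify a ``dual'' functional on the top $k$-graded piece of the complex that detects $x_0$ and vanishes on the image of the differential. Concretely, one restricts attention to the subcomplex (or quotient complex) of resolutions having the maximal number of essential circles; perfectly-wrappedness should force this to be a very small, explicitly understood complex — morally a tensor product of acyclic ``essential-circle'' factors with a single surviving ``trivial-circle'' Frobenius-algebra factor — so that its homology in the relevant grading is a free module of rank at least one, generated by (the class of) $x_0$. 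In Hoste--Przytycki's adequately-wrapped case this is essentially the observation that the leading term of the Kauffman bracket in the annular variable cannot cancel; the categorified analogue is that the extremal edge maps out of the top-weight resolutions are injective (for splits) or zero (for merges) on the extremal-labelled generators, so no cancellation occurs.

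To assemble the proof: (i) verify $D_u$ realizes $\wrap(L)$ essential circles and fix the extremal generator $x_0$ in $k = \wrap(L)$; (ii) using uniformity, check every outgoing associated-graded edge map kills $x_0$ in grading $\wrap(L)$, so $x_0$ is a cycle; (iii) using perfect-wrappedness, localize to the complex generated by resolutions with the maximal number of essential circles and show, by the restricted-cobordism analysis, that $x_0$ is not a boundary there, hence not a boundary in $\AKh(L)$; (iv) conclude $\AKh(L)$ is nontrivial in $k$-grading $\wrap(L)$. The routine parts are the grading bookkeeping in (i) and the case-by-case edge-map computation in (ii); the conceptual crux is the structural reduction in (iii), which is exactly where the two new hypotheses, \emph{perfectly wrapped} and \emph{uniform}, are designed to apply, and which generalizes the adequacy argument of Hoste--Przytycki. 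I would also note that the full Theorem~\ref{thm:perfectlywrappeduniform} presumably allows the resolution to be only ``partial'' (some crossings unresolved), in which case step (ii) additionally needs the edge maps from changing those crossings to be handled, but the same extremal-label mechanism applies.
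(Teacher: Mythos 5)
There is a genuine gap, and it sits exactly where your proof needs to be most careful: the choice of labels on the trivial circles. You propose taking $x_0$ with every essential circle labelled $v_+$ and ``every trivial circle an arbitrary label,'' and you claim the extremal label on the essential circles kills all outgoing $k$-preserving edge maps. That is false: uniformity controls \emph{which} cobordisms touch each trivial circle, but the label on the trivial circle itself is what decides whether the map vanishes. For instance, if a type $0$ trivial circle is labelled $w_+$, then an outgoing merge $\mixmerge$ sends $v_+\otimes w_+\mapsto v_+$, and an outgoing split $\veesplit$ sends $w_+\mapsto v_-\otimes v_+ + v_+\otimes v_-$; both are nonzero in $k$-grading $\wrap(L)$, so your $x_0$ is not even a cycle. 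Dually, if a type $1$ trivial circle is labelled $w_-$, then $x_0$ is the target of an arrow coming from $\wsplit$, $\mixsplit$, or $\veemerge$, and the non-boundary argument collapses. The correct (and essentially forced) choice, which is the content of Lemmas \ref{lma:kerclassif} and \ref{lma:imgclassif}, is: label every type $0$ (and not type $1$) trivial circle $w_-$, every type $1$ (and not type $0$) trivial circle $w_+$, and only the split unknots (both type $0$ and type $1$) arbitrarily. Uniformity is precisely what guarantees such a consistent assignment exists; with arbitrary labels the hypothesis does no work.

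Your step (iii) is also not the right mechanism, and as stated it is unjustified: restricting to ``resolutions with the maximal number of essential circles'' does not obviously compute anything about $\AKh(L;\wrap(L))$, since when $\wrap(D)>\wrap(L)$ resolutions with more than $\wrap(L)$ essential circles also contribute generators in $k$-grading $\wrap(L)$, and you give no argument that non-boundary in the subquotient implies non-boundary in the full associated graded complex. The paper's argument is far more elementary and avoids any localization: with the labels above, the insulated (perfectly wrapped) hypothesis restricts the incoming cobordisms to $\wsplit$, $\mixsplit$, $\veesplit$, $\veemerge$, and one checks case by case that $x$ is not the target of a single arrow in $k$-grading $\wrap(L)$; since every arrow has coefficient $\pm1$, the coefficient of $x$ in any element of the image of $d$ is zero, so $(0,\dots,0,x,0,\dots,0)$ is a cycle that is not a boundary (Lemma \ref{lma:localtoglobalakh}). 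Your ``dual functional'' idea is morally this coefficient functional, but it only works after the label correction, and it needs the arrow-by-arrow check, not a structural reduction to a tensor product of small complexes. (Also, resolutions here are complete resolutions; there is no ``partial resolution'' case to handle.)
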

    
    In particular, every perfectly wrapped uniform resolution $D_u$ produces at least one distinguished generator $x \in \CAKh(D_u)$ such that 
        \[(0, ..., 0, x, 0, ..., 0)\]
    generates a nonzero annular Khovanov homology class in $\wrap(L)$ $k$-grading.

    Theorem \ref{thm:mainthm} follows from the data of a single resolution and the cobordisms entering and exiting it. 
    Namely, we look at the distinguished generators which are always sent to zero by $\wrap(L)$ $k$-graded differential components, and the distinguished generators which are never mapped to by $\wrap(L)$ $k$-graded differential components.

    Theorem \ref{thm:mainthm} encompasses Hoste--Przytycki's adequately wrapped links, along with a number of other annular link classes, chief among them alternating annular links. 

    The proof for alternating annular links requires a transformation between another type of resolution and perfectly wrapped uniform resolutions.
    This former type is called \textit{almost uniform} as opposed to uniform. 
    It has an alternate condition on its trivial circles: Each trivial circle in an almost uniform resolution abuts a singular kind of crossing resolution on its exterior, and the opposite kind on its interior.
    
    The definition is motivated entirely by Lemma \ref{lma:alternatinglinksalmostuniform}, which gives a perfectly wrapped almost uniform resolution for every alternating diagram for an annular link. 
    The following theorem and corollary, adapted from Theorem \ref{thm:expandingclass} and Corollary \ref{cor:alternatinglinkssatisfycwnc}, completes the proof. 
    They are proven entirely in the setting of resolutions.

    \begin{theorem}
        \label{thm:thmforalternating}
        Let $D$ be a diagram for an annular link $L$, with no nugatory crossings (understood in the annular setting).
        Suppose that $D$ admits a perfectly wrapped almost uniform resolution $D_u$.
        Then $D$ admits a perfectly wrapped uniform resolution $D_v$. 
    \end{theorem}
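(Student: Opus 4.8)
The plan is to produce $D_v$ from $D_u$ by performing a controlled sequence of local modifications at the trivial circles that witness the failure of uniformity. Recall that an almost uniform resolution differs from a uniform one precisely in its trivial circles: in $D_u$, each trivial circle abuts one distinguished type of crossing resolution on its exterior and the opposite type on its interior, rather than the same type on both sides. So I would first set up the combinatorics carefully: fix $D_u$, enumerate its trivial circles, and for each one record which crossings of $D$ abut it and from which side. Since $D$ is alternating (this is where the hypothesis enters, even though the statement is phrased more generally) and has no nugatory crossings in the annular sense, the checkerboard coloring and the standard alternating-to-resolution dictionary should force a rigid local picture at each trivial circle; the goal is to show that changing the resolution of a suitable subset of the abutting crossings converts ``opposite types on the two sides'' into ``same type on both sides,'' i.e.\ turns almost uniform into uniform, while (a) keeping the resolution perfectly wrapped and (b) not destroying uniformity/almost-uniformity at the \emph{other} trivial circles.

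The key steps, in order: (1) Prove a local lemma: given a single trivial circle $c$ in a perfectly wrapped almost uniform resolution, flipping the resolutions of all crossings abutting $c$ on its interior (say) yields a resolution in which $c$ now abuts the same crossing type on both sides, hence is uniform at $c$; here I must check that the ``perfectly wrapped'' condition — which constrains the cobordisms entering and exiting the resolution — is preserved by this flip, using that $c$ is trivial so the flip only rearranges merges/splits locally around the $*$-free disk bounded by $c$. (2) Check interaction: a crossing abutting $c$ on its interior might also abut some other circle $c'$ on one of its sides; I need to argue that either $c'$ is an essential circle (in which case the ``uniform'' condition places no constraint on it) or that the combined flips across all trivial circles can be chosen consistently — this is a $2$-coloring / consistency argument on the ``adjacency graph'' of trivial circles and shared crossings. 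Here the no-nugatory-crossings hypothesis should rule out the pathological self-adjacencies that would obstruct consistency. (3) Iterate or solve the consistency system globally: show the flips can be performed simultaneously (or in a terminating sequence) to reach a resolution $D_v$ that is perfectly wrapped and uniform at every trivial circle. (4) Finally confirm $D_v$ is still a resolution of the \emph{same} diagram $D$ in the \emph{same} $k$-grading $\wrap(L)$ — flipping a crossing's resolution changes the homological grading but not $k$, and since we only flip crossings abutting trivial circles, the $k$-grading of the resulting resolution is unchanged; this is the point where I'd want to recompute $k$ from the state-circle data and verify it lands back at $\wrap(L)$.

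The main obstacle I anticipate is step (2)–(3): the consistency of the local flips. A single crossing can be shared between two trivial circles, and flipping it to fix uniformity at one circle might spoil it at the other; resolving this requires showing the ``conflict graph'' is bipartite (or that conflicts can't form an odd cycle), which is exactly where the alternating condition and the absence of annular nugatory crossings must do real work. I expect the cleanest route is to phrase the whole thing as finding a consistent assignment and then invoke a parity/coloring argument tailored to alternating diagrams — possibly by passing to the checkerboard graph of $D$ and observing that trivial circles correspond to a recognizable subclass of faces whose incidences have the needed bipartite structure. A secondary, more technical obstacle is bookkeeping the ``perfectly wrapped'' condition through each flip: since that condition is about the cobordism maps into and out of the resolution (not just the resolution itself), I'll need the local lemma in step (1) to be genuinely local and to verify that the merge/split pattern around each trivial disk is exactly the one that perfect-wrapping demands, before and after the flip.
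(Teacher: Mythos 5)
There are genuine gaps here, and the most important one is exactly the step you flag as the main obstacle. Your consistency argument in steps (2)--(3) is never actually carried out, and the tool you propose for it---the checkerboard/bipartite structure of an alternating diagram---is not available: the theorem does not assume $D$ is alternating. Its hypotheses are only that $D$ has no (removable) nugatory crossings and admits a perfectly wrapped almost uniform resolution; alternating diagrams enter only afterwards, via Lemma \ref{lma:alternatinglinksalmostuniform}, as a source of such resolutions. The paper's proof (Theorem \ref{thm:expandingclass}) avoids the consistency problem altogether by a different local move: for each trivial circle $W$ of depth $0$ (i.e.\ not contained in any other trivial circle), it recolors \emph{all} arcs interior to $W$---not just the $W$-adjacent ones---to match the color of the $W$-adjacent-exterior arcs, thereby replacing $W$ together with every trivial circle inside it by a collection of trivial circles of a single type. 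Since exterior arcs are never touched and the interiors of distinct depth-$0$ circles are disjoint, moves at different circles cannot conflict, so no $2$-coloring of a conflict graph is needed. Your version, which flips only the crossings abutting $c$ on its interior, is surgery on $c$ itself: it destroys $c$ as a circle, can leave deeper trivial circles with mixed abutting types, and can break almost uniformity, so the iteration you describe is not under control.

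The second gap is preservation of the insulated condition, which is the technical heart of the paper's argument and which you only promise to ``bookkeep.'' After recoloring one must rule out an arc in the modified region abutting a single trivial circle, since switching such an arc would give a cobordism of the forbidden shape (e.g.\ a split $\wsplit$ or $\mixsplit$ out of the new resolution). The paper does this by a clockwise case analysis on the colors of the arcs around the offending circle, deriving contradictions from almost uniformity of $D_u$; and it is precisely (and only) in the degenerate case of this analysis that the no-removable-nugatory-crossing hypothesis is used, since there the arc is forced to be the resolution of a removable nugatory crossing. Your remark that nugatory crossings should rule out ``pathological self-adjacencies'' points at the right phenomenon, but in your step (1) the preservation of perfect wrapping is asserted rather than proven. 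On the other hand, your step (4) is a non-issue once one observes, as the paper does, that flips of arcs interior to a trivial circle only induce $\wmerge$ or $\wsplit$ cobordisms, so the number of nontrivial circles, and hence exact wrapping at $k$-grading $\wrap(L)$, is automatically unchanged.
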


    \begin{corollary}
        \label{cor:alternatinglinkssatisfycatwrapcon}
        The Categorified Wrapping Number Conjecture holds for every alternating annular link.
    \end{corollary}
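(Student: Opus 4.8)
The plan is to chain together the three inputs above. Let $L$ be an alternating annular link and fix an alternating annular diagram $D$ for $L$, that is, a diagram missing the marked point $*$. First I would reduce to the case that $D$ has no nugatory crossings in the annular sense. If $c$ is such a crossing, then by definition the monogon it bounds misses $*$, so the Reidemeister~I move undoing $c$ is supported in a disk away from $*$ and yields a new annular diagram $D'$ still representing $L$; alternatingness survives this move, and the crossing number strictly drops, so iteration terminates at an alternating annular diagram $D''$ for $L$ with no annular-nugatory crossings. If this process leaves $D''$ crossingless, then $L$ is a split union of trivial and contractible circles; in this degenerate case $\AKh(L)$ is computed directly and is nonzero in its top $k$-grading, which equals the number of trivial circles and hence $\wrap(L)$, and we are done. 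Otherwise, proceed.

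Next I would run the machinery on $D''$. By Lemma~\ref{lma:alternatinglinksalmostuniform}, the alternating annular diagram $D''$ admits a perfectly wrapped almost uniform resolution. Since $D''$ has no nugatory crossings in the annular sense, Theorem~\ref{thm:thmforalternating} upgrades this to a perfectly wrapped uniform resolution $D_v$ of $D''$. Applying Theorem~\ref{thm:mainthm} (equivalently Theorem~\ref{thm:perfectlywrappeduniform}) to the diagram $D''$ together with $D_v$, we conclude that $\AKh(L)$ is nontrivial in $k$-grading $\wrap(L)$. This is exactly the statement of Conjecture~\ref{con:catwrapcon} for $L$, and since $L$ was an arbitrary alternating annular link, the corollary follows.

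I expect the only real obstacle to be the first step; the rest is formal once the inputs are in place. The delicate points there are (i) fixing the correct annular notion of ``nugatory crossing'' so that the associated Reidemeister~I reduction is genuinely supported away from $*$---the monogon cut off by such a crossing may or may not contain $*$, and one wants the definition chosen so that only the $*$-free monogons are removed---and (ii) verifying that this reduction preserves alternatingness of the annular diagram in all cases. Both are standard in flavor but deserve a careful statement, since the entire chain of implications rests on $D''$ meeting the hypotheses of Theorem~\ref{thm:thmforalternating} precisely.
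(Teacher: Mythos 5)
Your chain of results is the same as the paper's: pass to an alternating annular diagram with no (removable) nugatory crossings, produce a perfectly wrapped almost uniform resolution, upgrade it via Theorem \ref{thm:expandingclass}, and conclude with Theorem \ref{thm:perfectlywrappeduniform}. The place where your write-up has a genuine problem is the reduction step, which you identify as the delicate one but then sketch incorrectly. A nugatory crossing is a crossing through which a simple closed curve meets the diagram only at that point; it bounds a monogon only in the special case of a kink. In general an entire tangle sits on the $*$-free side of that curve, and eliminating the crossing is not a single Reidemeister~I move: one rotates the $*$-free tangle by $\pi$, an isotopy supported in the solid torus (this is exactly what Figure \ref{fig:nugatorycrossing} depicts for \emph{removable} nugatory crossings). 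Preservation of alternatingness is then not automatic and needs the classical argument: the flip reverses every crossing of the rotated tangle, and that reversal is precisely what restores the over/under alternation across the erased crossing. The paper sidesteps these details by quoting the classical $S^3$ fact that alternating links admit reduced alternating diagrams; if you want a hands-on reduction you must argue as above, and you should also note that only \emph{removable} nugatory crossings can or need to be eliminated --- essential ones may remain, and Theorem \ref{thm:expandingclass} only hypothesizes the absence of removable ones.

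Two smaller points. You cite Lemma \ref{lma:alternatinglinksalmostuniform} as producing a perfectly wrapped almost uniform resolution, but that lemma only says that every resolution of an alternating diagram is almost uniform; the existence of a perfectly wrapped resolution is Lemma \ref{lma:existenceofperfectlywrappedres}, and you need both (this is how the paper argues). In your crossingless degenerate case, the top nonzero $k$-grading is the number of \emph{nontrivial} (essential) circles, not trivial ones in the paper's terminology; in any event that case need not be split off, since a crossingless diagram's unique resolution is vacuously perfectly wrapped and uniform, so the general machinery already covers it.
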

    
    Beyond alternating annular links, we examine phenomena associated with plumbed links. 
    The following takes from Corollaries \ref{cor:braidsandchains} and \ref{cor:circlecable}:

    \begin{corollary}
        \label{cor:buildingclasseswithpwur}
        The annular closures of the following classes of tangles satisfy the conditions of Theorem \ref{thm:mainthm}, and therefore Conjecture \ref{con:catwrapcon}: 

        \begin{enumerate}
            \item A braid (drawn vertically) with some crossings replaced by horizontal 2-braids;
            in other words, the result of surgering a braid along some twisted horizontal bands. See Figure \ref{fig:braid-chains} for an example.
            \item A composition of braids and identity braids wearing belts. See Figure \ref{fig:plumby} for an example.
        \end{enumerate}
        
    \end{corollary}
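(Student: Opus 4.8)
The plan is to reduce the corollary to Theorem~\ref{thm:mainthm} by exhibiting, for each annular link $L$ in the two families, a diagram $D$ for $L$ together with a perfectly wrapped uniform resolution $D_u$; in the cases where the most natural resolution turns out to be only perfectly wrapped and \emph{almost} uniform, Theorem~\ref{thm:thmforalternating} upgrades it, provided $D$ has no nugatory crossings (the standard diagrams below can be arranged to have none). So the work is entirely in producing the resolution and carrying out a local check on the cobordisms abutting it.

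For class~(1), start from the standard diagram $D$: a vertical braid $\beta$ on $n$ strands, annularly closed around $*$, with a prescribed set of crossings replaced by horizontal $2$-braids $\tau_1,\dots,\tau_r$ (the twisted bands). Build $D_u$ by taking the oriented (Seifert) smoothing at every crossing of $\beta$ and, at the crossings of each $\tau_j$, the smoothing for which $\tau_j$ resolves to a chain of trivial circles (there are $m_j-1$ of them if $\tau_j$ has $m_j$ crossings) together with a turnback on the through-arcs. The braid part of this resolution consists of nested essential circles about $*$; each chain circle is flanked only by crossings of the corresponding $\tau_j$, all carrying the same smoothing, so every trivial circle of $D_u$ abuts a single kind of crossing resolution and $D_u$ is uniform. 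One then checks that $D_u$ sits in $k$-grading $\pm\wrap(L)$ --- i.e.\ that the nested essential circles remaining after the turnbacks number exactly $\wrap(L)$ --- and that $D_u$ is perfectly wrapped: every cobordism entering or leaving $D_u$ changes a single crossing, hence is either a merge or split among the nested essential circles (from a crossing of $\beta$) or a saddle between two consecutive circles of a chain, or between a chain-end circle and the essential circle containing the adjacent turnback arc (from a crossing of some $\tau_j$); each such cobordism is of the type permitted by the definition of perfectly wrapped. Figure~\ref{fig:braid-chains} is the guiding example; this is Corollary~\ref{cor:braidsandchains}.

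For class~(2) the argument runs in parallel. The standard diagram $D$ is a vertical composition of braids $\beta_1,\dots,\beta_s$ interleaved with identity braids carrying belts $C_1,\dots,C_t$ (unknotted circles encircling prescribed subsets of strands). Resolve every crossing of each $\beta_\ell$ by the oriented smoothing, and resolve the $2k_i$ crossings between a belt $C_i$ and the $k_i$ strands it encircles so that $C_i$ remains a separate round circle. The braid crossings again produce nested essential circles about $*$; each belt $C_i$ becomes a trivial circle whose exterior crossings all carry one smoothing and whose interior crossings all carry the other, so the resulting resolution is perfectly wrapped and almost uniform but, in general, not uniform. Applying Theorem~\ref{thm:thmforalternating} (the standard diagram has no nugatory crossings) produces a perfectly wrapped uniform resolution, and Theorem~\ref{thm:mainthm} applies. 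Figure~\ref{fig:plumby} is the guiding example; this is Corollary~\ref{cor:circlecable}.

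The principal obstacle is the verification that the resolutions above are perfectly wrapped, since this is a condition on the cobordisms abutting the resolution and hence on the global structure of $D$: one must rule out any cobordism that could absorb or split off an essential circle in a way obstructing the nonvanishing argument behind Theorem~\ref{thm:mainthm}. The geometric fact that makes this tractable is that every new crossing (from a band $\tau_j$ or a belt $C_i$) is localized away from $*$ and abuts only trivial circles, so the only cobordisms interacting with the essential circles are the merges and splits coming from the braids themselves, exactly as in the unmodified braid closure. A secondary point, to be checked within each family, is that the nested essential circles of $D_u$ number exactly $\wrap(L)$ --- equivalently, that no essential circle can be removed by an isotopy compatible with the construction --- and that the uniformity (resp.\ almost-uniformity) labels forced on distinct trivial circles, for instance on a loop running between two stacked bands, never conflict.
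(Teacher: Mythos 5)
Your treatment of class (1) is essentially the paper's own argument (Corollary \ref{cor:braidsandchains}): resolve the braid part braidlike and each horizontal $2$-braid ``chainlike,'' note that the $m_j-1$ resulting trivial circles abut only crossings of the chain, all carrying the same $0$/$1$ label, and check insulation at the end crossings of each chain. Like the paper, you leave the verification that the number of essential circles equals $\wrap(L)$ (exact wrapping) at the level of an assertion, so on this part you are at the same level of rigor as the published proof.

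For class (2), however, there is a genuine gap: the resolution you propose does not exist. At each of the $2k_i$ crossings between a belt $C_i$ and a strand it encircles, either smoothing joins an end of a belt arc to an end of a strand arc (smoothings connect adjacent ends), so no component of any complete resolution consists solely of belt arcs; the belt can never ``remain a separate round circle.'' Nor can this be repaired by finding some other trivial circle playing the belt's role: the circles of a resolution are disjoint and a trivial circle bounds a disk not containing the $*$, so the essential circles coming from the strands cannot pass through it, and hence no trivial circle of any resolution has the ``interior crossings of one type, exterior of the other'' pattern you need for almost uniformity. Consequently the appeal to Theorem \ref{thm:expandingclass} has nothing to act on. The paper's proof (Corollary \ref{cor:circlecable}) takes a different and simpler route: resolve all crossings of each belt tangle $T_{k_j}$ with the same label, all $0$ or all $1$ (Figures \ref{fig:circlecable0} and \ref{fig:circlecable1}); the belt then merges with the strands into trivial circles that are genuinely type $0$ or type $1$, so the resolution is uniform outright, no upgrade from almost uniformity is needed, and the binary choice per belt is what yields the $2^m$ perfectly wrapped uniform resolutions. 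You would need to replace your belt resolution with this (or an equivalent) construction for part (2) to go through.
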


    \begin{figure}
        \centering
        \begin{subfigure}[t]{0.5\textwidth}
            \centering
            \includegraphics[width=0.3\linewidth]{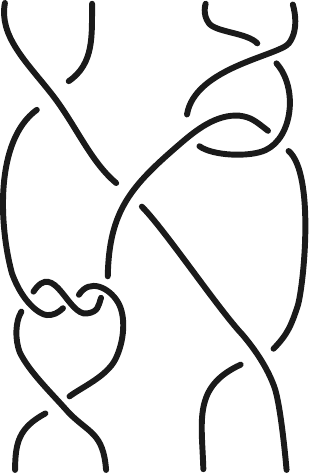}
            \caption{A 4-braid with inserted horizontal 2-braids (`chains') inserted in two locations.}
            \label{fig:braid-chains}
        \end{subfigure}%
        ~
        \begin{subfigure}[t]{0.5\textwidth}
            \centering
            \includegraphics[width=0.3\linewidth]{./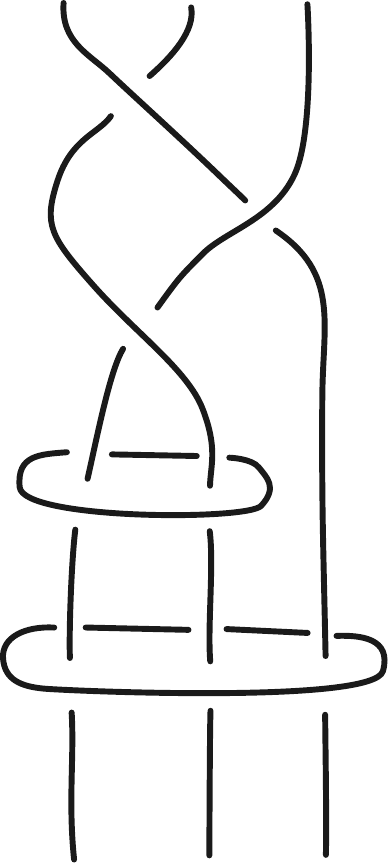}
            \caption{A braid wearing unknotted belts.}
            \label{fig:plumby}
        \end{subfigure}
        \caption{Annular closures of these tangles are examples of the two classes of annular links from Corollary \ref{cor:buildingclasseswithpwur}.}
        \label{fig:buildingclasseswithpwur}
    \end{figure}

    The proof of Corollary \ref{cor:buildingclasseswithpwur} relies on work of Grigsby--Ni, who showed that $\AKh$ over $\C$ coefficients is 1-dimensional at $k$-grading $\wrap(L)$ for braid closures \cite{GrElYi-braiddetection}.
    We note the similarities between these types and work of Martin; she found additional classes of adequately wrapped annular links, proving both Conjecture \ref{con:catwrapcon} and Conjecture \ref{con:wrappingcon} for these classes \cite{Martin-wrapping}. 
    
    Hoste--Przytycki used the fact that the blackboard-framed $n$-cabling of an adequately wrapped diagram for an annular link is also adequately wrapped \cite{HostePrz-skeinmoduleofwhiteheadmanifolds}. 
    We obtain an analogous result, with an additional observation about adding unlink components to diagrams.
    The following is adapted from Corollaries \ref{cor:cablingop} and \ref{cor:loopinsertion}.

    \begin{corollary}
        \label{cor:linkopsobtainingmorepwur}
        Suppose that $D$ is a diagram for an annular link $L$ that admits a perfectly wrapped uniform resolution. Then

        \begin{enumerate}
            \item The blackboard-framed $n$-cabling of $D$ admits a perfectly wrapped uniform resolution (see Figure \ref{fig:square-blackboard-cable} for an example), and
            \item The addition of an arbitrary number of disjoint loops (`earrings'), each wrapping around a single strand, to $D$ produces a diagram with a perfectly wrapped uniform resolution (see Figure \ref{fig:trefoil-earrings} for an example).  
        \end{enumerate}
        
    \end{corollary}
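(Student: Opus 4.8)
The plan is to prove each item by producing, from $D_u$, an explicit perfectly wrapped uniform resolution of the modified diagram, and then invoking Theorem~\ref{thm:mainthm}. In both cases the new resolution is obtained by resolving the newly introduced crossings in a prescribed way and leaving the resolution $D_u$ otherwise intact, so that the circle system changes only locally.

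For (1), I would take $D_u^{(n)}$ to be the \emph{coherent $n$-parallel resolution}: at each crossing $x$ of $D$, the $n \times n$ grid of crossings appearing in the blackboard $n$-cable is resolved by giving \emph{every} grid crossing the smoothing index that $D_u$ assigns to $x$. The point is that cabling commutes with taking an all-$i$ resolution: the result is, up to planar isotopy, the blackboard $n$-parallel of the circle system $D_u$, with no closed components created at the cabled crossings. Thus each trivial circle $c$ of $D_u$ is replaced by $n$ nested trivial circles and each essential circle by $n$ nested essential circles, and copy $j$ of a trivial circle $c$ abuts exactly the grid crossings sitting over the crossings $c$ abutted in $D_u$, all of which carry the common smoothing index of $c$. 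Hence every trivial circle of $D_u^{(n)}$ is uniform. Since blackboard $n$-cabling multiplies the wrapping number by $n$, the essential circle count $n \cdot \wrap(L) = \wrap(L^{(n)})$ places $D_u^{(n)}$ in $k$-grading $\wrap(L^{(n)})$; and the perfect-wrapping condition, being a restriction on the type of merge or split induced at each crossing abutting a trivial circle, is inherited because changing a grid crossing over $x$ induces the same type of move, on the appropriate parallel copies, as changing $x$ itself. Theorem~\ref{thm:mainthm} then applies.

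For (2), consider one earring $e$, a loop meridional to a strand lying on a circle $c'$ of $D_u$; it meets that strand in exactly two crossings $p$ and $q$, necessarily of the same sign. I would resolve both $p$ and $q$ with one and the same smoothing index $i \in \{0,1\}$. A direct inspection of the four local resolutions of $\{p,q\}$ shows that the two ``equal-index'' choices split off a small trivial circle $\gamma_e$ whose only abutting crossings are $p$ and $q$ — both now of type $i$, so $\gamma_e$ is uniform — while the circle $\tilde c'$ that replaces $c'$ acquires $p$ and $q$ as additional abutting crossings, again both of type $i$. Choosing $i$ equal to the common smoothing index of the crossings already abutting $c'$ when $c'$ is a uniform trivial circle (and arbitrarily when $c'$ is essential) keeps $\tilde c'$ uniform, and all other circles and crossings of $D_u$ are untouched. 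Because each $\gamma_e$ is a split unknot and $\tilde c'$ is essential exactly when $c'$ was, the essential circles — hence the $k$-grading — agree with those of $D_u$, which equals $\wrap$ of the augmented link since adjoining meridional loops leaves the wrapping number unchanged. Finally the only new trivial-circle merges and splits are self-merges and self-splits of the circles $\gamma_e$, which are permitted, so the perfect-wrapping condition persists and Theorem~\ref{thm:mainthm} again applies.

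The main obstacle lies in the two local verifications. For (1) it is the claim that the coherent $n$-parallel smoothing of a cabled crossing produces no stray (possibly non-uniform) trivial components and faithfully replicates the smoothing type on each of the $n$ parallel copies; for (2) it is the local analysis of a meridional earring establishing that an equal-index resolution of its two crossings splits off a uniform trivial circle while leaving the uniformity type of $c'$ free to be preserved. The supporting bookkeeping — the identity $\wrap(L^{(n)}) = n\,\wrap(L)$, the invariance of $\wrap$ under adjoining meridional loops, and the locality of the perfect-wrapping condition — I expect to be routine.
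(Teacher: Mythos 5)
Your proposal takes essentially the same route as the paper's proofs of Corollaries \ref{cor:cablingop} and \ref{cor:loopinsertion}: for the cable, resolve every crossing of the $n\times n$ grid over a crossing $x$ exactly as $D_u$ resolves $x$ (so the resolution is the blackboard $n$-parallel of $D_u$), and for each earring choose the equal-index resolution whose small trivial circle is compatible in type with the circle carrying the pierced strand; exact wrapping, uniformity, and insulation are then checked just as you outline. One caveat on your insulation step for (1): it is not quite true that changing a grid crossing over $x$ induces ``the same type of move'' as changing $x$ itself --- when the surgery arc of a grid crossing joins two \emph{different} parallel copies of the same circle of $D_u$, the induced cobordism is instead a merge of those adjacent copies (equivalently, read into $D_v^n$, a split), a case the paper's proof notes explicitly; since merges are allowed as outgoing cobordisms and splits as incoming ones, insulation still follows, but this extra case does need to be recorded. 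Similarly, in (2) the new cobordisms at the earring crossings are merges of $\gamma_e$ with $\tilde c'$ (or, in the incoming direction, the corresponding splits), not ``self-merges'' of $\gamma_e$; with both new arcs given the common index $i$, these occur only in the allowed directions, so your conclusion stands.
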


    \begin{figure}
        \centering
        \begin{subfigure}[t]{0.5\textwidth}
            \centering
            \labellist
                \pinlabel {*} at 150 90
            \endlabellist
            \includegraphics[width=0.8\linewidth]{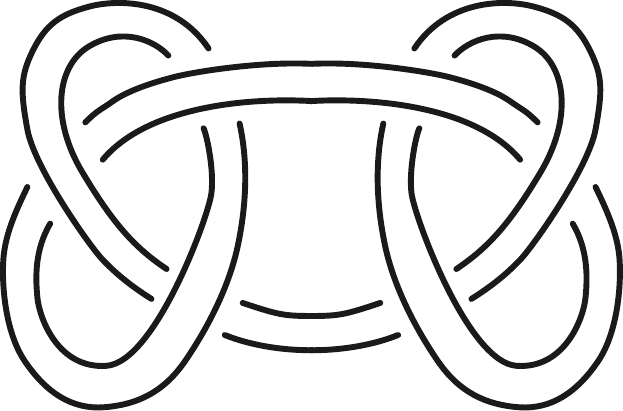}
            \caption{A blackboard-framed 2-cabling of a square knot in the thickened annulus.}
            \label{fig:square-blackboard-cable}
        \end{subfigure}
        ~
        \begin{subfigure}[t]{0.5\textwidth}
            \centering
            \labellist
                \pinlabel {*} at 80 90
            \endlabellist
            \includegraphics[width=0.6\linewidth]{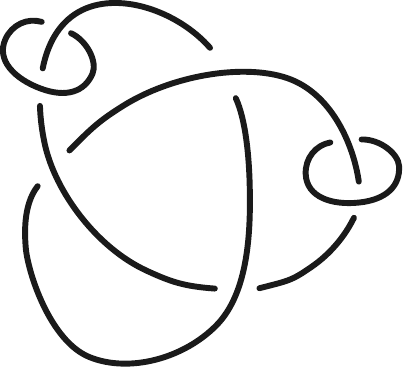}
            \caption{An annular trefoil wearing earrings.}
            \label{fig:trefoil-earrings}
        \end{subfigure}
        \caption{Examples of operations from Corollary \ref{cor:linkopsobtainingmorepwur}.}
        \label{fig:linkopsobtainingmorepwur}
    \end{figure}
    
    It remains to be seen how far this approach can be taken. 
    In one direction, Theorem \ref{thm:mainthm} could likely be upgraded to prove Conjecture \ref{con:wrappingcon} for certain annular links (perhaps without additional conditions).
    In another direction, we are motivated to ask about the extent to which links admit perfectly wrapped uniform resolutions.
    
    \begin{question}
        \label{que:everylinkhaspwur}
        Does every annular link admit a diagram with a perfectly wrapped uniform resolution? 
    \end{question}

    If true, Question \ref{que:everylinkhaspwur} would imply Conjecture \ref{con:catwrapcon} by Theorem \ref{thm:mainthm}. 
    It would also imply that the rank of the annular Khovanov homology in $k$-grading $\wrap(L)$ (over $\Z$ coefficients) is nonzero in general.

    In light of the above, it is feasible that some annular link will not admit a diagram with a perfectly wrapped uniform resolution; we may have a link whose annular Khovanov homology in $k$-grading $\wrap(L)$ is a direct sum of torsion groups.

    We can also ask about the maximal number of perfectly wrapped uniform resolutions an annular link may admit for a given diagram. 
    Denote this quantity by $U(L)$ in the manner of Corollary \ref{cor:lowerboundforwraplrank}.
    
    \begin{question}
        \label{que:computingrl}
        Can we identify, for any annular link $L$, a class of diagrams $D$ for $L$ such that the number of perfectly wrapped uniform resolutions of $D$ is equal to $U(L)$? 
    \end{question}

    The authors believe that this equality holds when $D$ is a minimal crossing diagram. 

\subsection{Acknowledgements} \label{subsec:acknowledgements}

    Special thanks to Jozef Przytycki for providing the authors access to his manuscript. We also thank Wren Burrill and Nathan Singh for useful conversations.

\section{Annular Khovanov homology} \label{sec:annularkhovanovhomology}

    Throughout the article, we work over $\Z$ coefficients.
    We will indicate when we use $\C$ coefficients as appropriate, e.g.\ in reference to the $\essl_2(\C)$ action on $\AKh$ over $\C$.

\subsection{Cube of resolutions} \label{subsec:cubeofres}

    We recount the method of computation for annular Khovanov homology. Let $L \subseteq A \times I$ be an annular link, presented via an annular diagram $D$ with $n$ crossings: $n_+$ positive and $n_-$ negative. We now form the \textit{cube of resolutions} for $D$. 
    
    Start with a directed graph $\Gamma$, where each vertex is some binary string $u \in \{0,1\}^n$. Let $|u|$ denote the sum of each digit in $u$; i.e., the number of $1$'s in $u$. Then construct an edge $u \rightarrow v$ if and only if $|u|  = |v|-1$, written $u \prec_1 v$. An example for $n = 3$ is given below:
    
    \begin{center}
        \begin{tikzcd}
                                    & 100 \arrow[r] \arrow[rd]  & 110 \arrow[rd] &     \\
    000 \arrow[ru] \arrow[r] \arrow[rd] & 010 \arrow[ru] \arrow[rd] & 101 \arrow[r]  &      111 \\
                                    & 001 \arrow[ru] \arrow[r]  & 011 \arrow[ru] &    
        \end{tikzcd}.
    \end{center}

    Number the crossings in $D$. Then we assign to any given binary string $u$ in $\Gamma$ a complete resolution of $D$, where the $i$th crossing is resolved according to the $i$th digit of $u$. The two types of resolutions are as follows: 
    
    \[
        \vertres 
        \xleftarrow{{\color{red} 0}} 
        \crossing
        \xrightarrow{{\color{blue} 1}} 
        \horizres.
    \]

    We then refer to the complete resolution of $D$ as $D_u$, or the complete resolution of $D$ corresponding to the binary string $u$. In general, $D_u$ will consist of a collection of planar circles of two types: \textit{trivial} and \textit{nontrivial}. The former class of circles are those which do not wind around the $*$, while the latter class are those which do. 

    There is now the question of what topological data the edges should encode. As each edge presently represents a bit flip, the resolution at the source of an edge will differ from the resolution at the target by a single crossing resolution swap, from type $0$ to type $1$ as above. Enacting this swap can affect the planar circles in the source resolution in six different ways, as shown in Figure \ref{fig:akh-diffs}.

    \begin{figure}
        \begin{tabular}{|c|c|}
        \hline
        merges & splits 
        \\ \hline 
        \includestandalone{./paper-images/W-merge}
        & \includestandalone{./paper-images/W-split} 
        \\ \hline
        \includestandalone{./paper-images/VW-V}
        & \includestandalone{./paper-images/V-VW} 
        \\ \hline
        \includestandalone{./paper-images/VV-W}
        & \includestandalone{./paper-images/W-VV} 
        \\ \hline
        \end{tabular}
        \caption{Left column, from top to bottom: merges $\wmerge$, $\mixmerge$, and $\veemerge$, respectively. Right column, from top to bottom: splits $\wsplit$, $\mixsplit$, and $\veesplit$.}
        \label{fig:akh-diffs}
    \end{figure}

    Each merge or split is a cobordism between the collections of planar circles. For instance, a merge of two circles into one can be locally represented as a ``pair of pants'' cobordism, shown in Figure \ref{fig:pants-cobordism}. 
    Throughout, we use {\color{red} red arcs on 0-resolutions} to indicate the surgery arc, so that surgering along this red arc results in the 1-resolution of the crossing. 
    Dually, we use {\color{blue} blue arcs on 1-resolutions} to indicate the dual surgery arc. 
    
    \begin{figure}
        \centering
        \labellist
            \pinlabel {{\color{red} $0$}} at 120 230
            \pinlabel {{\color{blue} $1$}} at 145 35
        \endlabellist
        \includegraphics[width=1.5in]{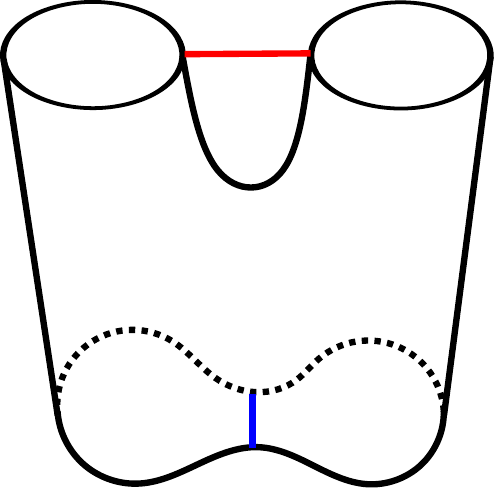}
        \caption{A pair of pants cobordism, which represents a merging of two circles when read from top to bottom. The red arc is the surgery arc on the 0-resolution; the blue arc is the dual surgery arc on the 1-resolution.}
        \label{fig:pants-cobordism}
    \end{figure}

    \begin{remark}
        Throughout, we use the term `arc' to refer to `surgery arc', as explained above. 
        Unfortunately, components of knot diagrams between crossings are also sometimes referred to as arcs; we will not be using this terminology.
    \end{remark}

\subsection{The annular Khovanov chain complex} \label{subsec:akhchaincomplex}
    
    With the combinatorics of crossing resolution switches handled, we are prepared to transform the cube of resolutions from a graph consisting of topological data to a chain complex consisting of algebraic data.
    
    Let $D_u$ be a given complete resolution corresponding to some binary string $u$. Begin by numbering each circle in $D_u$.

    Let $\vv = \Z v_+ \oplus \Z v_-$ and $\ww = \Z w_+ \oplus \Z w_-$; these are bi-graded modules with   quantum grading $\gr_q$ and $k$-grading $\gr_k$ determined by the bi-gradings on their generators:
    \begin{itemize}
        \item $(\gr_q, \gr_k) (v_\pm) = (\pm 1, \pm 1)$
        \item $(\gr_q, \gr_k) (w_\pm) = (\pm 1, 0)$.
    \end{itemize}

    \begin{remark}
    In terms of the $\essl_2(\C)$ action, $\vv \otimes_\Z \C$ is the fundamental representation for $\essl_2(\C)$, and $\ww \otimes_\Z \C$ is two copies of the trivial representation. 
    The annular $k$-grading is the $\essl_2(\C)$ weight-space grading.
    \end{remark}
    
    Define the module $\CAKh(D_u)$ as 
        \[
            \CAKh(D_u) := \bigotimes_{i} 
            \begin{cases}
            \vv & \text{ if the $i$th circle is nontrivial}\\
            \ww & \text{ if the $i$th circle is trivial}
            \end{cases}.
        \]
    Up to reordering, we may number our circles such that the module $\CAKh(D_u)$ can be written as    
    \[
        \vv^{\otimes \text{\# of nontrivial circles in $D_u$}} \otimes \ww^{\otimes \text{\# of trivial circles in $D_u$}}.
    \]

    The pure tensors that generate $\CAKh(D_u)$ are called \emph{distinguished generators}. 
    Observe that these are homogeneous in bidegree, and are easily computed by the formulas
    
    \begin{align*}
        \gr_q &= |u| + \# v_+ + \# w_+ - \# v_- - \# w_- \\
        \gr_k &= \#v_+ - \# v_-.
    \end{align*}
    
    \begin{remark}
        We can interpret a distinguished generator $x$ in a summand $\CAKh(D_u)$ as a choice of orientation for each trivial and nontrivial circle in $D_u$, with $v_+, w_+$ associated with a counterclockwise (CCW) orientation, and $v_-, w_-$ associated with a clockwise (CW) orientation. 
        Denote this oriented 1-manifold embedded in the annulus by $(D_u,x)$. 
        
        The reader may check that the $\gr_k$ of distingiushed generator corresponds to the \emph{winding number} of the oriented 1-manifold $(D_u, x)$  around the $*$: this is the algebraic intersection number of $(D_u, x)$ and a generic curve $\gamma$ connecting $*$ with $\infty$. 
    \end{remark}

    The next step is to associate to every edge a module map. Along an edge $D_u \to D_v$, a circle in $D_u$ or $D_v$ is called \emph{active} if it abuts the surgery arc or the dual surgery arc. Otherwise, it is \emph{passive}. 
    Below, we give the explicit definition of the module map $d_{u,v}$ associated to the cobordism $D_u \to D_v$. 

    \begin{definition}
    \label{defn:akh-diffs}
    The annular Khovanov differentials are explicitly defined as follows, extended by identity on passive circles. (Refer to Figure \ref{fig:akh-diffs} for the associated cobordisms.)
    \begin{itemize}
        \item A merge of two trivial circles $\wmerge$:
            \label{diff:wmerge}
            \begin{align*}
                d_{u, v} : \ww \otimes \ww & \rightarrow \ww \\
                w_+ \otimes w_+ & \mapsto w_+ \\
                w_- \otimes w_+ & \mapsto w_- \\
                w_+ \otimes w_- & \mapsto w_- \\
                w_- \otimes w_- & \mapsto 0 
            \end{align*}
        \item A merge of a nontrivial circle and a trivial circle $\mixmerge$:
            \label{diff:mixmerge}
            \begin{align*}
                d_{u,v} : \vv \otimes \ww & \rightarrow \vv \\
                v_+ \otimes w_+ & \mapsto v_+ \\
                v_+ \otimes w_- & \mapsto 0 \\
                v_- \otimes w_+ & \mapsto v_- \\
                v_- \otimes w_- & \mapsto 0 
            \end{align*}
        \item A merge of two nontrivial circles $\veemerge$:
            \label{diff:veemerge}
            \begin{align*}
                d_{u,v}: \vv \otimes \vv & \rightarrow \ww \\
                v_+ \otimes v_+ & \mapsto 0 \\
                v_- \otimes v_+ & \mapsto w_- \\
                v_+ \otimes v_- & \mapsto w_- \\
                v_- \otimes v_- & \mapsto 0 
            \end{align*}
        \item A split of one trivial circle into two trivial circles $\wsplit$:
            \label{diff:wsplit}
            \begin{align*}
                d_{u,v}: \ww & \rightarrow \ww \otimes \ww \\
                w_+ &\mapsto w_- \otimes w_+ + w_+ \otimes w_- \\
                w_- &\mapsto w_- \otimes w_-
            \end{align*}
        \item A split of one nontrivial circle into a nontrivial circle and a trivial circle $\mixsplit$:
            \label{diff:mixsplit}
            \begin{align*}
                d_{u,v}: \vv & \rightarrow \vv \otimes \ww \\
                v_+ & \mapsto v_+ \otimes w_- \\
                v_- & \mapsto v_- \otimes w_- 
            \end{align*}
        \item A split of one trivial circle into two nontrivial circles $\veesplit$:
            \label{diff:veesplit}
            \begin{align*}
                d_{u,v}: \ww & \rightarrow \vv \otimes \vv \\
                w_+ & \mapsto v_- \otimes v_+ + v_+ \otimes v_- \\
                w_- & \mapsto 0 
            \end{align*}
    \end{itemize}
    \end{definition}

    The reader may wish to verify that these maps preserve the $(\gr_q, \gr_k)$ bi-grading.
    
    \begin{notation}
    \label{notation:dots-and-arrows}
    We will also take a more combinatorial perspective on these modules and maps. 
    Any chain $\bold{x} \in \CAKh(D)$ can be written as a finite sum of distinguished generators $\sum_{i} c_i g_i$.
    Let $\langle x, g_i \rangle$ denote the coefficient of the distinguished generator $g_i$ in $\bold{x}$, using the distinguished basis. 

    Let $g, g'$ be distinguished generators, 
    and observe that 
    \[
        \langle d(g), g' \rangle 
        \in \{-1, 0, 1\}.
    \]
    This special property of Khovanov differentials allows us to interpret the chain complex $(\CAKh(D), d)$ using ``dots and arrows'', as follows and as depicted in Figure \ref{fig:dots-and-arrows}.
    
    \begin{itemize}
        \item Each distinguished generator is represented by a dot.
        \item The differential is a collection of arrows; there is an arrow from $g$ to $g'$ if and only if $\langle d(g), g' \rangle = \pm 1$.
        (We may decorate the arrow if we wish to keep track of signs.)
    \end{itemize}
    
    Suppose there is an arrow $a$ from $g$ to $g'$. 
    Then we call $g$ the \emph{source} of the arrow $a$, 
    and $g'$ the \emph{target} of the arrow $a$. 
    \end{notation}

    \begin{figure}
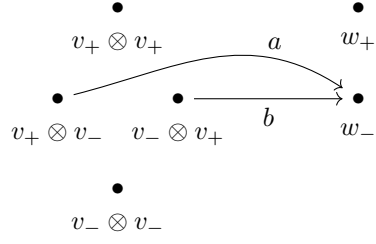

        \centering
        \includestandalone{./paper-images/dots-and-arrows}
        \caption{A dots-and-arrows depiction of a $\Z$-module merge map $m: \vv \otimes \vv \to \ww$. There are two arrows $a,b$ with sources $v_+ \otimes v_-$ and $v_- \otimes v_+$, respectively. The target of both arrows is $w_-$.}
        \label{fig:dots-and-arrows}
    \end{figure}

    We now have the data of a collection of modules, each on the vertices of our graph $\Gamma$, along with module maps corresponding to the edges in $\Gamma$. 

    To combine these into a chain complex, we first assign signs $\epsilon_{u,v} \in \{\pm 1\}$ to each edge $u \prec_1 v$ in the cube, so that each square in $\Gamma$ has an odd number of $-1$'s. 
    Then, we define the  (unshifted; see Remark \ref{rem:grading-shifts}) \emph{annular Khovanov chain complex} at (co)homological\footnote{Khovanov homology was defined with cohomological grading conventions; the differential increases homological grading by 1.} grading $i$ to be  
    \[
        \CAKh^i(D) = \bigoplus_{|u| = i} \CAKh(D_u)
    \]
    and the differential $d_i: \CAKh^i(D) \to \CAKh^{i+1}(D)$ by
    \[
        d_i = \bigoplus_{|u| = i} \sum_{u \prec_1 v} \epsilon_{u,v} d_{u,v}.
    \]
    To check that $d^2 = 0$, it suffices to check that any face of the cube 
    
    \begin{center}
    \begin{tikzcd}
        & v \arrow{dr}{d_{v,w}}& \\
        u \arrow{ur}{d_{u,v}} \arrow{dr}[swap]{d_{u,v'}} & & w \\
        & v' \arrow{ur}[swap]{d_{v',w}} & 
    \end{tikzcd}
    \end{center}
    commutes, and that the additional signs make the faces anti-commute. 

    The homology of this complex is denoted $\AKh(D)$. 
    Let $\AKh(L; k) = \AKh(D;k)$ denote the annular Khovanov homology of $L$ at $\gr_k = k$. 

    \begin{remark}
        \label{rem:grading-shifts}
        The expert reader may notice that our definition of $\AKh$ is not quite the usual definition, because we omit the global homological and quantum shifts that make annular Khovanov homology diagram-independent.
        The reason for this is that we are focused on the $k$-grading, which does not have a global shift, and it is easier to work directly with the unshifted complex.

        For a diagram $D$ of an annular link $L$, with $n_\pm$ crossings with sign $\pm$, respectively, 
        our unshifted $\AKh(D)$ is related to the usual annular Khovanov homology $AKh(L)$, as it appears in the literature, by 
        \[
            AKh^{i,j,k}(L) = \AKh^{i-n_-, j+n_+-2n_, k}(D).
        \]

        In particular, if we forget the homological and quantum gradings, we are justified in writing $\AKh(L; k)$, as this is indeed independent of the choice of diagram $D$.
    \end{remark}

\subsection{Wrapping number of diagrams and resolutions} \label{subsec:wrappingnumberinfo}

    Let $D$ be a diagram for an annular link $L$. 
    We define the \textit{wrapping number of $D$}, denoted $\wrap(D)$, to be the minimal geometric intersection between $D$ and a meridional arc in $A$. 
    Then the \textit{wrapping number} of $L$ is
        \[\wrap(L) = \min_{\mathrm{diagrams~} D} \wrap(D).\]
    For a resolution $D_u$ of $D$, we can similarly define $\wrap(D_u)$; $\wrap(D_u)$ can also be computed as the number of nontrivial circles in $D_u$.

    If we suppose that $D$ satisfies $\wrap(L) = \wrap(D)$, then every resolution of $D$ will have wrapping number less than or equal to $\wrap(L)$. 
    Consequently, computing the annular Khovanov homology from our cube of resolutions for $D$ will produce generators $x_i$ with $|\grk(x_i)| \leq \wrap(L)$.  
    Therefore $\AKh(L)$ is nontrivial only within these $k$-gradings as well.

    The reader may now verify that every annular Khovanov differential preserves the parity of nontrivial circles. 
    We conclude that $\AKh(L)$ must have support contained in the set of $k$-gradings
        \[\{-\wrap(L), -\wrap(L)+2, ..., \wrap(L)-2, \wrap(L)\},\]
    as \cite{GLW-sl2action} found.

\section{Perfectly wrapped and uniform resolutions} \label{sec:perfectlywrappedanduniformresolutions}

    In this section, we identify instances in which the existence of a single type of resolution suffices to verify Conjecture \ref{con:catwrapcon}, in the style of \cite{HostePrz-skeinmoduleofwhiteheadmanifolds} and \cite{Martin-wrapping}.

\subsection{Terminology and notation}

    We first establish terminology and notation for the coming sections.
    For a given resolution $D_u$, we let $|D_u| = |\pi_0(D_u)|$, i.e.\ the number of components. 
    
    We denote a cobordism merging two trivial circles into one by $\wmerge$, a cobordism merging a trivial circle a nontrivial circle into a nontrivial circle by $\mixmerge$, etc.

    For an annular link $L$, we are only concerned with the component of $\CAKh(L)$ at $\grk = \wrap(L)$ . 
    As the annular differential respects $k$-grading, we may analyze this component in isolation. 
    We initially restrict our search for nonvanishing classes to resolutions with wrapping number equivalent to $\wrap(L)$. 

    \begin{definition}
        \label{def:exactlywrapped}
        Let $D$ be a diagram for an annular link $L$. 
        We say that a resolution $D_u$ of $D$ is \textit{exactly wrapped} if the number of nontrivial circles in $D_u$ is equal to $\wrap(L)$.  
    \end{definition}

    To justify our frequent usage of these resolutions, we prove that they exist for arbitrary diagrams. 
    We will show a couple of lemmas to aid in this proof.

    \begin{lemma}
        \label{lma:paritymatchwrapdwrapl}
        For any annular diagram $D$ for an annular link $L$, $\wrap(D) \equiv \wrap(L) \mod 2$.
    \end{lemma}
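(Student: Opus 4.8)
The claim is that $\wrap(D) \equiv \wrap(L) \pmod 2$ for any annular diagram $D$ of an annular link $L$. The plan is to show that the parity of $\wrap$ is an invariant of the link that does not depend on the diagram, and then observe that $\wrap(D)$ realizes this parity.

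First I would reduce to a statement purely about diagrams: since $\wrap(L) = \min_D \wrap(D)$, it suffices to prove that $\wrap(D) \equiv \wrap(D') \pmod 2$ for any two annular diagrams $D, D'$ of the same annular link $L$, and then apply this to $D$ and a diagram realizing the minimum. The key tool is that any two annular diagrams of $L$ are related by a finite sequence of annular Reidemeister moves (the planar isotopies and the three Reidemeister moves, all performed in the punctured plane $A$, avoiding the $*$). So it is enough to check that a single annular Reidemeister move changes $\wrap(D)$ by an even amount.

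Here is the cleanest way I would make that check. Recall the remark in the excerpt that for a \emph{resolution} $D_u$, $\wrap(D_u)$ equals the number of nontrivial circles, and that every annular Khovanov differential preserves the parity of the number of nontrivial circles. Pick any resolution — say the all-$0$ resolution $D_{0\cdots 0}$ of $D$; the number of nontrivial circles in it has a definite parity, call it $\rho(D) \in \Z/2$. I claim $\rho(D)$ is both (i) equal to $\wrap(D) \bmod 2$ and (ii) invariant under annular Reidemeister moves. For (i): $D$ and $D_{0\cdots 0}$ represent the \emph{same} element of $H_1(A;\Z/2)$ up to the mod-$2$ count of nontrivial circles — more carefully, the mod-$2$ winding number of the underlying (unoriented) curve around $*$, summed over components, is a $\Z/2$-homology class that is unchanged by smoothing a crossing (the two smoothings of a crossing differ by a surgery, which changes $H_1$ class by zero mod $2$... this needs the observation that a crossing sits inside a small disk missing $*$). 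Hence $\rho(D) \equiv \wrap(D) \bmod 2$ because $\wrap(D)$, realized by a minimizing resolution or directly, has the same mod-$2$ count of essential strands. For (ii): a Reidemeister move is supported in a small disk $\Delta \subset A$ with $* \notin \Delta$; inside $\Delta$ the move changes the diagram, but the induced change on any fixed resolution is again supported in a disk missing $*$, so it changes the $\Z/2$-homology class — hence the parity of the nontrivial-circle count — by zero.

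The main obstacle, and the step I would be most careful about, is making precise the assertion that "the mod-$2$ count of nontrivial circles of a resolution is a homological invariant unaffected by smoothing a crossing or by a Reidemeister move." The slick route is to package it as: associate to any generic immersed curve (or resolution) in $A$ the class in $H_1(A;\Z/2) \cong \Z/2$ obtained by counting, mod $2$, intersections with a fixed meridional arc $\mu$; this equals the parity of the number of nontrivial circles of a resolution, and it equals $\wrap(D) \bmod 2$ because a geometric minimizer meets $\mu$ in $\wrap(D)$ points all of the same "side," i.e. with no cancellation forced, so its count mod $2$ is $\wrap(D) \bmod 2$. Smoothing a crossing or performing a Reidemeister move alters the curve only inside a disk disjoint from $\mu$ (after isotoping $\mu$ off the local picture), hence does not change this $\Z/2$-intersection number. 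Once that invariance is in hand, the lemma follows: $\wrap(L) \bmod 2 = \wrap(D_{\min}) \bmod 2 = \rho(D_{\min}) = \rho(D) = \wrap(D) \bmod 2$. I would likely present the argument in exactly this order — (1) parity from a fixed resolution via $\Z/2$-intersection with $\mu$; (2) invariance under smoothing; (3) invariance under annular Reidemeister moves; (4) conclude via the minimizing diagram.
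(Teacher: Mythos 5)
Your argument is correct and is essentially the paper's own proof: both hinge on the $\Z/2$-homology class of the diagram in the annulus (equivalently, the mod-$2$ intersection number with a meridional arc), which is invariant under annular Reidemeister moves and computes $\wrap(D) \bmod 2$, hence also $\wrap(L) \bmod 2$ via a minimizing diagram. Your detour through the parity of nontrivial circles in a fixed resolution is harmless but unnecessary scaffolding for the same idea.
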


    \begin{proof}
        The diagram $D \looparrowright A$ represents some $\Z/2$ homology class $[D] \in H_1(A; \Z/2) \cong \Z/2$.
        This is invariant under annular isotopy and annular Reidemeister moves, 
        so $\wrap(D) \equiv \wrap(D') \mod 2$ for any diagrams $D, D'$ of $L$.
        In particular, $\wrap(D) \equiv \wrap(L) \mod 2$, because $\wrap(L) \mod 2 = [L] \in H_1(A \times I; \Z/2) \cong \Z/2$.
    \end{proof}

    \begin{lemma}
        \label{lma:resolutionsbetweenwrapdwrapdmod2}
        Given any connected diagram $D$ for $L$, there are resolutions $D_u$ for each wrapping number between $\wrap(D) \mod 2$ and $\wrap(D)$ (of the correct parity).
    \end{lemma}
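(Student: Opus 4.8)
The plan is to study the integer-valued function $f(u):=\wrap(D_u)$, the number of nontrivial circles of the complete resolution $D_u$, on the cube of resolutions $\{0,1\}^n$, and run a discrete intermediate-value argument. First I would record two local facts about $f$. Inspecting the six cobordism types in Figure~\ref{fig:akh-diffs}, a single bit-flip changes the number of nontrivial circles by $0$ (for $\wmerge$, $\mixmerge$, $\wsplit$, $\mixsplit$) or by $\mp 2$ (for $\veemerge$, $\veesplit$), so $|f(u)-f(v)|\le 2$ whenever $u,v$ differ in one coordinate; in particular, since the cube is connected and every edge changes $f$ by an even amount, $f$ has constant parity. Consequently, along any edge-path in the cube from a vertex where $f$ is maximal to a vertex where $f$ is minimal, the values of $f$ run through \emph{every} integer of that parity in $[\min f,\max f]$ (a jump of size at most $2$ cannot skip a value of the correct parity). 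So it suffices to produce one resolution with $\wrap(D)$ nontrivial circles and one with $\wrap(D)\bmod 2$ nontrivial circles, which would also identify the constant parity of $f$ as $\wrap(D)\bmod 2$.

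For the minimum I would use connectedness to build a resolution $D_{u_0}$ consisting of a \emph{single} circle, by induction on the number of crossings. At a crossing $v$, delete $v$ from the underlying $4$-valent graph; a degree count shows each connected component of the resulting graph meets an even number of the four half-edges at $v$, and (as $D$ is connected) at least one of them, so there are at most two components — incident to $4$, or to $2$ and $2$, of the half-edges. A short case check then shows that at least one of the two planar smoothings of $v$ produces a connected diagram with one fewer crossing; inducting down to zero crossings yields a one-circle resolution $D_{u_0}$. That circle is $\Z/2$-homologous to $D$ (resolving crossings preserves the $\Z/2$ class), hence represents $[D]\in H_1(A;\Z/2)\cong\Z/2$, which by (the proof of) Lemma~\ref{lma:paritymatchwrapdwrapl} equals $\wrap(D)\bmod 2$; thus $f(u_0)=\wrap(D)\bmod 2$.

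For the maximum, fix a meridional arc $\gamma$ realizing $\wrap(D)$ (in particular transverse to $D$ and missing the crossings). Then $D_u\cap\gamma=D\cap\gamma$ for every $u$, since resolutions only alter $D$ inside disks disjoint from $\gamma$; so every resolution meets $\gamma$ in $w:=\wrap(D)$ points, and as each nontrivial circle has nonzero algebraic intersection with $\gamma$ we get $f(u)\le w$ for all $u$. To attain $w$: isotope $D$ near $\gamma$ (an isotopy supported away from the crossings, so the cube of resolutions is unchanged) so that $D$ is $w$ parallel strands there, and cut $A$ along $\gamma$ to get a tangle diagram $T$ in a rectangle with $w$ endpoints on each of the two copies $\gamma_L,\gamma_R$ of $\gamma$. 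Minimality of $\gamma$ forces $T$ to have no \emph{turnback} (an arc with both endpoints on the same copy of $\gamma$): an innermost such arc could be pushed across its disk to reroute $\gamma$ into a meridional arc meeting $D$ in two fewer points. I then claim any turnback-free tangle diagram has a crossingless resolution that is the identity matching together with some null-homotopic circles; this I would prove by induction on crossings, using that at a self-crossing — or a crossing involving a closed component — both smoothings preserve turnback-freeness, whereas at a crossing between two distinct $\gamma_L$-to-$\gamma_R$ arcs exactly the ``pass-through'' smoothing does. Re-gluing this resolution of $T$ turns each of the $w$ identity arcs into a nontrivial circle and each extra circle into a trivial one, giving $D_{u_1}$ with $f(u_1)=w=\wrap(D)$. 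Finally, choosing an edge-path from $u_1$ to $u_0$ and applying the intermediate-value observation produces, for each $m\equiv\wrap(D)\pmod 2$ with $\wrap(D)\bmod 2\le m\le\wrap(D)$, a resolution $D_u$ with $\wrap(D_u)=m$.

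The main obstacle is the maximum case — more precisely, its two geometric inputs: that minimality of $\gamma$ eliminates turnbacks in $T$, and that every turnback-free tangle diagram resolves to the identity matching. The first is a standard innermost-disk / minimal-position argument, but it must be carried out for a \emph{fixed} diagram $D$ (varying only the arc $\gamma$, not $D$ itself), so some care is needed; the second requires the full case analysis of which smoothings can create a turnback. Everything else — the local behaviour of $f$, the connectivity/half-edge-parity argument for a one-circle resolution, and the discrete intermediate value step on the cube — is routine.
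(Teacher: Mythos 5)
Your architecture (a discrete intermediate-value argument on the cube, anchored below by a one-circle resolution and above by a resolution realizing $\wrap(D)$) is sound, and both the bottom anchor and the step-by-two observation are correct; the paper packages the same descent differently, starting from a resolution with $\wrap(D_u)=\wrap(D)$, merging trivial circles into other circles using connectedness, and then applying $\veemerge$ repeatedly to drop the count by two at a time. The genuine gap is in your top anchor, specifically the claim that minimality of $\gamma$ forces the cut-open tangle $T$ to have no turnbacks. This is false. Take $D$ to be the standard $2$-crossing clasp (Whitehead-pattern) diagram: two parallel strands running around the annulus, clasped once. It is connected, has winding number $0$, and $\wrap(D)=2$, so \emph{every} arc $\gamma$ realizing $\wrap(D)$ meets $D$ in exactly two points of opposite intersection sign; cutting the knot at two opposite-sign points necessarily yields one arc with both endpoints on $\gamma_L$ and one with both endpoints on $\gamma_R$, i.e.\ two turnbacks, no matter which minimal $\gamma$ is chosen. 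Your proposed innermost-disk repair fails concretely here: the disk cobounded by a turnback and the subarc of $\gamma$ is crossed twice by the other strand, so pushing $\gamma$ across it trades the two old intersection points for two new ones and gives no reduction.

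What you actually need --- that some resolution has exactly $\wrap(D)$ nontrivial circles --- is still true in this example (either mixed resolution of the clasp produces two nontrivial circles, even though the cut tangle has turnback components), and it is worth noting that the paper does not prove this attainment statement either: its proof simply opens by taking a resolution with $\wrap(D_u)=\wrap(D)$. So your step (b), that a turnback-free tangle diagram admits a turnback-free resolution, is correct but its hypothesis is unavailable, and the argument as written does not establish the top anchor. To close the gap you would need a different justification of attainment, for instance: start from a resolution maximizing the number of nontrivial circles and construct an arc from $*$ to $\infty$ meeting only those circles, once each, after ruling out (using maximality, by switching the relevant crossings) any separating ``wall'' of trivial circles joined by crossing disks; or strengthen your induction on the cut tangle so that it tolerates turnback components of the diagram while still producing a resolution in which no circle meets $\gamma$ more than once.
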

    
    \begin{proof}
        Take a resolution $D_u$ of $D$ such that $\wrap(D_u) = \wrap(D)$. 
        In particular, $\wrap(D_u)$ is maximal out of all resolutions for $D$.
        As $D$ is connected, we may use a series of cobordisms of the type $\wmerge$ and $\mixmerge$ to obtain a resolution $D_v$ of $D$ where $\wrap(D_v) = \wrap(D)$ and every circle in $D_v$ is nontrivial. 
        Using the fact that $D$ is connected once again, each nontrivial circle in $D_v$ must abut some other nontrivial circle in $D_v$ by a crossing resolution.
        We can then obtain the desired resolutions by repeated applications of merges $\veemerge$, which preserve the parity of the number of nontrivial circles in a resolution.
    \end{proof}

    Let $D, D'$ be annular link diagrams, viewed as diagrams in $S^1 \times [1,2]$ and $S^1 \times [2,3]$, respectively. 
    The \emph{horizontal composition} of $D$ and $D'$ is the annular diagram $D \sqcup D'$ in $S^1 \times [1,3]$. 
    
    \begin{proposition}
        \label{prop:exactlywrappedresexists}
        Let $D$ be a diagram for an annular link $L$. 
        Then $D$ has an exactly wrapped resolution.
    \end{proposition}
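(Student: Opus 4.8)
The plan is to reduce to the connected case and then feed the right numerical value into Lemma \ref{lma:resolutionsbetweenwrapdwrapdmod2}. Suppose first that $D$ is connected. Since $\wrap(L)=\min_{D'}\wrap(D')$, we have $\wrap(L)\le\wrap(D)$; by Lemma \ref{lma:paritymatchwrapdwrapl} we also have $\wrap(L)\equiv\wrap(D)\pmod 2$. Together with $\wrap(L)\ge 0$, this places $\wrap(L)$ in the range from $\wrap(D)\bmod 2$ to $\wrap(D)$, of the correct parity. Lemma \ref{lma:resolutionsbetweenwrapdwrapdmod2} then produces a resolution $D_u$ of $D$ with $\wrap(D_u)=\wrap(L)$; since $\wrap(D_u)$ is the number of nontrivial circles of $D_u$, this $D_u$ is exactly wrapped.

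For a general diagram $D$, I would write $D$ as the disjoint union $D^{(1)}\sqcup\cdots\sqcup D^{(m)}$ of its connected components, each $D^{(i)}$ being a connected annular diagram for a sublink $L^{(i)}$, with $L=L^{(1)}\sqcup\cdots\sqcup L^{(m)}$. The key intermediate fact is additivity of the wrapping number, $\wrap(L)=\sum_{i}\wrap(L^{(i)})$: a meridional disk meets $L$ in the disjoint union of its intersections with the $L^{(i)}$, which yields $\ge$, while positioning the sublinks in disjoint slabs $A\times J_i$ with $J_i\subseteq I$ disjoint yields $\le$. Applying the connected case to each $D^{(i)}$ gives an exactly wrapped resolution $D^{(i)}_{u_i}$, i.e.\ one with exactly $\wrap(L^{(i)})$ nontrivial circles; then $D_u=D^{(1)}_{u_1}\sqcup\cdots\sqcup D^{(m)}_{u_m}$ is a resolution of $D$ with $\sum_i\wrap(L^{(i)})=\wrap(L)$ nontrivial circles, since whether a circle is trivial or nontrivial depends only on its winding around the $*$, not on which component it belongs to.

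The substance of the argument is entirely in the connected case, and there the only thing to check is that $\wrap(L)$ actually lies in the range of wrapping numbers realized by resolutions of $D$; this is exactly what the combination of $\wrap(L)\le\wrap(D)$ with the parity statement of Lemma \ref{lma:paritymatchwrapdwrapl} gives. The remaining steps—additivity of $\wrap$ under disjoint union and the bookkeeping of nontrivial circles across components—are routine, so I do not anticipate a genuine obstacle; the only place requiring a little care is the disconnected reduction and the additivity claim.
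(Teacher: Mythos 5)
Your proposal is correct and takes essentially the same route as the paper: reduce to the connected case via the parity statement of Lemma \ref{lma:paritymatchwrapdwrapl} together with Lemma \ref{lma:resolutionsbetweenwrapdwrapdmod2}, then handle a disconnected diagram componentwise using additivity of the wrapping number. The only cosmetic difference is that you phrase the additivity at the level of the link ($\wrap(L)=\sum_i\wrap(L^{(i)})$ for sublinks placed in disjoint slabs), whereas the paper phrases it at the level of diagrams ($\wrap(D)=\wrap(D_1)+\cdots+\wrap(D_n)$ for horizontal compositions of connected diagrams); the two statements play the same role.
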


    \begin{proof}
        From Lemma \ref{lma:paritymatchwrapdwrapl}, we know that $\wrap(D) \equiv \wrap(L) \mod 2$. 
        Moreover, $\wrap(D) \geq \wrap(L)$ by definition. 
        Lastly, for a diagram $D = D_1 \sqcup ... \sqcup D_n$ consisting of the horizontal composition of $n$ connected diagrams, we have 
            \[\wrap(D) = \wrap(D_1) + ... + \wrap(D_n).\]
        Our proposition follows immediately from Lemma \ref{lma:resolutionsbetweenwrapdwrapdmod2} applied to $D_1, ..., D_n$. 
    \end{proof}

    We will invoke Proposition \ref{prop:exactlywrappedresexists} implicitly throughout the coming sections.

    Within a collection of resolutions, we may select especially useful ones with restrictions on their incoming and outgoing cobordisms. 
    
    \begin{definition}
        \label{def:insulated}
        Let $D$ be a diagram for an annular link $L$. 
        A resolution $D_u$ is \textit{insulated} if 
        \begin{itemize}
            \item Every cobordism with source $D_u$ is of the form $\wmerge, \mixmerge, \veemerge,$ or $\veesplit$.
            \item Every cobordism with target $D_u$ is of the form $\wsplit, \mixsplit, \veesplit,$ or $\veemerge$. 
        \end{itemize}
    \end{definition}

    In other words, the resolutions adjacent to $D_u$ either have strictly less trivial circles than $D_u$, or have a different number of nontrivial circles. 
    This definition is similar to the condition on $0$- and $1$-smoothings of links in the classical definition of adequacy.

    \begin{definition}
        \label{def:perfectlywrapped}
        A resolution $D_u$ is \textit{perfectly wrapped} if $D_u$ is both exactly wrapped and insulated. 
    \end{definition}

    We now show that perfectly wrapped resolutions exist as a corollary of Proposition \ref{prop:exactlywrappedresexists}.

    \begin{lemma}
        \label{lma:existenceofperfectlywrappedres}
        Every diagram $D$ for an annular link $L$ admits a perfectly wrapped resolution.
    \end{lemma}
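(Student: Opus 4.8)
The plan is to choose an exactly wrapped resolution of $D$ that has the maximum possible number of trivial circles, and then to show that this maximality automatically forces the insulated condition. Proposition \ref{prop:exactlywrappedresexists} guarantees that exactly wrapped resolutions of $D$ exist, and since $D$ has only finitely many resolutions in total, there is an exactly wrapped resolution $D_u$ whose number of trivial circles is maximal among all exactly wrapped resolutions of $D$. (Equivalently, among exactly wrapped resolutions $D_u$ maximizes $|D_u|$, since the number of nontrivial circles is pinned to $\wrap(L)$.)

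Next I would verify the two bullets of Definition \ref{def:insulated} for $D_u$ by bookkeeping how each of the six local moves in Figure \ref{fig:akh-diffs} affects the numbers of trivial and nontrivial circles. For the outgoing condition: the cobordisms $\wsplit$ and $\mixsplit$ are exactly the outgoing moves that leave the number of nontrivial circles unchanged while raising the number of trivial circles by one. So if some edge $D_u \to D'$ were of one of these two types, then $D'$ would still have $\wrap(L)$ nontrivial circles --- hence be exactly wrapped --- but strictly more trivial circles than $D_u$, contradicting the choice of $D_u$. Therefore every cobordism out of $D_u$ is of type $\wmerge$, $\mixmerge$, $\veemerge$, or $\veesplit$. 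For the incoming condition: if some edge $D_w \to D_u$ were of type $\wmerge$ or $\mixmerge$, then passing from $D_w$ to $D_u$ preserves the nontrivial count and lowers the trivial count by one, so $D_w$ would have $\wrap(L)$ nontrivial circles and one more trivial circle than $D_u$ --- again contradicting maximality. Hence every cobordism into $D_u$ is of type $\wsplit$, $\mixsplit$, $\veesplit$, or $\veemerge$. Thus $D_u$ is insulated, and being exactly wrapped as well, it is perfectly wrapped.

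There is no genuine obstacle here beyond the case analysis; the key observation --- the thing that makes a single maximality hypothesis sufficient --- is that the four ``forbidden'' moves (two outgoing, two incoming) are precisely the moves that fix the nontrivial circle count while changing the trivial circle count monotonically in the direction that would beat $D_u$. Every other move either shrinks the trivial count on the correct side or alters the nontrivial count, so it lands outside the pool of exactly wrapped resolutions and cannot contradict maximality.
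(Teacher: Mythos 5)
Your proof is correct and follows essentially the same approach as the paper: choose an exactly wrapped resolution maximizing the number of trivial circles, then note that any incoming $\wmerge$/$\mixmerge$ or outgoing $\wsplit$/$\mixsplit$ would connect it to an exactly wrapped resolution with strictly more trivial circles, contradicting maximality. Your write-up just spells out the circle-count bookkeeping that the paper's proof leaves implicit.
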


    \begin{proof}
        Let $D_u$ be an exactly wrapped resolution of $D$ such that, for any other exactly wrapped resolution $D_v$,
            \[\text{\# of trivial circles in $D_u$} \geq \text{\# of trivial circles in $D_v$}.\]
        Suppose that $D_v$ is an exactly wrapped resolution for which there is a cobordism $D_v \rightarrow D_u$. 
        Then this cobordism must be of the form $\wsplit$ or $\mixsplit$ by the above condition. 
        If $D_w$ is an exactly wrapped resolution for which there is a cobordism $D_u \rightarrow D_w$, then this cobordism must be of the form $\wmerge$ or $\mixmerge$ in a similar fashion.
        This ensures that every incoming and outgoing cobordism is as desired.
    \end{proof}

    From the perspective of a single resolution, we wish to analyze the data of its incident cobordisms. 
    
    \begin{definition}
        \label{def:keratimgat}
        We define the \textit{null set of $D_u$}, written $\kerat(D_u)$, to be the set of all distinguished generators which are not the source of any arrow at $\grk = \wrap(L)$.
        We define the \textit{target set of $D_u$}, written $\imgat(D_u)$, to be the set of all distinguished generators that are targets for some arrow at $\grk = \wrap(L)$.
    \end{definition}
    
    Usually, $\imgat(D_u)$ is not a subset of $\kerat(D_u)$ in absence of a cocycle condition. 
    However, these objects still may allow us to verify Conjecture \ref{con:catwrapcon} in the case that $\kerat(D_u) \setminus \imgat(D_u) \neq \emptyset$, as a distinguished generator $x \in \kerat(D_u) \setminus \imgat(D_u)$ represents a nonvanishing homology class in $\AKh$. We can also obtain information about $D_u$.

    \begin{lemma}
        \label{lma:localtoglobalakh}
        If $\kerat(D_u) \setminus \imgat(D_u) \neq \emptyset$, then $D_u$ is insulated. Furthermore,  $\AKh(L; \wrap(L)) \ncong 0$. 
    \end{lemma}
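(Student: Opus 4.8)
The plan is to verify the two assertions separately, with the insulation claim coming first since it feeds into the homology conclusion.

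\textbf{Step 1: If $\kerat(D_u) \setminus \imgat(D_u) \neq \emptyset$, then $D_u$ is insulated.} Here I would argue by contrapositive: suppose $D_u$ is \emph{not} insulated, and show every distinguished generator $x$ in $k$-grading $\wrap(L)$ either is the source of an arrow or is a target of one (hence $\kerat(D_u) \setminus \imgat(D_u) = \emptyset$). There are two ways insulation can fail. First, some outgoing cobordism $D_u \to D_w$ is of type $\mixsplit$ or $\wsplit$ (a split that creates a new trivial circle). In that case I examine the module map: for $\mixsplit$, $v_\pm \mapsto v_\pm \otimes w_-$, which is never zero, so every distinguished generator is the source of an arrow; for $\wsplit$, $w_+ \mapsto w_- \otimes w_+ + w_+ \otimes w_-$ and $w_- \mapsto w_- \otimes w_-$, again never zero. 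So in either case $\kerat(D_u)$ is empty in this $k$-grading and we are done. Second, some incoming cobordism $D_v \to D_u$ is of type $\wmerge$ or $\mixmerge$ (a merge that consumes a trivial circle). I then need to check that every distinguished generator of $D_u$ in $k$-grading $\wrap(L)$ is hit by this merge map — i.e. the merge map is surjective in this $k$-grading. For $\wmerge$: $\ww \otimes \ww \to \ww$ sends $w_+ \otimes w_+ \mapsto w_+$ and $w_- \otimes w_+ \mapsto w_-$, so it is surjective onto $\ww$; tensoring with identities on passive circles keeps it surjective, and the $k$-grading of a passive-circle tensor is unaffected, so indeed every distinguished generator of $D_u$ is a target. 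For $\mixmerge$: $\vv \otimes \ww \to \vv$ sends $v_+ \otimes w_+ \mapsto v_+$, $v_- \otimes w_+ \mapsto v_-$, again surjective. Hence $\imgat(D_u)$ contains all distinguished generators of $D_u$ in this grading, so $\kerat(D_u) \setminus \imgat(D_u) = \emptyset$. Combining the two failure modes gives the contrapositive.

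\textbf{Step 2: If $\kerat(D_u) \setminus \imgat(D_u) \neq \emptyset$, then $\AKh(L;\wrap(L)) \ncong 0$.} Pick $x \in \kerat(D_u) \setminus \imgat(D_u)$, and consider the chain $\xi = (0,\dots,0,x,0,\dots,0) \in \CAKh(D)$ supported only in the summand $\CAKh(D_u)$, restricted to the $k = \wrap(L)$ subcomplex (which is a subcomplex since the differential preserves $\grk$). Since $x$ is in the null set, no arrow at $\grk = \wrap(L)$ has $x$ as its source, so $d_{u,v}(x) = 0$ for every outgoing edge, and $d(\xi) = 0$: $\xi$ is a cycle. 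Since $x$ is not in the target set, no arrow at $\grk = \wrap(L)$ has $x$ as target; I need to argue $\xi$ is not a boundary. Any chain $\eta$ with $d(\eta) = \xi$ would, in the dots-and-arrows picture, have to contribute an arrow into $x$ from some distinguished generator in a summand $\CAKh(D_v)$ with $v \prec_1 u$; but the coefficient $\langle d(\xi'), x\rangle$ over all distinguished generators $\xi'$ in the $\grk = \wrap(L)$ part is zero by hypothesis, so no linear combination can produce $x$. Hence $[\xi] \neq 0$ in the homology of the $\grk = \wrap(L)$ subcomplex, i.e. $\AKh(D; \wrap(L)) \ncong 0$, which by Remark \ref{rem:grading-shifts} equals $\AKh(L; \wrap(L))$.

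\textbf{Main obstacle.} The surjectivity bookkeeping in Step 1 for the incoming-merge case is the subtle point: one must be careful that ``passive circles'' contribute a fixed tensor factor on which the map is the identity, and that this does not change which $k$-grading the image lands in, so that surjectivity onto $\CAKh(D_u)$ genuinely restricts to surjectivity in each fixed $\grk$. The split cases in Step 1 and all of Step 2 are essentially immediate from the explicit module maps in Definition \ref{defn:akh-diffs} and the dots-and-arrows formalism of Notation \ref{notation:dots-and-arrows}. I would also note that exact-wrappedness of $D_u$ is not actually needed for this lemma — only the definitions of $\kerat$, $\imgat$, and insulated enter — though in practice $D_u$ will be perfectly wrapped when we apply it.
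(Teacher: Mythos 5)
Your proof is correct and follows essentially the same route as the paper: the contrapositive argument for insulation (outgoing $\wsplit$/$\mixsplit$ maps are never zero, so $\kerat(D_u)=\emptyset$; incoming $\wmerge$/$\mixmerge$ maps are surjective, so $\kerat(D_u)\subseteq\imgat(D_u)$), followed by exhibiting $(0,\dots,0,x,0,\dots,0)$ as a cycle that cannot be a boundary. Your extra bookkeeping on passive tensor factors and on why no linear combination can hit $x$ just makes explicit what the paper states more tersely, and your observation that exact-wrappedness is not needed is also consistent with the paper's proof.
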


    \begin{proof}
        For the first statement, we will prove the contrapositive. 
        A cobordism with source $D_u$ of the form $\wsplit$ or $\mixsplit$ induces an always nonzero map in $\wrap(L)$ grading, implying that $\kerat(D_u) = \emptyset$. 
        A cobordism with target $D_u$ of the form $\wmerge$ or $\mixmerge$ induces a map whose image spans $\CAKh(D_u)$, meaning that $\kerat(D_u) \subseteq \imgat(D_u)$, and our claim holds.

        For the second statement, let $x \in \kerat(D_u) \setminus \imgat(D_u)$. 
        Then the element  
            \[(0,...,0, x, 0, ..., 0) \in \CAKh^{|u|}(D)\]
        is in the kernel of $d_{|u|}$, as $x \in \kerat(D_u).$
        As $x \notin \imgat(D_u)$, $x$ is not the target of a single arrow by definition.
        Consequently, $d_{|u|-1}$ does not contain $(0,...,0, x, 0, ..., 0)$ in its image.
        Therefore $\AKh(L; \wrap(L)) \ncong 0$.
    \end{proof}

    Our goal is to discern exactly which resolutions satisfy the conditions of Lemma \ref{lma:localtoglobalakh}, so that we may verify Conjecture \ref{con:catwrapcon} for the link we are resolving.

\subsection{Type $0$ and type $1$ trivial circles} \label{subsec:type01circles}

    We now define the qualities of trivial circles which we seek to investigate.
    
    \begin{definition}
        \label{def:type01}
        Let $D_u$ be a resolution. We say a trivial circle $W$ in $D_u$ is \textit{type $0$} if it only abuts $0$-resolutions of crossings (i.e.\ red arcs).
        Similarly, we say a trivial circle $W$ in $D_u$ is \textit{type $1$} if it 
        only abuts $1$-resolutions of crossings (i.e.\ blue arcs).
    \end{definition}

    In the lemmas below, we use the shorthand notation $w_\pm$ to mean `either $w_+$ or $w_-$'. 

    \begin{lemma}
        \label{lma:kerclassif}
        Let $D_u$ be perfectly wrapped. 
        Let $n_1$ denote the number of type $1$ trivial circles in $D_u$. 
        Then, for a distinguished generator $x \in \CAKh(D_u)$, we have 
            \[x \in \kerat(D_u)\]
        if and only if $x$ is of the form 
            \begin{equation}
            \label{eq:not1kernel}
                \underbrace{v_+^{\otimes \wrap(L)}}_{\text{nontrivial}} 
                \otimes 
                \underbrace{w_\pm^{\otimes n_1}}_{\text{type $1$}}
                \otimes 
                \underbrace{w_-^{\otimes (|D_u| - \wrap(L) - n_1)}}_{\text{not type $1$}},
            \end{equation}            
        where every trivial circle that is \textbf{not} type $1$ is labeled $w_-$.
    \end{lemma}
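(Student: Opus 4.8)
The plan is to characterize membership in $\kerat(D_u)$ by analyzing, crossing by crossing, the outgoing cobordisms from $D_u$ and determining exactly which distinguished generators survive (i.e.\ are killed by) every such map. Since $D_u$ is perfectly wrapped, hence insulated, every outgoing cobordism is of type $\wmerge$, $\mixmerge$, $\veemerge$, or $\veesplit$; I would treat each of these four cases in turn, using the explicit formulas in Definition \ref{defn:akh-diffs}. A distinguished generator $x$ lies in $\kerat(D_u)$ precisely when $d_{u,v}(x) = 0$ for \emph{every} such outgoing edge $v$, so I need to intersect the kernels. Throughout I will use the dots-and-arrows language of Notation \ref{notation:dots-and-arrows}: $x \in \kerat(D_u)$ iff $x$ is the source of no arrow.

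First I would handle the nontrivial circles. In $\grk = \wrap(L)$, all $\wrap(L)$ nontrivial circles must be labeled $v_+$: indeed $\gr_k = \#v_+ - \#v_-$ and there are exactly $\wrap(L)$ nontrivial circles (exactly wrapped), so $\#v_+ = \wrap(L)$, $\#v_- = 0$. This is the forced `$v_+^{\otimes \wrap(L)}$' factor and requires no cobordism analysis. Next, the key local computations: (i) for a $\veemerge$ cobordism $\vv \otimes \vv \to \ww$, the two inputs are $v_+ \otimes v_+ \mapsto 0$, so this is automatically zero on our generator — no constraint. (ii) For a $\veesplit$ cobordism $\ww \to \vv \otimes \vv$, we have $w_+ \mapsto v_-\otimes v_+ + v_+ \otimes v_-$ and $w_- \mapsto 0$; so the \emph{only} way a trivial circle $W$ abutting such a cobordism contributes $0$ is if $W$ is labeled $w_-$. (iii) For a $\mixmerge$ cobordism $\vv \otimes \vv \to \vv$ — wait, it's $\vv \otimes \ww \to \vv$ — the relevant input is $v_+ \otimes w_+ \mapsto v_+$ and $v_+ \otimes w_- \mapsto 0$; since the nontrivial slot is forced to $v_+$, the trivial circle must be labeled $w_-$ to be killed. (iv) For a $\wmerge$ cobordism $\ww \otimes \ww \to \ww$: $w_+\otimes w_+ \mapsto w_+$, the mixed ones $\mapsto w_-$, and $w_-\otimes w_- \mapsto 0$; so \emph{both} trivial circles involved in a $\wmerge$ must be labeled $w_-$ for the output to vanish.

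Assembling these: a trivial circle $W$ that is \emph{not} type $1$ abuts at least one $0$-resolution, i.e.\ a red surgery arc. The cobordism realized by surgering along that red arc has $D_u$ as its source (it is the $u \to v$ edge flipping that crossing), and because $D_u$ is insulated this cobordism is one of $\wmerge$, $\mixmerge$, $\veemerge$, $\veesplit$; a merge along that arc involving $W$ forces $W = w_-$ (cases (iii),(iv)), and in the $\veesplit$ case $W$ is the source trivial circle, again forcing $W = w_-$; the $\veemerge$ case cannot involve a trivial circle at all, so it's vacuous for $W$. (Here I would note that a merge cobordism at that crossing must involve $W$ as one of the two merged circles, since $W$ abuts that arc.) Conversely, type $1$ trivial circles abut only blue arcs, so none of the \emph{outgoing} cobordisms of $D_u$ touch them nontrivially — they are passive along every outgoing edge — hence they may carry either label $w_+$ or $w_-$ freely, giving the `$w_\pm^{\otimes n_1}$' factor. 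This establishes both directions: generators of the form \eqref{eq:not1kernel} are killed by every outgoing cobordism, and any generator in $\kerat(D_u)$ must have this form.

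The main obstacle I anticipate is the bookkeeping around a single crossing abutted by a trivial circle: one must verify that when the red arc at a $0$-resolution touches a not-type-$1$ circle $W$, the resulting cobordism genuinely constrains $W$'s label — in particular ruling out that $W$ is a passive circle for that cobordism (it cannot be, since it abuts the arc) and handling the case where the arc's two endpoints lie on the \emph{same} circle $W$ (a split with source $W$), versus on two distinct circles (a merge). For the `same circle' subcase the arc performs a split of $W$; since $W$ is trivial the split is either $\wsplit$ (excluded by insulatedness as an outgoing type) or $\veesplit$, and for $\veesplit$ the source $W$ must be $w_-$ — consistent with \eqref{eq:not1kernel}. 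I would also double check the degenerate possibility that a crossing is abutted by $W$ on both sides in a way producing a $\wmerge$ with $W$ appearing twice, but the conclusion $W = w_-$ is unchanged. Once these local cases are dispatched, the global statement follows by taking the conjunction of the per-crossing constraints, since $\kerat(D_u)$ is by definition the set of generators killed simultaneously by all outgoing arrows.
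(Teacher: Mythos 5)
Your proposal is correct and follows essentially the same route as the paper: the $k$-grading forces $v_+$ on all nontrivial circles, insulatedness restricts outgoing cobordisms to $\wmerge$, $\mixmerge$, $\veemerge$, $\veesplit$, and the explicit differential formulas show that not-type-$1$ circles (active for some outgoing edge) must be labeled $w_-$ while type-$1$ circles (passive for every outgoing edge) are unconstrained. The extra care you take with arcs whose endpoints lie on the same circle is subsumed in the paper's active/passive framing, but it is consistent with and does not alter the argument.
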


    \begin{proof}
        Recall that $\kerat(D_u)$ only contains distinguished generators with $k$-grading equal to $\wrap(L)$, by definition. 
        Therefore, for a distinguished generator to be in $\kerat(D_u)$, every nontrivial circle must be labeled $v_+$. 
    
        For the matter of trivial circles, observe that a trivial circle $W$ in $D_u$ is active for some cobordism with source $D_u$ if and only if $W$ is not type $1$.

        We show that if a distinguished generator is not of the form above, then it is not in the kernel.
        Consider a distinguished generator     
            \begin{equation*}
            \label{eq:outsideofkernel}
                y = 
                \underbrace{v_+^{\otimes \wrap(L)}}_{\text{nontrivial}} 
                \otimes 
                \underbrace{w_\pm^{\otimes n_1}}_{\text{type $1$}}
                \otimes 
                \underbrace{w_+}_{\text{not type $1$}}
                \otimes 
                \underbrace{w_\pm^{\otimes (|D_u| - \wrap(L) - n_1 - 1)}}_{\text{not type $1$}},
            \end{equation*}     
        where some trivial circle $W$ that is not type $1$ is labeled $w_+$. 
        As $W$ is not type $1$, $W$ is active for some cobordism $D_u \rightarrow D_v$. 
        The map induced by this cobordism will be either 
        $\ww \otimes \ww \rightarrow \ww$, $\vv \otimes \ww \rightarrow \ww$, or $\ww \rightarrow \vv \otimes \vv$.
        By inspection of the $\AKh$ differentials (Definition \ref{defn:akh-diffs}), one may check that $y$ is not in the kernel of any of these maps. 
        Hence, $y \notin \kerat(D_u)$.
        
        On the other hand, let $x$ be a distinguished generator of the form shown in \eqref{eq:not1kernel}, where every trivial circle that is not type $1$ is labeled $w_-$. 
        As $D_u$ is perfectly wrapped, there are four possible cobordisms with source $D_u$, namely $\wmerge$, $\mixmerge$, $\veemerge$, and $\veesplit$.

        \begin{enumerate}
            \item Consider the map $m_{W, W'}: \ww \otimes \ww \rightarrow \ww$ corresponding to the cobordism $\wmerge$, which merges trivial circles $W$ and $W'$.
            As both trivial circles are active for this cobordism, they are connected by a $0$ crossing resolution, meaning that neither are type $1$. 
            Therefore, they are both labeled $w_-$. Hence, $m_{W, W'}(x) = 0$.
            \item For the map $m_{V, W} : \vv \otimes \ww \rightarrow \vv$ corresponding to the cobordism $\mixmerge$, merging a nontrivial circle $V$ and a trivial circle $W$, we know that $W$ is not type $1$ as it is active for this cobordism. 
            Hence, it is labeled $w_-$, implying $m_{V, W}(x) = 0$.
            \item Next, we consider the map $m_{V,V'}: \vv \otimes \vv \rightarrow \ww$ corresponding to the cobordism $\veemerge$, which merges nontrivial circles $V$ and $V'$. 
            This map is trivial at the $\wrap(L)$ $k$-grading, so $m_{V,V'}(x) = 0$.
            \item Finally, we have the map $\Delta_{V, V'}: \ww \rightarrow \vv \otimes \vv$ corresponding to the cobordism $\veesplit$, which splits a trivial circle $W$ into two nontrivial circles $V, V'$. 
            Given that $W$ is active for this cobordism, $W$ is not type $1$, and is thus labeled by $w_-$. 
            Therefore, $\Delta_{V, V'}(x) = 0$. 
        \end{enumerate}
        Therefore, along each edge of the cube of resolutions with source $D_u$, the induced map sends $x$ to $0$, so $x \in \kerat(D_u)$.
    \end{proof}

    The following lemma is, in a sense, dual to the previous lemma.

    \begin{lemma}
        \label{lma:imgclassif}
        Let $D_u$ be perfectly wrapped. 
        Let $n_0$ denote the number of type $0$ trivial circles in $D_u$.
        Then, for a distinguished generator $x \in \CAKh(D_u)$, we have 
            \[x \notin \imgat(D_u)\]
        if and only if $x$ is of the form        
            \begin{equation}
            \label{eq:not0image}
                \underbrace{v_+^{\otimes \wrap(L)}}_{\text{nontrivial}} 
                \otimes 
                \underbrace{w_\pm^{\otimes n_0}}_{\text{type $0$}}
                \otimes 
                \underbrace{w_+^{\otimes (|D_u| - \wrap(L) - n_0)}}_{\text{not type $0$}},
            \end{equation}     
        where every trivial circle that is not type $0$ is labeled $w_+$.
    \end{lemma}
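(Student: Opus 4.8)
The plan is to mirror the proof of Lemma \ref{lma:kerclassif}, systematically exchanging ``source of an arrow'' with ``target of an arrow'', $1$-resolutions with $0$-resolutions, and the label $w_-$ with $w_+$. As in that lemma, I would restrict attention at the outset to distinguished generators $x$ with $\grk(x) = \wrap(L)$; since $D_u$ is exactly wrapped it has exactly $\wrap(L)$ nontrivial circles, so such an $x$ must label every nontrivial circle $v_+$, and only the labels on trivial circles are free. The key local observation, dual to the one used in Lemma \ref{lma:kerclassif}, is that a trivial circle $W$ of $D_u$ is active for some cobordism with target $D_u$ if and only if $W$ is not type $0$: a cobordism with target $D_u$ switches some crossing from its $0$-resolution in the source to its $1$-resolution in $D_u$, so the circles of $D_u$ active for that cobordism are exactly those abutting a $1$-resolved crossing, and a trivial circle abuts a $1$-resolved crossing precisely when it is not type $0$.

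With this observation I would argue the two implications. For the forward direction, assume $x$ has the form \eqref{eq:not0image}, so every non-type-$0$ trivial circle carries the label $w_+$. Since $D_u$ is perfectly wrapped it is insulated, so every cobordism with target $D_u$ has type $\wsplit$, $\mixsplit$, $\veesplit$, or $\veemerge$, and I would eliminate each by inspecting Definition \ref{defn:akh-diffs} at $\grk = \wrap(L)$: for $\wsplit$ and $\mixsplit$ the trivial circle(s) active in $D_u$ are non-type-$0$, hence labeled $w_+$ in $x$, but the image of either map always carries a $w_-$ on (one of) these circles; for $\veesplit$ the two circles active in $D_u$ are nontrivial and the map forces them to carry the labels $v_+$ and $v_-$ (one each), contradicting that $x$ labels all nontrivial circles $v_+$; and for $\veemerge$ the unique trivial circle active in $D_u$ is non-type-$0$, hence $w_+$ in $x$, whereas the $\veemerge$ map takes values in $\{w_-, 0\}$. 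Hence $x \notin \imgat(D_u)$. For the converse, suppose $x$ is not of the form \eqref{eq:not0image}; since all nontrivial labels are forced to be $v_+$, this means some non-type-$0$ trivial circle $W$ is labeled $w_-$. By the key observation $W$ is active for some cobordism with target $D_u$, and that cobordism --- using insulation again, together with the fact that no trivial circle of $D_u$ is ever active for an incoming $\veesplit$ --- has type $\wsplit$, $\mixsplit$, or $\veemerge$, with $W$ an output trivial circle. In each case I would exhibit a distinguished generator $g$ of the source resolution, lying in $\grk = \wrap(L)$, with $\langle d(g), x \rangle = \pm 1$: for $\wsplit$ take the pre-split circle with the same label that $x$ assigns to the other output circle; for $\mixsplit$ take the pre-split nontrivial circle labeled $v_+$; for $\veemerge$ take the two pre-merge nontrivial circles labeled $v_-$ and $v_+$; and in all cases label every circle untouched by the cobordism exactly as $x$ does. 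This displays $x$ as the target of an arrow, so $x \in \imgat(D_u)$, and the two implications give the stated equivalence.

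The step that requires real care, and the only genuine departure from a mechanical dualization of Lemma \ref{lma:kerclassif}, is tracking how the condition $\grk = \wrap(L)$ interacts with the change in the number of nontrivial circles along $\veemerge$ and $\veesplit$ edges incident to $D_u$. Along an incoming $\veesplit$ the source resolution has $\wrap(L) - 2$ nontrivial circles, hence supports no distinguished generator at all in $k$-grading $\wrap(L)$, so such edges contribute nothing to $\imgat(D_u)$ --- consistent with the observation, used above, that no trivial circle of $D_u$ is active for one. Along an incoming $\veemerge$ the source has $\wrap(L) + 2$ nontrivial circles, so a generator of it in $k$-grading $\wrap(L)$ must carry exactly one $v_-$ label, and checking that this is precisely the configuration on which the $\veemerge$ differential is nonzero is what makes the $\veemerge$ subcase of the converse go through. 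Once these parity constraints are settled, what remains is the routine comparison against the six differentials of Definition \ref{defn:akh-diffs}, exactly parallel to Lemma \ref{lma:kerclassif}.
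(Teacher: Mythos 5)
Your proposal is correct and follows essentially the same route as the paper's proof: the same key observation that a trivial circle is active for a cobordism with target $D_u$ exactly when it is not type $0$, the same case analysis over the incoming cobordisms $\wsplit$, $\mixsplit$, $\veesplit$, $\veemerge$ permitted by insulation, and explicit preimages for the converse. Your extra bookkeeping of the $k$-grading along $\veemerge$ and $\veesplit$ edges is just a more detailed version of the paper's remark that the $\veesplit$ case is trivial in $k$-grading $\wrap(L)$.
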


    \begin{proof}
        By the same reasoning as in Lemma \ref{lma:kerclassif}, we must label our nontrivial circles by $v_+$.

        A trivial circle $W$ in $D_u$ is active for a cobordism $D_v \rightarrow D_u$ if and only if $W$ is \textbf{not} type 0. 
        
        Consider a distinguished generator           
            \begin{equation*}
            \label{eq:insideofimage}
                y = 
                \underbrace{v_+^{\otimes \wrap(L)}}_{\text{nontrivial}} 
                \otimes 
                \underbrace{w_\pm^{\otimes n_0}}_{\text{type $0$}}
                \otimes 
                \underbrace{w_-}_{\text{not type $0$}}
                \otimes
                \underbrace{w_\pm^{\otimes (|D_u| - \wrap(L) - n_0 - 1)}}_{\text{not type $0$}},
            \end{equation*}        
        where some trivial circle $W$ that is not type $0$ is labeled $w_-$. 
        By the previous remark, there is a cobordism $D_v \rightarrow D_u$ affecting $W$. 
        This cobordism will induce a map $\ww \rightarrow \ww \otimes \ww$, $\vv \otimes \vv \rightarrow \ww$, or $\vv \rightarrow \vv \otimes \ww$. 
        We may then check that $y$ is in the image of each of these maps, so that $y \in \imgat(D_u)$.

        Let $x$ be of the form shown in \eqref{eq:not0image}, where every trivial circle that is not type $0$ is labeled $w_+$. 
        As $D_u$ is perfectly wrapped, there are four possible cobordisms with target $D_u$:
        (1) $\wsplit$, 
        (2) $\mixsplit$, 
        (3) $\veesplit$, and 
        (4) $\veemerge$. 
        
        Case (3) is immediate, as the induced map is trivial in $\wrap(L)$ $k$-grading. 
        Cases (1) and (2) are both splits, where the active trivial circles in $D_u$ are all labeled $+$; therefore, we may check that $x$ is not in the image of either of these induced maps. 
        For case (4), the active trivial circle is labeled $+$. Once again, the induced map does not contain $x$ in its image.
        We conclude that $x \notin \imgat(D_u)$ as desired. 
    \end{proof}

\subsection{Proof of the main result} \label{subsec:mainresult}
    We will now introduce a key definition, one which will allow us to accomplish our goal of classifying resolutions $D_u$ for which Lemma \ref{lma:localtoglobalakh} applies.  

    \begin{definition}
        \label{def:uniform}
        We say a resolution $D_u$ of a diagram $D$ for an annular link $L$ is \textit{uniform} if every trivial circle in $D_u$ is of type $0$ or type $1$.
    \end{definition}

    Note that we allow for the possibility of split nullhomologous unknot components of $D$ in the annulus; these are trivial circles that are both type $0$ and type $1$. 

    \begin{lemma}
        \label{lma:implicationofuniformity}
        If $\kerat(D_u) \setminus \imgat(D_u) \neq \emptyset$, then $D_u$ is uniform.
    \end{lemma}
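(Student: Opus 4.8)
The plan is to read off uniformity of $D_u$ directly from the two classification results already in hand, Lemmas \ref{lma:kerclassif} and \ref{lma:imgclassif}. First I would invoke Lemma \ref{lma:localtoglobalakh}: since $\kerat(D_u) \setminus \imgat(D_u) \neq \emptyset$, the resolution $D_u$ is insulated. Together with exact wrappedness (which holds because $\kerat(D_u)$ already contains a distinguished generator in $k$-grading $\wrap(L)$, so $D_u$ has at least, hence exactly, $\wrap(L)$ nontrivial circles), this makes $D_u$ perfectly wrapped, so Lemmas \ref{lma:kerclassif} and \ref{lma:imgclassif} apply.

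Next I would fix a distinguished generator $x \in \kerat(D_u) \setminus \imgat(D_u)$ and extract the constraints it imposes on the labels of trivial circles. By Lemma \ref{lma:kerclassif}, the membership $x \in \kerat(D_u)$ forces every trivial circle of $D_u$ that is not type $1$ to be labeled $w_-$ in $x$; by Lemma \ref{lma:imgclassif}, the membership $x \notin \imgat(D_u)$ forces every trivial circle of $D_u$ that is not type $0$ to be labeled $w_+$ in $x$. I would then argue by contradiction: if $D_u$ failed to be uniform, there would be a trivial circle $W$ that is neither type $0$ nor type $1$ (Definition \ref{def:uniform}). Being not type $1$, $W$ would have to be labeled $w_-$ in $x$; being not type $0$, $W$ would have to be labeled $w_+$ in $x$ --- a contradiction. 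Hence every trivial circle of $D_u$ is type $0$ or type $1$, i.e.\ $D_u$ is uniform.

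If one prefers to avoid the small amount of bookkeeping needed to invoke ``perfectly wrapped'', the same conclusion can be reached using only insulatedness. A non-uniform trivial circle $W$ abuts both a $0$-resolution and a $1$-resolution, so $W$ is active for some cobordism with source $D_u$ --- necessarily of type $\wmerge$, $\mixmerge$, or $\veesplit$ by Definition \ref{def:insulated} --- and for some cobordism with target $D_u$ --- necessarily of type $\wsplit$, $\mixsplit$, or $\veemerge$. Checking the maps of Definition \ref{defn:akh-diffs} case by case, vanishing of the outgoing map on $x$ (from $x \in \kerat(D_u)$) pins the $W$-label to $w_-$, while the requirement that $x$ not lie in the image of the incoming map (from $x \notin \imgat(D_u)$) pins it to $w_+$, again a contradiction. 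I expect this case check --- in particular the $\wsplit$ case, where one must additionally track the label on the sibling circle produced by the split --- to be the main (if modest) obstacle; the rest is a direct application of the preceding lemmas.
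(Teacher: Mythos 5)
Your fallback argument (the final paragraph) is correct and is essentially the paper's own proof, phrased as a contradiction rather than as the contrapositive: after using Lemma \ref{lma:localtoglobalakh} to reduce to the insulated case, a non-uniform trivial circle $W$ is active both for an outgoing cobordism, necessarily of type $\wmerge$, $\mixmerge$ or $\veesplit$, and for an incoming cobordism, necessarily of type $\wsplit$, $\mixsplit$ or $\veemerge$; vanishing of the outgoing component on a witness $x \in \kerat(D_u)$ pins the $W$-label to $w_-$, and each incoming case then exhibits $x$ as the target of an arrow (in the $\wsplit$ case by choosing the source label according to the sibling circle's label, exactly the bookkeeping you anticipate), so $x \in \imgat(D_u)$, a contradiction. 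The paper instead shows directly that $\kerat(D_u) \subseteq \imgat(D_u)$, but the content is identical.

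Your primary route, however, has a genuine gap: the step ``at least, hence exactly'' $\wrap(L)$ nontrivial circles. A distinguished generator in $k$-grading $\wrap(L)$ only gives $\#v_+ - \#v_- = \wrap(L)$, so $D_u$ has \emph{at least} $\wrap(L)$ nontrivial circles; nothing available at this point excludes $\wrap(D_u) > \wrap(L)$ (with some nontrivial circles labelled $v_-$ in the witness). In the paper, $\wrap(D_u) = \wrap(L)$ is established only inside Theorem \ref{thm:perfectlywrappeduniform}, and that argument uses the present lemma (uniformity is needed to write down the generator $z$) together with the support bound on $\AKh$; invoking exact wrapping here is therefore circular. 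Without it you are not entitled to Lemmas \ref{lma:kerclassif} and \ref{lma:imgclassif}, which are stated and proved only for perfectly wrapped resolutions --- and their conclusions genuinely use exact wrapping, e.g.\ if some nontrivial circle carried $v_-$, a $\veemerge$ component need not vanish in $k$-grading $\wrap(L)$, so the classification of $\kerat(D_u)$ would change. Keep the insulatedness-only argument and drop the first route.
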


    \begin{proof}

        We will prove the contrapositive.
        Assume that $D_u$ is not uniform. 
        First note that if $D_u$ is \emph{not} insulated, then Lemma \ref{lma:localtoglobalakh} implies that $\kerat(D_u) \setminus \imgat(D_u) = \emptyset$, and we are done.
        So, we may assume that $D_u$ is insulated.

        As $D_u$ is not uniform, there exists some trivial circle $W$ in $D_u$ which abuts both a $0$-resolution and a $1$-resolution.   
        
        As $W$ abuts a $0$-resolution, it is active for a cobordism $D_u \rightarrow D_v$. 
        If $y$ is a distinguished generator in $\kerat(D_u)$, we may check by cases on the induced maps (as we do in Lemma \ref{lma:kerclassif}) that $W$ must be labeled by $w_-$ in $y$, keeping in mind that $D_u$ is insulated.
        
        As $W$ also abuts a $1$-resolution, it is active for a cobordism $D_w \rightarrow D_u$ of the form $\wsplit, \mixsplit,$ or $\veemerge$.
        Recalling that $W$ is labeled $w_-$ in $y$, 
        we may again check that each of these cases places $y$ in $\imgat(D_u)$.
      
        Thus $\kerat(D_u) \subseteq \imgat(D_u)$, and so $\kerat(D_u) \setminus \imgat(D_u) = \emptyset$ as desired.
    \end{proof}

    We are now ready to prove the main theorem.

    \begin{theorem}
        \label{thm:perfectlywrappeduniform}
        Let $D$ be a diagram for an annular link $L$. 
        Then a resolution $D_u$ of $D$ is perfectly wrapped and uniform if and only if $\kerat(D_u) \setminus \imgat(D_u) \neq \emptyset$.
        In this case, let $n_0, n_1, n_2$ respectively denote the number of trivial circles which are type $0$ and not type $1$, type $1$ and not type $0$, and both type $0$ and type $1$.
        Then, up to renumbering of trivial circles, $\kerat(D_u) \setminus \imgat(D_u)$ contains exactly those distinguished generators $x$ of the form 
            \begin{equation}
            \label{eq:inkernelnotinimage} 
                v_+^{\otimes \wrap(L)}
                \otimes 
                \underbrace{w_-^{\otimes n_0}}_{\text{type $0$}}
                \otimes 
                \underbrace{w_\pm^{\otimes n_2}}_{\text{both}}
                \otimes 
                \underbrace{w_+^{\otimes n_1}}_{\text{type $1$}}.
            \end{equation}
        In particular, $L$ satisfies Conjecture \ref{con:catwrapcon}.
    \end{theorem}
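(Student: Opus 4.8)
The plan is to assemble the theorem from the lemmas already proven in the section, treating the two directions of the biconditional separately and then reading off the explicit description of $\kerat(D_u) \setminus \imgat(D_u)$ by combining the classifications in Lemma~\ref{lma:kerclassif} and Lemma~\ref{lma:imgclassif}. For the forward direction, suppose $D_u$ is perfectly wrapped and uniform. Perfect wrapping means $D_u$ is exactly wrapped and insulated, so Lemmas~\ref{lma:kerclassif} and~\ref{lma:imgclassif} both apply. Uniformity means every trivial circle is type $0$ or type $1$ (possibly both), so in the notation $n_0, n_1, n_2$ of the statement we have $|D_u| - \wrap(L) = n_0 + n_1 + n_2$, and there are no ``other'' trivial circles. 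I would then intersect the two descriptions: by Lemma~\ref{lma:kerclassif}, a generator lies in $\kerat(D_u)$ iff all nontrivial circles are $v_+$, every circle that is not type $1$ is labeled $w_-$, and type $1$ circles are labeled arbitrarily; by Lemma~\ref{lma:imgclassif}, a generator avoids $\imgat(D_u)$ iff all nontrivial circles are $v_+$, every circle that is not type $0$ is labeled $w_+$, and type $0$ circles are arbitrary. A type $0$-not-type-$1$ circle is ``not type $1$,'' so the kernel condition forces it to be $w_-$; it is ``type $0$,'' so the image condition allows anything; net: $w_-$. Symmetrically a type $1$-not-type-$0$ circle is forced to $w_+$. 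A both-types circle is ``type $0$'' and ``type $1$,'' so both conditions leave it free, giving $w_\pm$. This is exactly formula~\eqref{eq:inkernelnotinimage}, and since at least one such generator exists (take the type-$2$ circles all $w_+$, say), the set is nonempty.

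For the reverse direction, assume $\kerat(D_u) \setminus \imgat(D_u) \neq \emptyset$. Lemma~\ref{lma:localtoglobalakh} gives that $D_u$ is insulated, and Lemma~\ref{lma:implicationofuniformity} gives that $D_u$ is uniform. It remains to check that $D_u$ is exactly wrapped, i.e.\ that the number of nontrivial circles equals $\wrap(L)$. Here I would argue: the null set $\kerat(D_u)$ is by definition contained in the $\grk = \wrap(L)$ summand, so its being nonempty forces $\CAKh(D_u)$ to have a nonzero component in $k$-grading $\wrap(L)$; the maximal $k$-grading attained by a distinguished generator of $D_u$ is precisely the number of nontrivial circles of $D_u$ (label every nontrivial circle $v_+$), and by the discussion in Subsection~\ref{subsec:wrappingnumberinfo} this number is $\wrap(D_u) \leq \wrap(D)$. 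Combined with $\wrap(D) \geq \wrap(L)$ and the parity constraint, the only way for the $\wrap(L)$ component to be nonzero while also being consistent with insulation is that $\wrap(D_u) = \wrap(L)$ exactly; more directly, a generator in $\kerat(D_u)$ must have $\grk = \wrap(L)$ and must label every nontrivial circle $v_+$, which is only possible if the number of nontrivial circles is at least $\wrap(L)$, while $\wrap(D_u) \leq \wrap(L)$ gives the reverse inequality, so $D_u$ is exactly wrapped. Together with insulation this makes $D_u$ perfectly wrapped.

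The final clause, that $L$ satisfies Conjecture~\ref{con:catwrapcon}, is then immediate from the second statement of Lemma~\ref{lma:localtoglobalakh}: any $x \in \kerat(D_u) \setminus \imgat(D_u)$ yields a nonzero class in $\AKh(L; \wrap(L))$.

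The main obstacle I anticipate is the bookkeeping in the reverse direction's exact-wrapping step: one has to be careful that ``$\wrap(D_u)$ equals the number of nontrivial circles'' is being used correctly, that $\wrap(D) \geq \wrap(L)$ is the right inequality, and that no generator of lower $k$-grading sneaks into $\kerat(D_u)$ (it cannot, by the definition of $\kerat$, but this should be stated). The forward direction is essentially a clean merge of the two classification lemmas and should be routine once the case analysis on circle types (type $0$ only, type $1$ only, both) is laid out explicitly; the only subtlety is confirming the three cases are exhaustive under uniformity and that the labels they force are mutually consistent (they are, since a circle can be both type $0$ and type $1$ only when it is a split unknot component, in which case neither lemma constrains it).
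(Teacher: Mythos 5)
Your forward direction and your description of $\kerat(D_u) \setminus \imgat(D_u)$ by intersecting Lemmas~\ref{lma:kerclassif} and~\ref{lma:imgclassif} match the paper's argument, and the final appeal to Lemma~\ref{lma:localtoglobalakh} is also the paper's. The gap is in the reverse direction, at the step where you conclude $D_u$ is exactly wrapped. You assert $\wrap(D_u) \leq \wrap(L)$, but the inequalities you actually have are $\wrap(D_u) \leq \wrap(D)$ and $\wrap(D) \geq \wrap(L)$, which point in opposite directions and give no bound of $\wrap(D_u)$ by $\wrap(L)$: the theorem makes no assumption that $D$ realizes the wrapping number, so $\wrap(D)$, and hence possibly $\wrap(D_u)$, can exceed $\wrap(L)$. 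The auxiliary claim that an element of $\kerat(D_u)$ ``must label every nontrivial circle $v_+$'' also presupposes exact wrapping: if $\wrap(D_u) > \wrap(L)$, a generator in $k$-grading $\wrap(L)$ necessarily carries some $v_-$ labels, and the phrase ``consistent with insulation'' does not by itself rule this out.

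The paper closes this case by a global, homological argument rather than a diagrammatic inequality. Having deduced (as you do) that $D_u$ is insulated and uniform, and that $\wrap(D_u) \geq \wrap(L)$ since $\kerat(D_u) \neq \emptyset$, it supposes $\wrap(D_u) > \wrap(L)$ and considers the top-$k$-grading generator
\[
z = v_+^{\otimes \wrap(D_u)} \otimes w_-^{\otimes n_0} \otimes w_\pm^{\otimes n_2} \otimes w_+^{\otimes n_1} \in \CAKh(D_u).
\]
Insulation and uniformity imply, by the same case analysis as in Lemmas~\ref{lma:kerclassif} and~\ref{lma:imgclassif} but run in $k$-grading $\wrap(D_u)$, that every outgoing differential kills $z$ and no incoming differential hits it, so $(0,\dots,0,z,0,\dots,0)$ would represent a nonzero class in $\AKh(L; \wrap(D_u))$. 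This contradicts the diagram-independent support bound $|\grk| \leq \wrap(L)$ for $\AKh(L)$ recalled in Subsection~\ref{subsec:wrappingnumberinfo}, forcing $\wrap(D_u) = \wrap(L)$. You need some version of this argument (or an added hypothesis that $\wrap(D) = \wrap(L)$, which the theorem does not assume) to complete your reverse direction.
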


    \begin{proof}  
        ($\Longrightarrow$) Suppose that $D_u$ is perfectly wrapped and uniform.    
        Up to renumbering of trivial circles, we may use Lemma \ref{lma:kerclassif} to conclude that $\kerat(D_u)$ contains exactly those distinguished generators of the form          
            \begin{equation*}
            \label{eq:not1kernelv2}
                v_+^{\otimes \wrap(L)}
                \otimes 
                \underbrace{w_-^{\otimes n_0}}_{\text{type $0$}}
                \otimes 
                \underbrace{w_\pm^{\otimes n_2} \otimes w_\pm^{\otimes n_1}}_{\text{type $1$}},
            \end{equation*}            
        in which each type $0$ trivial circle is labeled $w_-$. 
        Similarly, Lemma \ref{lma:imgclassif} tells us that $\imgat(D_u)$ exactly excludes those distinguished generators of the form
            \begin{equation*}
            \label{eq:not0imagev2}
                v_+^{\otimes \wrap(L)}
                \otimes 
                \underbrace{w_\pm^{\otimes n_0} \otimes w_\pm^{\otimes n_2}}_{\text{type $0$}}
                \otimes 
                \underbrace{w_+^{\otimes n_1}}_{\text{type $1$}},
            \end{equation*}
        in which each type $1$ trivial circle is labeled $w_+$.

        Now define
            \begin{equation}
            \label{eq:inkernelnotinimagev2}
                x = 
                v_+^{\otimes \wrap(L)}
                \otimes 
                \underbrace{w_-^{\otimes n_0}}_{\text{type $0$}}
                \otimes 
                \underbrace{w_\pm^{\otimes n_2}}_{\text{both}}
                \otimes 
                \underbrace{w_+^{\otimes n_1}}_{\text{type $1$}}.
            \end{equation}
        By inspection, $x \in \kerat(D_u) \setminus \imgat(D_u)$, so $\kerat(D_u) \setminus \imgat(D_u) \neq \emptyset$. 

        ($\Longleftarrow$) Assume that $\kerat(D_u) \setminus \imgat(D_u) \neq \emptyset$. 

        By Lemmas \ref{lma:localtoglobalakh} and \ref{lma:implicationofuniformity}, $D_u$ is insulated and uniform. Moreover, $\wrap(D_u) \geq \wrap(L)$ as $\kerat(D_u)$ is nonempty (recall that $\kerat(D_u)$ contains only generators at $k$-grading $\wrap(L)$).
        If $\wrap(D_u) > \wrap(L)$, then we may consider the distinguished generator  
            \begin{equation}
            \label{eq:inkernelnotinimagev3}
                z = 
                v_+^{\otimes \wrap(D_u)}
                \otimes 
                \underbrace{w_-^{\otimes n_0}}_{\text{type $0$}}
                \otimes 
                \underbrace{w_\pm^{\otimes n_2}}_{\text{both}}
                \otimes 
                \underbrace{w_+^{\otimes n_1}}_{\text{type $1$}}.
            \end{equation}
        For every cobordism with source $D_u$, the induced map takes $z$ to $0$. 
        Moreover, for every cobordism with target $D_u$, the induced map does not contain $z$ in its image. 
        This means that 
            \[(0, ..., 0, z, 0, ..., 0) \in \CAKh^{|u|}(D)\]
        generates a non-zero homology class in $\AKh(L; \wrap(D_u))$, contradicting the fact that $\AKh(L)$ is supported in $k$-gradings less than or equal to $\wrap(D_u)$.
        Therefore, $\wrap(D_u) = \wrap(L)$ must hold, implying that $D_u$ is perfectly wrapped and uniform by definition. 

        The final claims are fairly immediate; only distinguished generators in $\CAKh(D_u)$ of the form in \eqref{eq:inkernelnotinimagev2} are both in the null set of $D_u$ and not in the target set of $D_u$. 
        Hence, $\kerat(D_u) \setminus \imgat(D_u)$ contains exactly the generators of the form \eqref{eq:inkernelnotinimagev2}.
        An application of Lemma \ref{lma:localtoglobalakh} completes the proof, showing Conjecture \ref{con:catwrapcon} for $L$.
    \end{proof}

    \begin{corollary}
        \label{cor:lowerboundforwraplrank}
        Let $L$ be an annular link. 
        Define 
            \[U(L) = \max_{\mathrm{diagrams}~D}\#(\mathrm{perfectly~wrapped~uniform~ resolutions~of}~D).\]
        Then
            \[\rank \AKh(L; \wrap(L)) \geq U(L).\]
    \end{corollary}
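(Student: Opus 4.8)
The plan is to leverage Theorem \ref{thm:perfectlywrappeduniform}: for a fixed diagram $D$, each perfectly wrapped uniform resolution produces an explicit distinguished generator representing a nonzero class in $\AKh(L;\wrap(L))$, and I would show that, across all such resolutions of $D$, these classes are \emph{linearly independent}. This gives $\rank \AKh(L;\wrap(L)) \geq \#(\text{perfectly wrapped uniform resolutions of }D)$ for every diagram $D$; since the left-hand side is an invariant of $L$ independent of $D$ (Remark \ref{rem:grading-shifts}), maximizing over diagrams then yields $\rank \AKh(L;\wrap(L)) \geq U(L)$.

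First I would enumerate the distinct perfectly wrapped uniform resolutions $D_{u_1}, \dots, D_{u_m}$ of $D$, noting that they correspond to pairwise distinct vertices $u_1, \dots, u_m \in \{0,1\}^n$ of the cube of resolutions, so the summands $\CAKh(D_{u_i})$ are genuinely distinct inside $\CAKh(D)$. By the forward implication of Theorem \ref{thm:perfectlywrappeduniform}, each set $\kerat(D_{u_i}) \setminus \imgat(D_{u_i})$ is nonempty, and I pick some $x_i$ in it. Each $x_i$ sits at $\grk = \wrap(L)$, and since the differential preserves the $k$-grading while $x_i$ is the source of no arrow at $\grk = \wrap(L)$, the element $\xi_i := (0,\dots,0,x_i,0,\dots,0) \in \CAKh^{|u_i|}(D)$, with $x_i$ placed in the $D_{u_i}$-summand, is a cycle.

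The heart of the argument is the independence of $[\xi_1], \dots, [\xi_m]$ in $\AKh(L;\wrap(L))$. Suppose $\sum_i a_i \xi_i = d(\eta)$ for some chain $\eta$. For fixed $i$, pairing both sides with $x_i$ in the distinguished basis gives $\langle \sum_j a_j \xi_j, x_i\rangle = a_i$, because $\xi_j$ is supported in the $D_{u_j}$-summand and the $u_j$ are distinct. On the other hand, the $x_i$-coefficient of $d(\eta)$ only receives contributions from edges $v \prec_1 u_i$, and for any distinguished generator $g$ of $\CAKh(D_v)$ one has $\langle d_{v,u_i}(g), x_i\rangle = 0$: a nonzero value would exhibit an arrow $g \to x_i$, which by $k$-grading preservation is an arrow at $\grk = \wrap(L)$, contradicting $x_i \notin \imgat(D_{u_i})$. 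Hence $\langle d(\eta), x_i\rangle = 0$, so $a_i = 0$ for every $i$. Since homology is additive over the homological grading, $\rank \AKh(L;\wrap(L)) = \sum_j \rank \AKh^j(L;\wrap(L)) \geq m$, and taking the maximum over all diagrams $D$ of $L$ finishes the proof.

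The step I expect to be the crux is precisely this independence argument — confirming that $x_i \notin \imgat(D_{u_i})$ kills the $x_i$-component of \emph{every} boundary (via $k$-grading preservation), so that the $\xi_i$ are not merely individually nontrivial in homology but jointly independent. Everything else — existence of the $\xi_i$, their cycle property, and their non-bounding property individually — is immediate from Theorem \ref{thm:perfectlywrappeduniform} and Lemma \ref{lma:localtoglobalakh}, and the reduction from $U(L)$ to a per-diagram count is routine bookkeeping with the cube of resolutions.
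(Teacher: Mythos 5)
Your proof is correct and follows essentially the same route as the paper: use Theorem \ref{thm:perfectlywrappeduniform} to extract a distinguished generator $x_i \in \kerat(D_{u_i})\setminus\imgat(D_{u_i})$ from each perfectly wrapped uniform resolution, and use the $\imgat$ condition together with $k$-grading preservation to see that these classes survive in $\AKh(L;\wrap(L))$. The only difference is one of detail: the paper asserts that the resulting classes are distinct and each generates a $\Z$, while you explicitly verify the joint linear independence (no $\Z$-linear combination of the $\xi_i$ can be a boundary), which is a welcome elaboration of the same mechanism used in Lemma \ref{lma:localtoglobalakh}.
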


    \begin{proof}
        From Theorem \ref{thm:perfectlywrappeduniform} and Lemma \ref{lma:localtoglobalakh}, every perfectly wrapped uniform resolution corresponds to at least one distinct homology class in $\AKh(L; \wrap(L))$, generating $\Z$.  
    \end{proof}

    \begin{corollary}
        \label{cor:nonexistenceofpwur}
        If $\AKh(L; \wrap(L)) / \mathrm{Tors} \cong 0$, then $U(L) = 0$. 
        Namely, $L$ does not admit a perfectly wrapped uniform resolution. 
    \end{corollary}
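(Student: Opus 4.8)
The plan is to obtain this as a direct consequence of Corollary \ref{cor:lowerboundforwraplrank}, arguing by contraposition. Recall that $U(L)$ is defined as a maximum of cardinalities of certain sets of resolutions, and is therefore a nonnegative integer, while the hypothesis $\AKh(L; \wrap(L)) / \mathrm{Tors} \cong 0$ says precisely that the free $\Z$-rank of $\AKh(L; \wrap(L))$ vanishes, i.e.\ $\rank \AKh(L; \wrap(L)) = 0$.

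First I would invoke Corollary \ref{cor:lowerboundforwraplrank} to get $0 = \rank \AKh(L; \wrap(L)) \geq U(L)$. Since $U(L) \geq 0$ always holds, this forces $U(L) = 0$. Then I would unwind the definition of $U(L)$: the equality $U(L) = 0$ means that no diagram $D$ for $L$ admits any perfectly wrapped uniform resolution, which is exactly the second assertion of the corollary.

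The argument has essentially no obstacle of its own; all of the content is already packaged in Corollary \ref{cor:lowerboundforwraplrank}, which in turn rests on Theorem \ref{thm:perfectlywrappeduniform} --- namely, the fact that each perfectly wrapped uniform resolution contributes a distinguished generator spanning a free $\Z$-summand of $\AKh(L; \wrap(L))$, and hence raises the free rank by at least one. The only point worth stating carefully is that the hypothesis ``$\AKh(L; \wrap(L))/\mathrm{Tors} \cong 0$'' is being interpreted as ``free rank zero'', so that it is directly comparable to the lower bound $U(L)$ of Corollary \ref{cor:lowerboundforwraplrank}; once that identification is made, the inequality does all the work.
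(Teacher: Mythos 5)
Your argument is correct and is exactly the intended one: the paper gives no separate proof for this corollary, treating it as the immediate contrapositive of Corollary \ref{cor:lowerboundforwraplrank}, just as you do. Your only added remark --- interpreting $\AKh(L;\wrap(L))/\mathrm{Tors} \cong 0$ as vanishing free rank so it compares with the bound $\rank \AKh(L;\wrap(L)) \geq U(L)$ --- is the right (and only) point to make explicit.
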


\subsection{An expanded class of diagrams admitting perfectly wrapped uniform resolutions} \label{subsec:expandingclassofpwur}

    We will now work to identify a general case in which a perfectly wrapped resolution can be converted into a perfectly wrapped uniform resolution; the motivation is alternating links, which we will handle in Section \ref{sec:classes}. 
    
    Recalling our description of arcs from Subsection \ref{subsec:cubeofres}, we will be specific as to the location of these arcs. 

    \begin{definition}
        \label{def:intextarc}
        Fix a trivial circle $W$ in $D_u$.
        By the Jordan curve theorem, $A \setminus W$ consists of two connected components.
        We say a crossing resolution, or equivalently, an arc, $a$ is \textit{$W$-exterior} if it is in the same connected component as the $*$, and \textit{$W$-interior} otherwise.
        Additionally, we refer to $a$ as \textit{$W$-adjacent-exterior} (resp. \textit{$W$-adjacent-interior}) if $a$ is $W$-exterior (resp. $W$-interior) and abuts $W$.
    \end{definition}
    
    A different set of desirable trivial circles can now be defined.

    \begin{definition}
        \label{def:type01type10}
        We say a trivial circle $W$ is \textit{of type $(0,1)$} if every $W$-adjacent-interior arc is red, and every $W$-adjacent-exterior arc is blue. 
        Similarly, $W$ is \textit{of type $(1,0)$} if every $W$-adjacent-interior arc is blue, and every $W$-adjacent-exterior arc is red. 
    \end{definition}

    \begin{definition}
        \label{def:almostuniform}
        A resolution $D_u$ of a diagram $D$ for an annular link $L$ is \textit{almost uniform} if every trivial circle in $D_u$ is of type $(0,1)$ or type $(1,0)$.
    \end{definition}

    Note that an almost uniform trivial circle is uniform if and only if it has no adjacent-interior arcs or no adjacent-exterior arcs. 
    
    Similar to Definition \ref{def:intextarc}, we want a notion of trivial circles on the interior and exterior of $W$. 

    \begin{definition}
        \label{def:intexttrivcircle}
        Let $W$ be a trivial circle in a resolution $D_u$.
        We say a trivial circle $W'$ is an \textit{exterior trivial circle} of $W$ if $W'$ is contained in the same connected component of $A \setminus W$ as the $*$.
        Otherwise, $W'$ is an \textit{interior trivial circle} of $W$.
    \end{definition}

    The matter of finding a perfectly wrapped uniform resolution from a perfectly wrapped almost uniform resolution comes down to analyzing interior trivial circles. 
    To make these notions precise, we establish an intuitive idea of depth.

    \begin{definition}
        \label{def:depth}
        Let $D_u$ be a resolution of a diagram $D$, and let $W \subset D_u$ be a trivial circle. 
        Let $a$ be a point just outside $W$.
        The \emph{depth} of $W$ in $D_u$, denoted by $\dep(W)$, is the minimal geometric intersection of a curve $\gamma$ connecting $a$ to $\infty$ and the set of trivial circles in $D_u$.
    \end{definition} 

    In other words, the depth of a trivial circle $W$ is the number of trivial circles containing $W$, not counting $W$.

    Finally Figure \ref{fig:nugatorycrossing} illustrates the difference between two types of nugatory crossings (for diagrams in the plane) in the presence of the marked point {\color{red}$*$}.
    The terms \textit{removable nugatory crossing} and \textit{essential nugatory crossing} come from Page 10 of \cite{HansHomayun-generalizedtait}.

    \begin{figure}
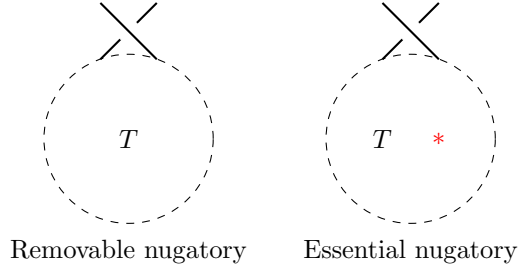

        \includestandalone{./paper-images/nugatory}
        \caption{A removable nugatory crossing can be eliminated via Reidemeister moves representing an isotopy supported within the solid torus.}
        \label{fig:nugatorycrossing}
    \end{figure}

    \begin{theorem}
        \label{thm:expandingclass}
        Suppose that $L$ admits a diagram $D$ without removable nugatory crossings, where $D$ has a perfectly wrapped almost uniform resolution $D_u$. 
        Then $D$ admits a perfectly wrapped uniform resolution $D_v$, and in particular, $L$ satisfies Conjecture \ref{con:catwrapcon}.
    \end{theorem}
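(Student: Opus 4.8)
The plan is to set up an induction that, starting from the given perfectly wrapped almost uniform resolution $D_u$, repeatedly passes to a perfectly wrapped almost uniform resolution with strictly fewer non-uniform trivial circles, terminating at a perfectly wrapped uniform resolution $D_v$; Theorem~\ref{thm:perfectlywrappeduniform} then yields the statement about Conjecture~\ref{con:catwrapcon}. Call a trivial circle \emph{bad} if it is of type $(0,1)$ or type $(1,0)$ but not uniform — equivalently, if it has both an adjacent-interior and an adjacent-exterior arc — and let $b(D_u)$ be the number of bad circles. If $b(D_u)=0$ then every trivial circle is type $0$ or type $1$, so $D_u$ is already uniform and we are done. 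Otherwise choose a bad circle $W$ of maximal depth $\dep(W)$, so that every trivial circle strictly inside $W$ is uniform. Reflecting the plane in a line through the marked point interchanges the two smoothings of every crossing, hence swaps types $(0,1)$ and $(1,0)$ while preserving ``exactly wrapped'', ``insulated'', ``almost uniform'' and ``no removable nugatory crossings''; so without loss of generality $W$ is of type $(0,1)$, meaning all $W$-adjacent-interior arcs are red and all $W$-adjacent-exterior arcs are blue.

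The move is to toggle the resolution bit at every crossing whose surgery arc lies in the closed disk $\bar\Delta$ bounded by $W$ (the closure of the component of $A\setminus W$ away from the marked point): that is, at every crossing lying strictly inside $W$ and at every $W$-adjacent-interior crossing, leaving all other crossings alone. Call the result $D_v$. I would verify three things. First, $D_v$ is exactly wrapped: no essential circle of $D_u$ meets $\bar\Delta$, so none of them is altered, and nothing inside $\bar\Delta$ can become essential, so the number of nontrivial circles is unchanged. Second, after the move the material of $W$ is a disjoint union of trivial circles, each of whose adjacent arcs is blue — its formerly red adjacent-interior arcs have become $1$-resolutions, while its blue adjacent-exterior arcs are untouched — hence each is type $1$; and every trivial circle strictly inside $W$ has \emph{all} of its adjacent arcs toggled, so it changes between type $0$ and type $1$ but remains uniform. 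Third, no trivial circle outside $\bar\Delta$ is touched, so each remains type $(0,1)$ or type $(1,0)$. Consequently $D_v$ is exactly wrapped and almost uniform with $b(D_v)\le b(D_u)-1$, since $W$'s replacements are all uniform.

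What remains, and what I expect to be the crux, is arranging that the resolution produced is also \emph{insulated} — this is the only place the hypothesis of no removable nugatory crossings should be needed. The move above need not preserve insulatedness on the nose: the dangerous configurations are trivial circles with \emph{self-adjacent} arcs, where toggling produces a $\wsplit$- or $\wmerge$-type neighbor, and a uniform resolution carrying such a configuration is precisely what a failure of insulatedness looks like. I would argue that, given an exactly wrapped uniform resolution that is not insulated, one of the resulting trivial circles bounds a disk in the annulus meeting the diagram $D$ in a single crossing lying in the interior of that disk; the associated untwisting is an isotopy supported in the solid torus, exhibiting a removable nugatory crossing of $D$ and contradicting the hypothesis. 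Hence the terminal resolution of the induction is insulated, i.e.\ perfectly wrapped and uniform, completing the proof. The hardest and most delicate point is this passage from a local failure of insulatedness to a genuine removable nugatory crossing of the original diagram $D$: one must relate ``nugatory in a resolution'' to ``nugatory in $D$'', and, crucially, verify that the separating curve can be taken inside the solid torus (so the crossing is removable rather than merely essential-nugatory), for which the planar position of $W$ relative to the marked point and the type-$(0,1)$ hypothesis both enter. Checking the cobordism bookkeeping in the three verifications above, against the six local pictures of Figure~\ref{fig:akh-diffs}, is routine by comparison.
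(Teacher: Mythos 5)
Your overall skeleton (repeatedly recolor the crossings inside a non-uniform trivial circle to make its interior uniform, then invoke Theorem \ref{thm:perfectlywrappeduniform}) parallels the paper, and your three bookkeeping checks (exactly wrapped; the material of $W$ becomes type $1$ circles; circles outside $W$ are untouched) can indeed be verified, modulo a small definitional slip: a uniform circle with arcs on both sides is neither type $(0,1)$ nor type $(1,0)$, so your invariant should be ``every trivial circle is uniform or of type $(0,1)$/$(1,0)$'' rather than ``almost uniform.'' The genuine gap is exactly where you flagged it: the claim that an exactly wrapped \emph{uniform} resolution of a diagram without removable nugatory crossings is automatically insulated (equivalently, that a failure of insulatedness produces a trivial circle bounding a disk meeting $D$ in a single crossing) is false. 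A self-abutting arc on a resolution circle does not force the corresponding crossing of $D$ to be nugatory, because the other crossings attached to that circle can connect its two lobes through the rest of the diagram. Concretely: let $C$ be a round circle with one chord $a$ inside it, and let $X$ be a circle outside $C$ joined to the two lobes of $C$ by arcs $b_1,b_2$; un-smoothing $a,b_1,b_2$ gives a $3$-crossing diagram $D$ in a disk away from $*$ with no nugatory crossings at all (deleting any crossing leaves the projection connected), yet its all-$0$ resolution is exactly wrapped ($\wrap=0$) and uniform (both circles are type $0$) but not insulated, since switching $a$ is an outgoing $\wsplit$. So ``terminal uniform $\Rightarrow$ insulated'' cannot be deduced from the no-removable-nugatory-crossing hypothesis alone, and your induction, which (as you note) does not preserve insulatedness along the way, has no mechanism to rule out ending at such a resolution.

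This is precisely why the paper's argument is structured differently: it only switches interior \emph{red} arcs to blue (never toggles interior blue arcs), keeps all exterior arcs fixed, and then proves insulatedness of each intermediate resolution directly. In that case analysis the removable-nugatory hypothesis disposes only of the degenerate configurations (cases (B-1), (R-1)); all remaining configurations of a self-abutting arc are excluded using the almost-uniformity of the original $D_u$ transported into the new subresolution (cases (B-2), (B-3), (R-2), (R-3)). Your toggle-everything move discards exactly this control: after toggling, the interior contains both colors of arcs and the relation to the type-$(0,1)$/$(1,0)$ structure of $D_u$ is lost, so the insulatedness argument cannot be run locally either. To repair your proof you would need to replace the false global claim with a local analysis of the resolutions your move actually produces, at which point you are essentially reconstructing the paper's case analysis.
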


    \begin{proof}
        Let $W_1, ..., W_k$ denote the set of trivial circles in $D_u$ such that $\dep(W_i) = 0$. 
        To construct the resolution $D_v$, we will build a series of perfectly wrapped resolutions $D_{u_1}, ..., D_{u_k},$ each of which is progressively more uniform. 

        We start with $W_1 \subseteq D_u$.
        Suppose that $W_1$ is type $(0,1)$. 
        Let $D_{u_1}$ be the resolution where every $W_1$-exterior arc is unchanged and every $W_1$-interior red arc is switched to a blue arc.
        The resulting resolution is similar to $D_u$, except that $W_1$ and each of its interior trivial circles have been replaced by a collection of type $1$  circles; since the $*$ is outside $W_1$, these new circles are indeed trivial.
        Let $\ww D_{u_1}$ be the local `subresolution' consisting of this new collection of type $1$ trivial circles, along with all arcs that were $W_1$-interior in $D_u$. 

        Switching the color of a $W_1$-interior arc must induce a cobordism $\wsplit$ or $\wmerge$, since these arcs are, by definition, on the interior of a trivial circle.
        Hence, $D_{u_1}$ is exactly wrapped, as it is related to $D_u$ by cobordisms which preserve the number of nontrivial circles.
        
        We claim that $D_{u_1}$ is in fact perfectly wrapped.
        As $D_u$ is insulated, every crossing resolution in $D_u$ is of one of the forms below:

        \begin{itemize}
            \item[(I-0)] If $c$ is a $0$-resolution, then switching $c$ to a $1$-resolution induces a cobordism of the form $\wmerge, \mixmerge, \veemerge,$ or $\veesplit$.
            \item[(I-1)] If $c$ is a $1$-resolution, then switching $c$ to a $0$-resolution induces a cobordism of the form $\mathcal{W} \leftarrow \mathcal{W} \sqcup \mathcal{W}, \mathcal{V} \leftarrow \mathcal{V} \sqcup \mathcal{W}, \mathcal{W} \leftarrow \mathcal{V} \sqcup \mathcal{V},$ or $ \mathcal{V} \sqcup \mathcal{V} \leftarrow \mathcal{W}$.
        \end{itemize}
        
        If this is the case for each crossing resolution in $D_{u_1}$, then $D_{u_1}$ is insulated, and thus perfectly wrapped by definition. 
        Note that a crossing resolution that was $W_1$-exterior in $D_u$ remains the same color in $D_{u_1}$ and still satisfies either (I-0) or (I-1); 
        in particular, the $W_1$-adjacent-exterior crossing resolutions are still merge maps.
        Therefore, it suffices to show that $\ww D_{u_1}$ is insulated. 
        This will cover the case of all remaining arcs. 

        Each arc $c'$ in $\ww D_{u_1}$ is blue and abuts only trivial circles. 
        If $c'$ abuts two different trivial circles, then switching this resolution induces a cobordism $\mathcal{W} \leftarrow \mathcal{W} \sqcup \mathcal{W}$.
        Thus, for $D_{u_1}$ to be insulated, it remains to show that a given $c'$ may not abut only one trivial circle.

        Suppose for contradiction that there exists a resolution $p'$ in $D_{u_1}$, corresponding to some $W_1$-interior resolution $p$ in $D_u$, such that $p'$ abuts only one type $1$ trivial circle $X$ in $\ww D_{u_1}$.
        This phenomenon is detailed in Figure \ref{fig:not-perfectly-wrapped}.
        
        \begin{figure}
            \centering
            \labellist
                \pinlabel $X$ at -25 100
                \pinlabel $q_m'$ at 225 160
                \pinlabel $r_n'$ at 175 120
                \pinlabel $r_1'$ at 215 70
                \pinlabel $q_1'$ at 170 10
                \pinlabel $p'$ at 250 95
            \endlabellist
            \includegraphics[width=0.5\linewidth]{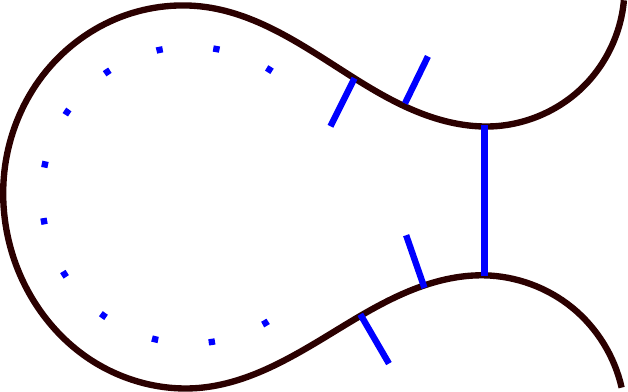}
            \caption{$p'$ is a crossing resolution in $D_{u_1}$ abutting $X$ alone.}     
            \label{fig:not-perfectly-wrapped}
        \end{figure}
        
        For the crossing resolutions $q_i'$ and $r_i'$ in $D_{u_1}$, let $q_i$ and $r_i$ denote the respective crossing resolutions in $D_u$. 
        We now have two cases: $p$ is blue, or $p$ is red. Figure \ref{fig:case1pisblue} pictures the first case. 

        \begin{figure}
            \centering
            \labellist
                \pinlabel $p$ at 130 85
                \pinlabel $X_1$ at 117 20
                \pinlabel $X_2$ at 117 155
            \endlabellist
            \includegraphics[width=0.35\linewidth]{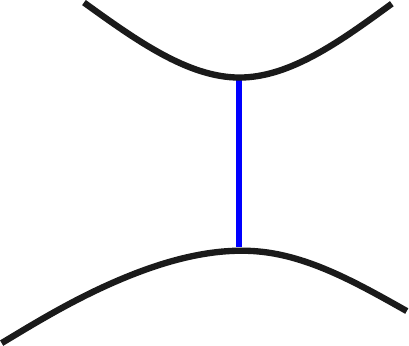}
            \caption{$p$ is a blue arc in $D_u$}
            \label{fig:case1pisblue}
        \end{figure}

        The arc $p$ is $W_1$-interior, and thus switching $p$ cannot induce a cobordism $\mathcal{V} \sqcup \mathcal{V} \leftarrow \mathcal{W}$.
        Therefore, as $D_u$ is perfectly wrapped, it must be the case that $X_1$ and $X_2$ are different trivial circles.
        
        \begin{enumerate}
            \item[(B-1)] If $m = 0$ and $n \geq 0$, then $p$ is the resolution of a removable nugatory crossing, contradicting our original assumption.
            \item[(B-2)] If $m > 0$ and $n = 0$, then $q_1$ must abut the trivial circle $X_1$.
            By almost uniformity of $D_u$, we must either have (1) $q_1$ is blue and on the same side of $X_1$ as $p$ or (2) $q_1$ is red and on the opposite side of $X_1$ as $p$. 
            In either case, we see that $q_1'$ is blue and on the same side of $X_1$ as $p$, contradicting our definition of the $q_i$ arcs. 

            \item[(B-3)] If $m > 0$ and $n > 0$, then we move clockwise around $X$ in $D_{u_1}$. 
            If $q_1$ immediately succeeds $p$ in clockwise order, then we observe the same contradiction as in Case (B-2). 
            Suppose otherwise that $r_1, ..., r_i$ succeed $p$ and precede $q_1$. If $r_i$ is blue, then we obtain the same contradiction as Case (B-2). 
            Therefore, $r_i$ is red. But then, as $D_u$ is almost uniform, $q_1$ is red as well.
            In Figure \ref{fig:secondcontradictionfromq1}, we observe that this as a contradiction.

            \begin{figure}
                \centering
                \labellist
                    \pinlabel $p$ at 230 200
                    \pinlabel $X_1$ at 217 140
                    \pinlabel $X_2$ at 217 265
                    \pinlabel $q_1$ at 30 80
                    \pinlabel $r_i$ at 75 175
                \endlabellist
                \includegraphics[width=0.45\linewidth]{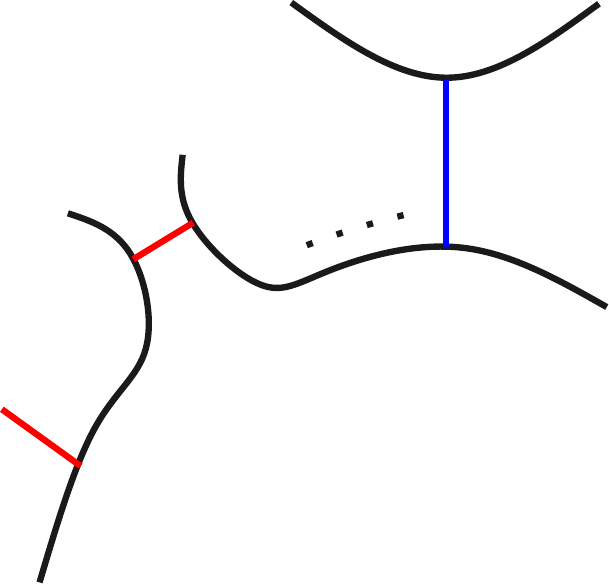}
                \caption{$q_1$ and $r_i$ are on opposite sides of a trivial circle, but are colored the same, so this trivial circle can be neither type $(0,1)$ nor type $(1,0)$}
                \label{fig:secondcontradictionfromq1}
            \end{figure}       
            
        \end{enumerate}
    
        It follows $p$ must be red, as depicted in Figure \ref{fig:case2pisred}. 
        As before, $X_1$ and $X_2$ are distinct trivial circles; we think of $X_1$ as a trivial circle that abuts some $q_i$ or $r_i$. 

        \begin{figure}
            \centering
            \labellist
                \pinlabel $p$ at 160 200
                \pinlabel $X_1$ at 65 175
                \pinlabel $X_2$ at 270 175
            \endlabellist
            \includegraphics[width=0.3\linewidth]{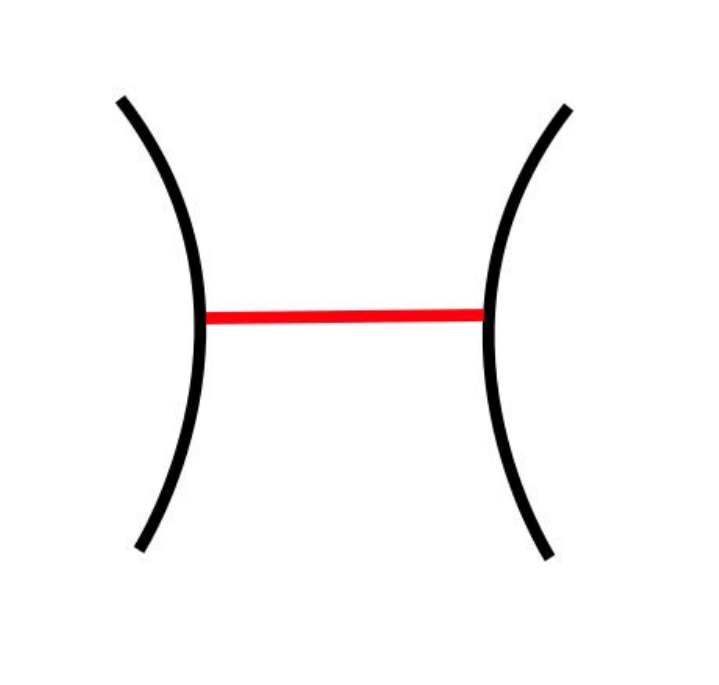}
            \caption{$p$ is red}
            \label{fig:case2pisred}
        \end{figure}
        
        We proceed in an similar fashion.

        \begin{enumerate}
            \item[(R-1)] If $m = 0$ and $n \geq 0$, then $p$ represents the resolution of a removable nugatory crossing.
            \item[(R-2)] If $m > 0$ and $n = 0$, then $q_1$ must be red. 
            We see that this conflicts with the fact that $D_u$ is almost uniform, as $p$ and $q_1$ are on opposite sides of a trivial circle but are both red. 
            \item[(R-3)] If $m > 0$ and $n > 0$, then we proceed as in Case (B-3): we consider crossings in a clockwise fashion.
            If $q_1$ immediately proceeds $p$ in clockwise order, then the argument of Case (R-2) may be used. 
            Suppose otherwise that $r_1, ..., r_j$ succeed $p$ and precede $q_1$.
            If $r_1, ..., r_j$ are all blue arcs, then $q_1$ must be red (otherwise $q_1$ and $p$ are on the same side of $X_1$, but oppositely colored).
            However, this means that $q_1$ and $r_j$ are on the same side of $X_1$, but are oppositely colored.
            
            Hence, some $r_l$ must be red; we may let $r_l$ be the red arc in $D_u$ which is most advanced in clockwise order out of $r_1, ..., r_j$.
            Then $r_l, ..., r_j$ are blue arcs; the contradiction follows as in Case (R-2) once again.
        \end{enumerate}

        We conclude that $D_{u_1}$ is perfectly wrapped. 
        In summary, our process has resulted in a perfectly wrapped resolution, identical to $D_u$ aside from the insertion of $\ww D_{u_1}$ in place of $W_1$ and its interior trivial circles.

        Next, consider $W_2$.
        $W_2$ is a trivial circle in $D_u$ with $\dep(W_2) = 0$.
        The algorithm we performed in $D_u$ has been conducted away from $W_2$, so $W_2$ corresponds to a trivial circle in $D_{u_1}$ (which we will also call $W_2$); furthermore, this trivial circle is not a circle in $\ww D_{u_1}$ by the same reasoning. 
        Therefore, $W_2$ is either type $(0,1)$ or type $(1,0)$. 
        
        If $W_2$ is type $(0,1)$, we repeat our previous process. 
        If $W_2$ is type $(1,0)$, we can use a completely analogous argument (leave $W_2$-exterior arcs the same and switch $W_2$-interior blue arcs to red arcs) to replace $W_2$ and its interior trivial circles by a collection of type $0$ trivial circles.
        We obtain a perfectly wrapped diagram $D_{u_2}$ with a replacement for $W_2$.

        We now repeat this procedure.
        With only finitely many trivial circles to replace, it terminates in a perfectly wrapped resolution $D_v$, where every trivial circle is contained in some collection of type $1$ or type $0$ trivial circles.
        Thus, $D_v$ is perfectly wrapped and uniform, as desired.
    \end{proof}

    The following corollary is implicit in the previous argument, but we state and prove it for emphasis. 

    \begin{corollary}
        \label{cor:slightfurtherextension}
        Suppose that $L$ admits a diagram $D$ without removable nugatory crossings, along with a perfectly wrapped resolution $D_u$. For depth $0$ trivial circles in $W_1, ..., W_n$, suppose that there exist a collection of disks $U_1, ..., U_n$ in the annulus such that
        \begin{enumerate}
            \item $U_i$ contains $W_i$, 
            \item $U_i \cap U_j = \emptyset$ for $i \neq j$, and 
            \item Within each $U_i$, $D_u$ is either almost uniform or uniform.
        \end{enumerate}
        Then there exists a perfectly wrapped uniform resolution of $D$.
    \end{corollary}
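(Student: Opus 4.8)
The plan is to localize the proof of Theorem~\ref{thm:expandingclass}, running it independently inside each $U_i$. First I would record the geometric facts that make this localization legitimate. Since $U_i$ is an embedded disk containing the simple closed curve $W_i$, it necessarily contains the entire planar disk bounded by $W_i$; hence it contains every $W_i$-interior arc and every interior trivial circle of $W_i$. Since the depth-$0$ trivial circles $W_1,\dots,W_n$ of $D_u$ are pairwise unnested, the disks they bound are disjoint, and every trivial circle of $D_u$ (including those of positive depth) is contained in the disk bounded by the unique $W_i$ that is outermost among the trivial circles containing it; in particular every trivial circle of $D_u$ lies in exactly one $U_i$, and every arc that could ever be recolored below is $W_i$-interior for a unique $i$, hence lies in $U_i$.

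I would then build $D_v$ by processing $U_1,\dots,U_n$ in order, producing perfectly wrapped resolutions $D_{u_0}=D_u,\,D_{u_1},\dots,D_{u_n}=D_v$. At step $i$, if $D_u$ is uniform within $U_i$ then $W_i$ and all its interior trivial circles are already type $0$ or type $1$, so I leave $U_i$ alone and set $D_{u_i}=D_{u_{i-1}}$. If $D_u$ is almost uniform within $U_i$ then $W_i$ is type $(0,1)$ or type $(1,0)$, and I perform the recoloring of the proof of Theorem~\ref{thm:expandingclass}: when $W_i$ is type $(0,1)$, switch every $W_i$-interior red arc to blue, replacing $W_i$ and its interior trivial circles by a collection $\ww D_{u_i}$ of type $1$ trivial circles, and mirror this with type $0$ circles when $W_i$ is type $(1,0)$. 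Because this recoloring is supported in $U_i$, and the $U_i$ are disjoint, the steps at distinct indices do not interact: after steps $1,\dots,i-1$ the circle $W_i$ together with its interior is untouched inside $U_i$, so the hypothesis on $U_i$ still holds in $D_{u_{i-1}}$ and the argument at step $i$ applies there.

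The substantive point is that each step preserves perfect wrappedness, and here the proof of Theorem~\ref{thm:expandingclass} is reused essentially verbatim. The verification that $\ww D_{u_i}$ is insulated -- the contradiction cases (B-1)--(B-3) and (R-1)--(R-3) together with the analysis of the subresolution -- only ever invokes the recolored arc $p$, the two trivial circles on either side of $p$, and the neighboring arcs $q_j,r_j$ abutting the new circle $X$; all of these are $W_i$-interior and therefore lie in $U_i$, where $D_u$ is almost uniform, so every appeal to the almost-uniformity hypothesis is supplied locally. The only external input remains the absence of removable nugatory crossings in $D$, used exactly in cases (B-1) and (R-1). Hence every $D_{u_i}$ is perfectly wrapped, and after all $n$ steps each trivial circle of $D_v$ lies in some $U_i$ and is type $0$ or type $1$ -- either because $U_i$ was already uniform, or because $W_i$'s blob was replaced by monochromatic circles -- so $D_v$ is perfectly wrapped and uniform, and Theorem~\ref{thm:perfectlywrappeduniform} yields Conjecture~\ref{con:catwrapcon} for $L$.

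The main obstacle, exactly as in Theorem~\ref{thm:expandingclass}, is bookkeeping: confirming that ``within $U_i$, $D_u$ is almost uniform'' really does cover every instance of the almost-uniformity hypothesis used in cases (B-2), (B-3), (R-2), (R-3), and that no step can create a crossing resolution abutting only one of the new trivial circles without forcing one of those contradictions or exhibiting a removable nugatory crossing. No new idea beyond Theorem~\ref{thm:expandingclass} should be needed; the content is in restricting each use of the hypothesis to the disk where it is available.
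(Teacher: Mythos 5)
Your proposal is correct and follows essentially the same route as the paper: the paper's (much terser) proof likewise runs the algorithm of Theorem~\ref{thm:expandingclass} locally inside each almost uniform $U_i$, using disjointness of the disks so the local modifications do not interact, and leaves the uniform disks alone. The only point the paper makes explicit that you leave implicit is that if $W_i$ abuts $W_j$ their adjacent-exterior arcs share a color by condition (3), but this consistency is automatic and your blob-by-blob uniformity argument does not actually require it as a separate step.
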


    \begin{proof}
        Condition (1) implies that every trivial circle in $D_u$ is contained in some disk. 
        Moreover, if some $W_i$ abuts some $W_j$, they must have the same color of exterior arcs by condition (3). 
        Finally, by condition (2), we can perform the algorithm of Theorem \ref{thm:expandingclass} within every $U_i$ for which $D_u$ is almost uniform. 
        The resulting resolution is perfectly wrapped and uniform. 
    \end{proof}

\section{Classes of links admitting perfectly wrapped uniform resolutions} \label{sec:classes}

    \noindent 
    We may now investigate certain links which adhere to the conditions of Theorem \ref{thm:perfectlywrappeduniform}, and thereby satisfy Conjecture \ref{con:catwrapcon}. 

\subsection{Alternating links} \label{subsec:altlinks}

    As mentioned before, Subsection \ref{subsec:expandingclassofpwur}, and specifically Definition \ref{def:almostuniform}, was motivated by alternating links.
    With Lemma \ref{lma:alternatinglinksalmostuniform}, we see how type $(0,1)$ and $(1,0)$ circles arise naturally as a corollary of alternating phenomena.

    \begin{lemma}
        \label{lma:alternatinglinksalmostuniform}
        Let $D$ be an alternating diagram for an annular link $L$. 
        Let $D_u$ be a resolution of $D$. 
        Then any trivial circle $W$ in $D_u$ is either of type $(0,1)$ or type $(1,0)$.
    \end{lemma}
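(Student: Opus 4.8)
The plan is to read off the type of a trivial circle directly from the checkerboard coloring of the alternating diagram, using nothing beyond that except the Jordan curve theorem. First I would set up colorings. The regions of $\R^2 \setminus D$ admit a checkerboard $2$-coloring, and since two regions meeting at a crossing only at that crossing point receive the same color, resolving any crossing merges two regions of one color; hence each resolution $D_u$ inherits a checkerboard coloring of $\R^2 \setminus D_u$ from the one on $D$. The alternating hypothesis enters through the classical fact that one may choose the coloring on $D$ so that at \emph{every} crossing the $0$-resolution merges the two regions of one fixed color --- say black --- equivalently, the all-$0$ resolution $D_0$ is a checkerboard state. I would justify this by tracing along an edge of the underlying $4$-valent graph between two consecutive crossings: the alternating condition interchanges over- and under-strand along the edge, which is exactly what is needed for a chosen incident region to keep its status as ``the region the $0$-resolution merges'', and connectedness then propagates the choice to all crossings (for disconnected $D$ one treats the components independently).

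With the coloring fixed, I would record the key local observation: in any $D_u$, the red surgery arc at a $0$-resolved crossing lies in the region that the $0$-resolution merges, which is black; dually, the blue arc at a $1$-resolved crossing lies in the region its smoothing merges, namely the white pair of regions that the $0$-resolution keeps apart. So, for the inherited coloring on $D_u$, every red arc lies in a black region and every blue arc lies in a white region.

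Finally I would run the Jordan curve bookkeeping for a fixed trivial circle $W \subset D_u$. A sufficiently small push-off of $W$ into either side of $A \setminus W$ stays within a single complementary region of $D_u$ (the circles of $D_u$ are closed and disjoint from $W$, and one can route around any arcs), so there is a well-defined region $R_{\mathrm{int}}$ adjacent to $W$ from its interior and a region $R_{\mathrm{ext}}$ adjacent from its exterior; these two regions have opposite colors. Any surgery arc abutting $W$ lies entirely in one complementary region of $D_u$, so every $W$-adjacent-interior arc lies in $R_{\mathrm{int}}$ and every $W$-adjacent-exterior arc lies in $R_{\mathrm{ext}}$. If $R_{\mathrm{int}}$ is black, then by the key observation no $W$-adjacent-interior arc can be blue and no $W$-adjacent-exterior arc can be red, so $W$ is of type $(0,1)$ in the sense of Definition~\ref{def:type01type10}; if $R_{\mathrm{int}}$ is white, the same reasoning gives type $(1,0)$.

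The step I expect to be the real content is the consistency claim in the first paragraph: that for an alternating diagram the $0$-resolution is the ``black-merging'' smoothing simultaneously at every crossing. This is classical (it is essentially the reason alternating diagrams are adequate), but it is the only place the alternating hypothesis is used; everything after it is general-position bookkeeping with planar circles and the checkerboard coloring they induce on $D_u$.
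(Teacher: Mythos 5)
Your proof is correct, but it runs along a genuinely different route from the paper's. The paper argues purely locally around the fixed trivial circle $W$: it lists the arcs $d_1,\dots,d_n$ abutting $W$ in clockwise order and uses the alternating condition, via a short case analysis with pictures, to show that consecutive arcs must agree in color when on the same side of $W$ and disagree when on opposite sides, which is exactly almost-uniformity. You instead invoke the checkerboard coloring and the classical fact that for an alternating diagram the $0$-smoothing merges the regions of one fixed color at every crossing (equivalently, the all-$0$ state is a checkerboard state), observe that this coloring descends to $\R^2\setminus D_u$ with red arcs in black regions and blue arcs in white regions, and then read off the type of $W$ from the color of its inner adjacent region $R_{\mathrm{int}}$. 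Your approach is more conceptual and replaces the paper's figure-chasing by one global invariant; the paper's is self-contained and never needs the Tait-coloring fact. One small point to tighten: for a split diagram the ``$A$-color'' can genuinely differ between connected components (e.g.\ a diagram containing an alternating knot and its mirror), so a single global choice making \emph{every} crossing's $0$-smoothing black-merging need not exist; but since all arcs abutting a fixed $W$ lie at crossings of the one component of $D$ whose strands form $W$, you may simply choose the coloring to suit that component when treating $W$, and your argument goes through verbatim. With that clarification the proposal is a complete proof of the lemma.
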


    \begin{proof}
        $W$ will have a collection $d_1, ..., d_n$ of arcs around it, which we may label in clockwise order (as in $d_{i+1}$ is clockwise from $d_{i}$ around $W$).
        
        We now look at any $d_j$ abutting $W$, and check that $d_{j+1}$ is either of identical color to $d_j$ if it is on the same side of $W$ as $d_j$, or is of opposite color to $d_j$ if it is on the opposite side of $W$ as $d_j$. 
        Suppose that if $d_j$ is exterior, it is red, and if $d_j$ is interior, it is blue; other cases follow completely analogously. 
        The alternating condition then forces almost uniformity, as we see in Figures \ref{fig:djredext} and \ref{fig:djblueint}. This concludes the proof.

        \begin{figure}
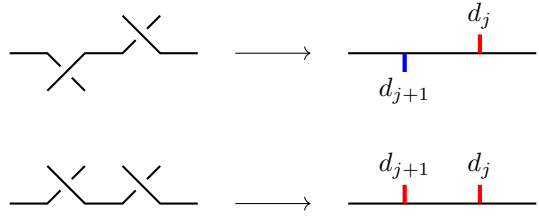

            \includestandalone{./paper-images/dj-is-red-and-exterior}
            \caption{$d_j$ is red and exterior}
            \label{fig:djredext}
        \end{figure}
        
        \begin{figure}
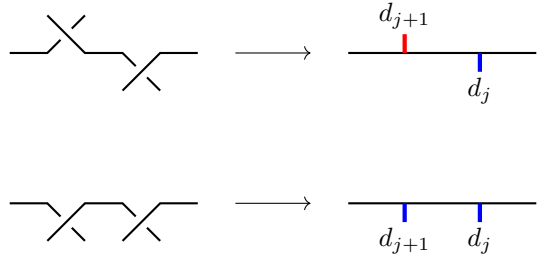

            \includestandalone{./paper-images/dj-is-blue-and-interior}
            \caption{$d_j$ is blue and interior}
            \label{fig:djblueint}
        \end{figure}
        
    \end{proof}

    \begin{definition}[Page 10, \cite{HansHomayun-generalizedtait}]
        \label{def:reducedalternatingannular}
        We say a diagram $D$ for an annular link $L$ is a \textit{reduced alternating diagram} if $D$ is alternating and has no removable nugatory crossings.
    \end{definition}

    \begin{corollary}
        \label{cor:alternatinglinkssatisfycwnc}
        Let $L$ be an alternating link. Then $L$ satisfies Conjecture \ref{con:catwrapcon}. 
    \end{corollary}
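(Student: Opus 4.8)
The plan is to chain together the three ingredients already assembled in this section: the existence of perfectly wrapped resolutions for any diagram, the observation that alternating diagrams force almost uniformity on every resolution, and the conversion result Theorem \ref{thm:expandingclass} from perfectly wrapped almost uniform resolutions to perfectly wrapped uniform ones. Each step is a direct citation; the only genuine work is a routine reduction at the start.

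First I would fix an alternating annular diagram $D$ for $L$ and reduce to the case that $D$ has no removable nugatory crossings, i.e.\ that $D$ is a reduced alternating diagram in the sense of Definition \ref{def:reducedalternatingannular}. If $D$ has a removable nugatory crossing, then by Figure \ref{fig:nugatorycrossing} it may be eliminated by an annular Reidemeister I move (an isotopy supported in the solid torus), producing another alternating annular diagram for $L$ with strictly smaller crossing number. Since the crossing number is finite, iterating terminates in a reduced alternating diagram $D'$ for $L$. As $D'$ still represents $L$, it suffices to prove Conjecture \ref{con:catwrapcon} for $L$ using $D'$, so I rename $D'$ to $D$ and assume henceforth that $D$ is reduced.

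Next I would produce the required resolution. By Lemma \ref{lma:existenceofperfectlywrappedres}, $D$ admits a perfectly wrapped resolution $D_u$. By Lemma \ref{lma:alternatinglinksalmostuniform}, since $D$ is alternating, every trivial circle in $D_u$ is of type $(0,1)$ or type $(1,0)$; hence $D_u$ is almost uniform in the sense of Definition \ref{def:almostuniform}. Thus $D$ is a diagram without removable nugatory crossings admitting a perfectly wrapped almost uniform resolution, so Theorem \ref{thm:expandingclass} applies verbatim: $D$ admits a perfectly wrapped uniform resolution $D_v$, and therefore $L$ satisfies Conjecture \ref{con:catwrapcon}. (Equivalently, Theorem \ref{thm:perfectlywrappeduniform} then exhibits an explicit distinguished generator of $\CAKh(D_v)$ spanning a nonzero class in $\AKh(L;\wrap(L))$.) The only point requiring any care is the opening reduction — checking that removing removable nugatory crossings preserves the alternating property and terminates — which I expect to be the mildest of obstacles, since the removal is a single Reidemeister I untwist that strictly decreases a finite, nonnegative integer. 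Everything past that step is mechanical assembly of the cited results.
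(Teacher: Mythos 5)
Your proposal is correct and follows essentially the same route as the paper's proof: pass to a reduced alternating diagram, invoke Lemma \ref{lma:existenceofperfectlywrappedres} for a perfectly wrapped resolution, Lemma \ref{lma:alternatinglinksalmostuniform} for almost uniformity, and conclude via Theorem \ref{thm:expandingclass} (and Theorem \ref{thm:perfectlywrappeduniform}). The only quibble is your opening reduction: eliminating a removable nugatory crossing is in general a $180^\circ$ rotation of the enclosed tangle (a sequence of Reidemeister moves supported in the solid torus, as in Figure \ref{fig:nugatorycrossing}), not a single Reidemeister I untwist, though alternation is indeed preserved --- the paper sidesteps this detail by citing the classical existence of reduced alternating diagrams in $S^3$.
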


    \begin{proof}
        In $S^3$, every nugatory crossing is removable, and it is a classical result that every alternating link in $S^3$ admits a reduced alternating diagram.
        From the $S^3$ case, we can conclude that an alternating annular link must admit a reduced alternating annular diagram.
        
        We take $D$ to be such a diagram for $L$.
        By Lemma \ref{lma:existenceofperfectlywrappedres}, there is a perfectly wrapped resolution $D_u$ of $D$, which by Lemma \ref{lma:alternatinglinksalmostuniform} is almost uniform. 
        Then an application of Theorem \ref{thm:expandingclass} to the diagram $D$ gives a perfectly wrapped uniform resolution. 
        Theorem \ref{thm:perfectlywrappeduniform} concludes the proof. 
    \end{proof}

\subsection{Other classes and modifications of links} \label{subsec:otherclasses}

    Beyond alternating links, there are immediate candidates for the existence of perfectly wrapped uniform resolutions which may exhibit arbitrarily non-alternating behavior.
    In particular, we are motivated by plumbed links, arborescent links, and other such classes; these display some of the phenomena we discuss below. 
    
    \begin{corollary}
        \label{cor:braidsandchains}
        Suppose that $T$ is a tangle which is braided outside of disjoint disks $U_1, ..., U_n$, each of which contain a positively or negatively oriented ``chain'' (or $2$-braid) and no other crossings.
        Then the annular closure of $T$, $\hat{T}$, admits a perfectly wrapped uniform resolution. 
    \end{corollary}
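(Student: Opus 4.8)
The strategy is to exhibit an explicit perfectly wrapped uniform resolution of the annular closure $\hat{T}$ by choosing resolutions cleverly inside and outside the exceptional disks $U_1, \ldots, U_n$. First I would resolve the braided part of $T$ so that the annular closure of the braid part, on its own, becomes a collection of $\wrap(\hat T)$ parallel nontrivial circles (this is the oriented resolution coming from the braid orientation: every crossing resolved in the way consistent with the braid's strand directions). Since $T$ is a braid outside the $U_i$, this oriented resolution produces only nontrivial circles in the braided region, and the strand count equals the braid index, which realizes $\wrap(\hat T)$ provided the diagram is chosen with no avoidable meridional intersections — and since each $U_i$ contains no crossings other than a single $2$-braid chain, resolving those chains cannot force extra wrapping.

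Next I would treat each disk $U_i$ separately. A positively or negatively oriented chain (a vertical stack of same-sign crossings between two strands) has two resolutions at each crossing; I would resolve every crossing of the chain in the $0$-smoothing (or uniformly in the $1$-smoothing, whichever is forced by the sign and the surrounding braid orientation) so that the chain contributes a single nested stack of small trivial circles, each bounded by red arcs only on its exterior-facing side and red arcs only on its interior-facing side — i.e.\ each such trivial circle is type $0$ (or, with the opposite choice, type $1$). The key point is that inside $U_i$ the $2$-braid has a product structure, so a uniform choice of smoothing along the whole chain yields concentric trivial circles, each of which abuts only one type of arc; this is precisely the uniformity condition of Definition \ref{def:uniform}. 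Picking the smoothing direction according to the sign of the chain guarantees these circles are genuinely trivial (the two strands of a same-sign $2$-braid, when smoothed the "wrong'' way, would merge into the ambient nontrivial circles; smoothing the "right'' way caps them off into small circles). Because each $U_i$ is disjoint from the others and from the braided region, these local choices do not interfere.

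Finally I would verify that the global resolution $D_u$ assembled this way is perfectly wrapped. It is exactly wrapped because only the oriented braid strands are nontrivial and their number is $\wrap(\hat T)$; one should double-check that no chain-resolution accidentally produces an extra nontrivial circle, which follows from the sign-compatible choice above. For the insulated condition (Definition \ref{def:insulated}), I would appeal to the machinery already set up: it suffices to check that every trivial circle is type $0$ or type $1$ and that the remaining (nontrivial) circles are not involved in any $\wsplit$ or $\mixsplit$ entering $D_u$ nor any $\wmerge$ or $\mixmerge$ leaving $D_u$ — but since all trivial circles here are the small nested chain circles, each purely type $0$ or type $1$, the resolution is uniform, and one can invoke Corollary \ref{cor:slightfurtherextension} by taking the $U_i$ as the disks $U_1,\ldots,U_n$ (inside each of which $D_u$ is uniform, being either all type $0$ or all type $1$) together with the observation that depth-$0$ trivial circles all lie inside some $U_i$. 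Then Theorem \ref{thm:expandingclass} (or directly Theorem \ref{thm:perfectlywrappeduniform}) produces a perfectly wrapped uniform resolution and hence Conjecture \ref{con:catwrapcon} for $\hat T$.

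The main obstacle I anticipate is the bookkeeping at the boundary $\partial U_i$: one must confirm that the arcs where the chain circles meet the braided strands are colored consistently, so that the nested chain circles are genuinely all one type and so that the $W$-adjacent-exterior arcs behave as merges rather than splits — in other words, that the local "oriented resolution on the braid, sign-compatible smoothing on the chain'' choices glue to a globally insulated resolution. This is exactly the kind of local-to-global check handled by Corollary \ref{cor:slightfurtherextension}, so I would lean on that corollary rather than re-deriving the insulated condition by hand; the remaining work is to draw the chain resolution carefully and read off that each small circle is type $0$ (or type $1$) with merges on the outside.
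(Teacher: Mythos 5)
Your construction is essentially the paper's: resolve the braided region with the braidlike (oriented) resolution and resolve each chain uniformly, with the choice in each $U_i$ dictated by the sign/orientation of the chain, so that the strands pass through the disks and the leftover small circles are trivial and all of one type. Up to that point you are on track, and the exactly-wrapped claim is fine (the coherent all-upward orientation gives winding number equal to the number of braid strands, so $\wrap(\hat T)$ equals the number of nontrivial circles you produce).

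The genuine gap is in how you propose to verify that the resolution is insulated. Corollary \ref{cor:slightfurtherextension} and Theorem \ref{thm:expandingclass} cannot be used here: both take a \emph{perfectly wrapped} (in particular, already insulated) resolution as a hypothesis and only upgrade almost uniform to uniform, so invoking them to establish insulatedness is circular; likewise Theorem \ref{thm:perfectlywrappeduniform} characterizes perfectly wrapped uniform resolutions rather than producing one. Moreover, uniformity alone does not imply insulatedness (a red arc with both endpoints on a single type~$0$ trivial circle still gives a $\wsplit$ with source $D_u$, which is forbidden), and you have the forbidden directions reversed: what must be excluded are $\wsplit$/$\mixsplit$ \emph{leaving} $D_u$ and $\wmerge$/$\mixmerge$ \emph{entering} $D_u$. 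The correct (and short) fix is the direct inspection the paper performs: in the constructed resolution every surgery arc touching a trivial circle joins two \emph{distinct} circles (each small chain circle meets exactly two arcs, one to each neighbor or to a pass-through strand), so every outgoing cobordism at a red arc is a merge and every incoming cobordism at a blue arc is a split, while arcs in the braided region join distinct nontrivial Seifert circles and hence give only $\veemerge$ or $\veesplit$. With that check written out, your argument coincides with the paper's proof.
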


    \begin{proof}
        To construct a perfectly wrapped uniform resolution $D_u$ of $\hat{T}$, our first step is to take the braidlike resolution (or, in other words, the oriented resolution of $\hat{T}$ with respect to the braidlike orientation) of $\hat{T}$ outside of $\cup_i U_i$. 
        The corresponding partial resolution is displayed in Figure \ref{fig:partialchainresolution}.

        \begin{figure}
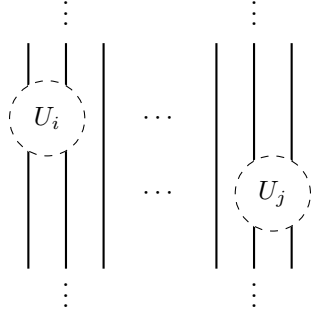

            \centering
            \includestandalone{./paper-images/partial-resolution-of-T-hat}
            \caption{A partial resolution of $\hat{T}$ outside of $\cup_i U_i$}
            \label{fig:partialchainresolution}
        \end{figure}

        For our complete resolution of $\hat{T}$, we work by cases on the orientation of the chain $C_i$ in $U_i$. 
        If $C_i$ is in line with the braid, then we can consider it as part of the braid, and we take the braidlike resolution of $C_i$ as shown in Figure \ref{fig:braidlikeresolutionofchain}. 
        If $C_i$ is not in line with the braid, then we resolve chainlike, as in Figure \ref{fig:chainlikeres}. 

        \begin{figure}
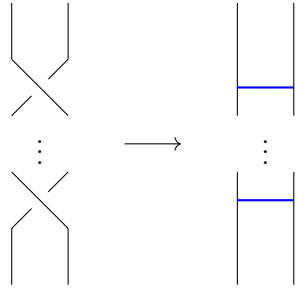

            \centering
            \includestandalone{./paper-images/braidlike-resolution-negative-chain}
            \caption{The braidlike resolution of a negative chain}
            \label{fig:braidlikeresolutionofchain}
        \end{figure}
        
        \begin{figure}
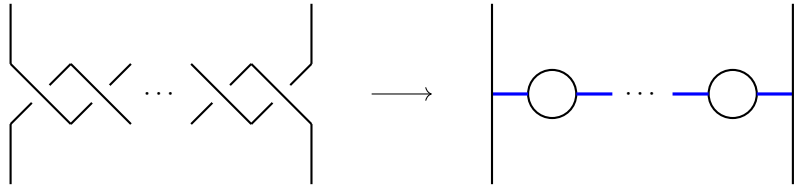

            \centering
            \includestandalone{./paper-images/chainlike-resolution-positive-chain}
            \caption{The chainlike resolution of a negative chain}
            \label{fig:chainlikeres}
        \end{figure}

        The resulting complete resolution $D_u$ is exactly wrapped by construction. 
        Moreover, every trivial circle in $D_u$ arises as a result of some chainlike resolution since $\hat{T}$ is resolved braidlike outside of the $U_i$ disks.
        It follows by inspection that $D_u$ is perfectly wrapped and uniform.
    \end{proof}
    
    \begin{corollary}
        \label{cor:circlecable}
        Consider an $(n,n)$-tangle $T_{k}$ of the type displayed in Figure \ref{fig:tktangle}, where the unlink component encircles any $k$ strands on the trivial $n$-braid in the pictured manner. 
        
        \begin{figure}
            \centering
            \includegraphics[width=0.4\linewidth]{./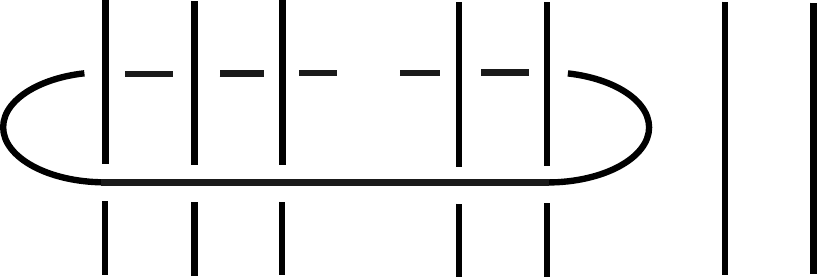}
            \caption{The $(n,n)$-tangle $T_k$}
            \label{fig:tktangle}
        \end{figure}
        
        Let $L$ be the link obtained by vertically stacking $T_{k_1}, ..., T_{k_m}$ along with $n$-braids $B_1, ..., B_l$ in any order, and taking the annular closure. 
        Then $L$ admits at least $2^{m}$ perfectly wrapped uniform resolutions. 
    \end{corollary}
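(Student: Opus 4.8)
The plan is to exhibit $2^m$ explicit resolutions of the given diagram $D$ of $L$ and to check directly, via Definitions~\ref{def:perfectlywrapped}, \ref{def:insulated}, and \ref{def:type01}, that each one is perfectly wrapped and uniform. Write $\beta = B_1 \cdots B_l$ for the composite braid (the identity-braid factors inside the $T_{k_i}$ contribute nothing), so $D$ is the annular closure of the $n$-braid $\beta$ into which $m$ unknotted ``belt'' components have been inserted, the $i$th belt encircling $k_i$ strands within its own horizontal slab as in Figure~\ref{fig:tktangle}. The crossings of $D$ are exactly the crossings of the braids $B_j$ together with the $2k_i$ crossings of the $i$th belt with the strands it encircles. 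First I would record that $\wrap(L) = n$: the displayed diagram meets a meridional arc of the annulus in $n$ points (each braid strand once; the belts can be routed away from the arc), so $\wrap(L) \le \wrap(D) = n$; conversely the diagram shows the winding number of $L$ about the $*$ to be $n$ (each braid strand winds once, coherently; each belt is nullhomologous), and a geometric intersection number dominates the corresponding algebraic one, so $\wrap(L) \ge n$. (This also follows from the Grigsby--Ni computation applied to the braid-closure sublink $\widehat{\beta} \subseteq L$, in the spirit of the discussion after Corollary~\ref{cor:buildingclasseswithpwur}.)

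Next I would build the candidate resolutions. On every crossing of every $B_j$ take the braidlike (oriented) resolution: Seifert's algorithm on a closed $n$-braid yields exactly $n$ nested nontrivial circles, each braid crossing separating two adjacent ones, and no trivial circles. For the $i$th belt, Figure~\ref{fig:tktangle} presents two ways to resolve its $2k_i$ crossings --- the ``all-$0$'' state and the ``all-$1$'' state --- and in either state every circle created inside the belt's disk is nullhomologous, hence trivial, while the $k_i$ strands passing through the belt remain part of the ambient nested nontrivial circles. Making an independent choice $\epsilon_i \in \{0,1\}$ at each of the $m$ belts produces $2^m$ pairwise distinct resolutions $D_{u(\vec\epsilon)}$, $\vec\epsilon \in \{0,1\}^m$, which differ precisely in the bits labelling the belt crossings.

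Now I would verify that each $D_{u(\vec\epsilon)}$ is perfectly wrapped and uniform. It has exactly $n = \wrap(L)$ nontrivial circles --- the $n$ Seifert circles, possibly rerouted slightly through belt arcs but still nontrivial, with no new nontrivial circle created inside any belt --- so it is exactly wrapped. Its only trivial circles lie inside the belt disks, and the ones inside the $i$th belt abut only that belt's $2k_i$ crossings (no braid crossing is adjacent, since the belts occupy disjoint slabs), all coloured $\epsilon_i$; hence each trivial circle is of type $\epsilon_i$ for the belt containing it, so $D_{u(\vec\epsilon)}$ is uniform. For insulation: switching the resolution of a braid crossing merges, or splits, the two adjacent nested nontrivial circles it separates --- a $\veemerge$ or a $\veesplit$, both of which are permitted in Definition~\ref{def:insulated} for either cobordism direction; and switching a belt crossing of the $i$th belt, by inspection of Figure~\ref{fig:tktangle}, connects the trivial belt circle to the adjacent nontrivial strand circle, inducing a $\mixmerge$ when $\epsilon_i = 0$ (necessarily a source cobordism, since the belt crossings are then all $0$-resolutions) and a $\mixsplit$ when $\epsilon_i = 1$ (necessarily a target cobordism). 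Thus $D_{u(\vec\epsilon)}$ is insulated, hence perfectly wrapped, hence perfectly wrapped and uniform. This yields $2^m$ perfectly wrapped uniform resolutions of $D$; by Theorem~\ref{thm:perfectlywrappeduniform} and Lemma~\ref{lma:localtoglobalakh} each certifies Conjecture~\ref{con:catwrapcon} for $L$, and in fact $\rank \AKh(L; \wrap(L)) \ge 2^m$ by Corollary~\ref{cor:lowerboundforwraplrank}.

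The main obstacle is the figure-dependent local bookkeeping: confirming that the ``all-$0$'' and ``all-$1$'' belt states really do produce pure type-$0$ and type-$1$ trivial circles (rather than type-$(0,1)$ or type-$(1,0)$ circles, which would only make $D_{u(\vec\epsilon)}$ \emph{almost} uniform and force an extra appeal to the surgery of Theorem~\ref{thm:expandingclass}), and that flipping a single belt crossing yields exactly a $\mixmerge$ or $\mixsplit$ rather than, say, a merge of two nontrivial circles routed through the belt. A secondary point to be careful about is the independence of the belts --- that they occupy disjoint slabs and so never abut one another or a braid crossing --- which is what lets all $2^m$ choices be made simultaneously and keeps the types of the trivial circles unambiguous.
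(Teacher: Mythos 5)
Your proposal is correct and follows essentially the same route as the paper: resolve each braid $B_j$ braidlike and each belt $T_{k_i}$ in either its all-$0$ or all-$1$ state, obtaining $2^m$ resolutions that are then checked to be perfectly wrapped and uniform. The paper leaves that check to inspection of its two belt-resolution figures, so your additional bookkeeping (the computation $\wrap(L)=n$ via winding number, the disjoint-slab observation, and the case analysis showing the incident cobordisms are of type $\veemerge$, $\veesplit$, $\mixmerge$, or $\mixsplit$) is simply a more explicit rendering of the same argument.
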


    \begin{proof}
        Resolve each $B_i$ braidlike, and take any one of the  resolutions of each $T_{k_j}$ displayed in Figures \ref{fig:circlecable0} and \ref{fig:circlecable1}. 
        
        \begin{figure}
            \centering
            \includegraphics[width=0.4\linewidth]{./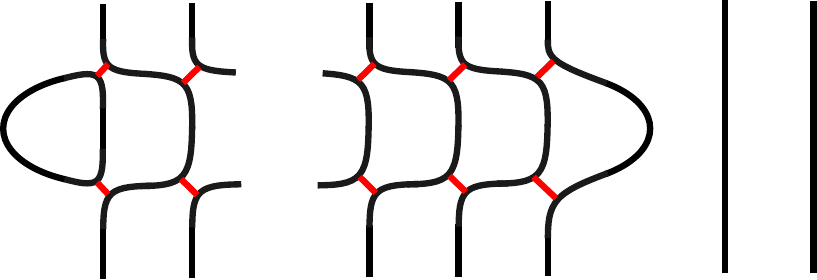}
            \caption{The $0$-type perfectly wrapped uniform resolution}
            \label{fig:circlecable0}
        \end{figure}

        \begin{figure}
            \centering
            \includegraphics[width=0.4\linewidth]{./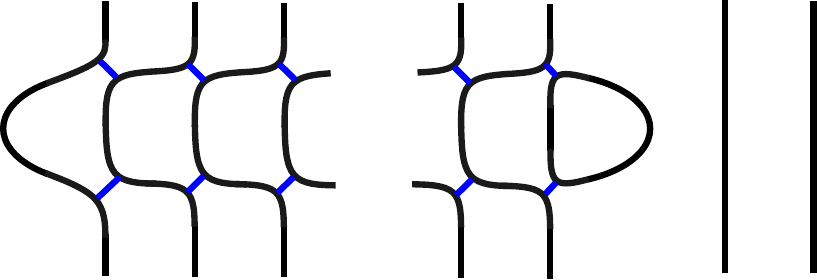}
            \caption{The $1$-type perfectly wrapped uniform resolution}
            \label{fig:circlecable1}
        \end{figure}

        We see that the resulting exactly wrapped resolution $D_u$ is perfectly wrapped and uniform, as every trivial circle in $D_u$ is of the form displayed in either Figure \ref{fig:circlecable0} or Figure \ref{fig:circlecable1}.
    \end{proof} 

    In the spirit of the previous class, we can examine the operation of cabling in the setting of perfectly wrapped uniform resolutions.

    \begin{corollary}
        \label{cor:cablingop}
        Let $D$ be a diagram for an annular link $L$ which admits a perfectly wrapped uniform resolution $D_u$.
        Let $D^n$ be the blackboard-framed $n$-cabling of $D$, representing an annular link $L^n$.
        Then $D^n$ admits a perfectly wrapped uniform resolution $D_v^n$. 
    \end{corollary}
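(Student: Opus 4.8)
The plan is to take the perfectly wrapped uniform resolution $D_u$ of $D$ and cable it in the only reasonable way. For each crossing $c$ of $D$, which is either $0$- or $1$-resolved in $D_u$, I would resolve the entire block of $n^2$ crossings of $D^n$ replacing $c$ ``in parallel'', so that this block becomes the blackboard-framed $n$-cabling of the corresponding resolution $\vertres$ or $\horizres$ of $c$; call the resulting complete resolution $D_v^n$. (That the $n$-cabling of $\vertres$, resp.\ $\horizres$, is a genuine resolution of the cabled block — the all-$0$, resp.\ all-$1$, resolution — is standard and underlies Hoste--Przytycki's cabling argument \cite{HostePrz-skeinmoduleofwhiteheadmanifolds}.) The key structural observation is then that, as an unoriented $1$-manifold in the annulus, $D_v^n$ is precisely the blackboard-framed $n$-cabling of the $1$-manifold $D_u$: each circle $\gamma$ of $D_u$ is replaced by $n$ parallel pushoffs $\gamma^{(1)},\dots,\gamma^{(n)}$ in a tubular neighbourhood of $\gamma$, and since the cabling happens away from the $*$, each $\gamma^{(a)}$ is trivial (resp.\ nontrivial) exactly when $\gamma$ is. In particular $D_v^n$ has $n$ times as many nontrivial circles as $D_u$, i.e.\ $n\cdot\wrap(L)$ of them.

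I would then check that $D_v^n$ is perfectly wrapped and uniform. For \emph{exactly wrapped}, it is enough to note $\wrap(L^n)=n\cdot\wrap(L)$: the inequality $\leq$ follows by $n$-cabling a diagram of $L$ realizing $\wrap(L)$ (a meridional arc chosen to avoid the cabling regions meets the cable $n$ times as often), and $\geq$ follows since $L^n=L^{(1)}\sqcup\cdots\sqcup L^{(n)}$ is a disjoint union of $n$ pushoffs each ambient isotopic to $L$, so any meridional disk meets each $L^{(j)}$ at least $\wrap(L)$ times. For \emph{insulated}: every crossing of $D^n$ lies in some block $B_c$, and switching one crossing of $B_c$ performs a saddle on $D_v^n$ ``parallel'' to the saddle that switching $c$ performs on $D_u$ — it joins a pushoff of one strand of the $c$-resolution to a pushoff of the other. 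If switching $c$ in $D_u$ merges two distinct circles $\gamma_L\neq\gamma_R$, the cabled saddle merges two distinct pushoffs of the same respective types, so it is a cobordism of the same type ($\wmerge$, $\mixmerge$, or $\veemerge$); if switching $c$ in $D_u$ splits a circle, insulatedness of $D_u$ forces that split to be a $\veesplit$ (a trivial circle into two nontrivial circles), and the cabled saddle is then either a $\veesplit$ on a single trivial pushoff or a $\wmerge$ of two distinct trivial pushoffs of that same circle — in every case an allowed outgoing cobordism for $D_v^n$. The incoming cobordisms are handled by the dual analysis, using the target-side clause of insulatedness of $D_u$. I expect this insulatedness check to be the main obstacle: the one genuinely new phenomenon is that a self-cobordism on a circle of $D_u$ need not remain a self-cobordism after cabling, and one must verify that the only new possibility it degenerates to — a $\wmerge$ of two parallel copies of a trivial circle — is still of an allowed type.

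For \emph{uniformity}: every trivial circle of $D_v^n$ is a pushoff $\gamma^{(a)}$ of some trivial circle $\gamma$ of $D_u$, and $\gamma$ is type $0$ or type $1$ by uniformity of $D_u$. If $\gamma$ is type $0$, then every crossing of $D$ abutting $\gamma$ is $0$-resolved in $D_u$, hence the corresponding block $B_c$ is resolved in $D_v^n$ with all $n^2$ crossings at their $0$-resolution, so every surgery arc abutting $\gamma^{(a)}$ is red and $\gamma^{(a)}$ is again type $0$; the type $1$ case is identical with the colours swapped. Hence $D_v^n$ is perfectly wrapped and uniform, which by Theorem \ref{thm:perfectlywrappeduniform} yields the claim and, incidentally, reproves $\wrap(L^n)=n\cdot\wrap(L)$. (Equivalently, one can bypass the explicit count by exhibiting the class $\tilde x\in\kerat(D_v^n)\setminus\imgat(D_v^n)$ obtained from the distinguished generator of Theorem \ref{thm:perfectlywrappeduniform} for $D_u$ by labelling each pushoff with the label of the circle it came from, and then invoking Theorem \ref{thm:perfectlywrappeduniform} directly.)
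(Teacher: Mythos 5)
Your proposal is correct and follows essentially the same route as the paper: resolve each $n\times n$ crossing block of $D^n$ exactly as the corresponding crossing is resolved in $D_u$, observe that $D_v^n$ is the blackboard-framed $n$-cable of $D_u$, and verify uniformity, exact wrapping via $\wrap(L^n)=n\cdot\wrap(L)$, and insulatedness by noting that switching a single crossing in a block is either a saddle of the same type as in $D_u$ or a cobordism between two adjacent parallel pushoffs of a trivial circle, which is still of an allowed type. If anything, your handling of the degenerate insulated case and of the equality $\wrap(L^n)=n\cdot\wrap(L)$ is more detailed than the paper's, which asserts that equality without argument (modulo the small point that the minimal-wrap diagram you cable should be adjusted, e.g.\ by kinks away from a minimal meridional arc, to carry the same blackboard framing as $D$).
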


    \begin{proof}
        At any given crossing $c$ in $D$, we see a grid pattern of crossings in $D^n$. 
        We then resolve each crossing in this grid exactly as we did for $c$.
        Figure \ref{fig:3cable0res} demonstrates this algorithm with a $0$-resolution of $c$ in the case $n = 3$.
        
        \begin{figure}
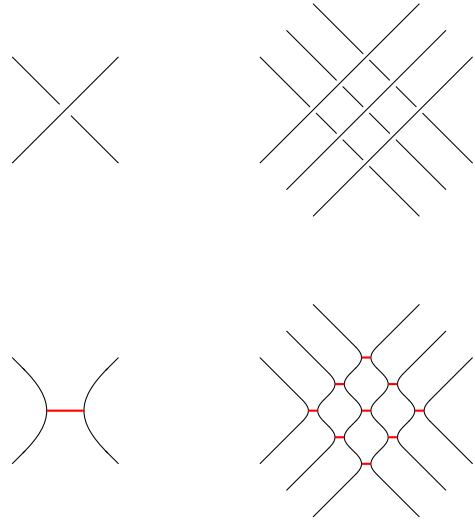

            \centering
            \includestandalone{./paper-images/3-cable-0-resolution}
            \caption{Cabling resolution applied to a $3$-cabling}
            \label{fig:3cable0res}
        \end{figure}

        $D_v^n$ is the blackboard-framed $n$-cabling of $D_u$.
        If a trivial circle in $D_u$ is abutted by all $0$-resolutions, then each of the $n$ corresponding trivial circles in $D_v^n$ are abutted by all $0$-resolutions. 
        The same holds for $1$-resolutions. 
        Hence, $D_u$ is uniform.

        Moreover, 
            \[\wrap(D_v^n) = n \cdot \wrap(D_u) = n \cdot \wrap(L) = \wrap(L^n).\]
        Hence, $D_v^n$ is exactly wrapped.

        Finally, if switching a crossing in $D_u$ enacts some type of cobordism, then switching any crossing in the corresponding grid in $D_v^n$ either enacts that same type of cobordism or results in a merge of two adjacent circles. 
        Therefore, as $D_u$ is insulated, $D_v^n$ is insulated as well.         
    \end{proof}

    \begin{corollary}
        \label{cor:loopinsertion}
        Let $L$ be a link with a diagram $D$ that admits a perfectly wrapped uniform resolution $D_u$. 
        Suppose that $D'$ is a diagram (for a link $L'$), which is identical to $D$ outside of disjoint disks $U_1, ..., U_n$. 
        Each $U_i$ contains a single linked loop, or ``earring,'' as shown in Figure \ref{fig:oncewrappedunlinkcomp}, and no other crossings.
        Then $D'$ admits a perfectly wrapped uniform resolution.
        
        \begin{figure}
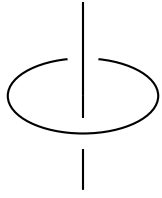

            \centering
            \includestandalone{./paper-images/linked-loop}
            \caption{A linked loop, or earring}
            \label{fig:oncewrappedunlinkcomp}
        \end{figure}
    
    \end{corollary}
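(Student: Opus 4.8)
The plan is to build a perfectly wrapped uniform resolution of $D'$ by starting from $D_u$ and performing a local modification inside each disk $U_i$, choosing that modification so as not to disturb the uniformity already present in $D_u$. Since the $U_i$ are disjoint and $D'$ coincides with $D$ away from them, it is enough to describe the modification in a single $U_i$ and to check the three defining conditions --- exactly wrapped, insulated, uniform --- one disk at a time.

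Fix $U = U_i$, inside which $D'$ is a two-crossing clasp between the earring $E$ and the single strand $S$ it encircles. First I would record two preliminary facts. (a) Because $E$ genuinely links $S$, its two crossings have equal sign; hence, exactly as for the two-crossing diagram of the Hopf link, the smoothing of the pair that separates the loop from the strand occurs in two forms --- one resolving both crossings as $0$-resolutions, the other resolving both as $1$-resolutions --- and in either case $S$ passes through $U$ as a single arc while $E$ becomes a small unknotted circle $E'$ abutting exactly those two crossings, hence abutting only red arcs or only blue arcs, respectively. (b) $\wrap(L') = \wrap(L)$: each earring can be isotoped to hug its strand and slid clear of any meridional arc realizing $\wrap(L)$, so $\wrap(L') \le \wrap(L)$, while $L \subseteq L'$ forces $\wrap(L') \ge \wrap(L)$.

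Now I would define $D_v'$ to agree with $D_u$ outside $\bigcup_i U_i$ and, inside each $U_i$, to be the separating resolution of the earring from (a), choosing the ``both $0$'' form unless the component $C$ of $D_u$ carrying the strand through $U_i$ is a type $1$ trivial circle, in which case choosing the ``both $1$'' form (when $C$ is nontrivial, or is an isolated split unknot that is of both types, either form works; several earrings on one component all use the form dictated by that component). With this definition the circles of $D_v'$ are precisely those of $D_u$ together with the new trivial circles $E_i'$, so $D_v'$ has the same number of nontrivial circles as $D_u$, namely $\wrap(L) = \wrap(L')$, and is therefore exactly wrapped. It is uniform because each $E_i'$ abuts only $0$-resolutions or only $1$-resolutions by construction, while every trivial circle $C$ of $D_u$ has only acquired extra arcs of the single color matching its existing type and so remains type $0$ or type $1$.

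The point that needs the most care is that $D_v'$ is still insulated. For a crossing of $D_v'$ inherited from $D_u$, switching it induces exactly the cobordism it induced in $D_u$: the ambient collection of circles is the same up to disjoint copies of the $E_i'$ and some extra arcs appended to one component, so nothing changes regarding ``the two ends of the surgery arc lie on the same circle or on different circles'' or ``the circles involved are trivial or nontrivial''; hence the insulation of $D_u$ covers these. The only new crossings are the earring crossings, which in $D_v'$ are all $0$-resolutions; switching any one of them from $0$ to $1$ merges the trivial circle $E_i'$ with the component $C$ carrying the strand, so it is a $\wmerge$ if $C$ is trivial and a $\mixmerge$ if $C$ is nontrivial --- both allowed for a cobordism out of $D_v'$ --- and there are no $1$-resolved earring crossings, so they contribute no cobordisms into $D_v'$. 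Thus $D_v'$ is perfectly wrapped and uniform, and Theorem~\ref{thm:perfectlywrappeduniform} finishes the proof (and shows $L'$ satisfies Conjecture~\ref{con:catwrapcon}). I expect the only real obstacle to be this local bookkeeping; the one step genuinely reliant on a picture (Figure~\ref{fig:oncewrappedunlinkcomp}) is the ``equal sign'' claim in (a) and the resulting shape of the two separating resolutions of the earring.
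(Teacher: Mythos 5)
Your construction is essentially the paper's own proof: resolve $D'$ outside the disks exactly as in $D_u$, and inside each $U_i$ take the separating (all-$0$ or all-$1$) resolution of the earring, chosen compatibly with the type of the trivial circle carrying the strand (either choice when that circle is nontrivial); your additional checks that $\wrap(L') = \wrap(L)$ and that inherited crossings induce the same cobordism types are correct and in fact more careful than the paper's. One small slip: your insulation paragraph claims all earring crossings are $0$-resolutions, whereas your own rule uses the all-$1$ form when the carrying circle is type $1$ --- in that case switching an earring crossing back to $0$ gives a cobordism of type $\wsplit$ into $D_v'$, which is among the allowed forms (and there are then no $0$-resolved earring crossings contributing cobordisms out), so the argument goes through unchanged.
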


    \begin{proof}
        We construct a perfectly wrapped uniform resolution $D_v'$ by first considering a partial resolution of $D'$. 
        Outside of $\bigcup_i U_i$, we resolve each crossing in $D'$ as in $D_u$. 
        Now within a given $U_i$, consider two resolutions of the linked loop, displayed in Figure \ref{fig:tworesolutionsoflinkedloops}. 
        We note that for each linked loop $O_i$ in $U_i$, there is a choice of whether to produce a type $0$ or type $1$ trivial circle.

        \begin{figure}
            \centering
            \includegraphics[width=0.5\linewidth]{./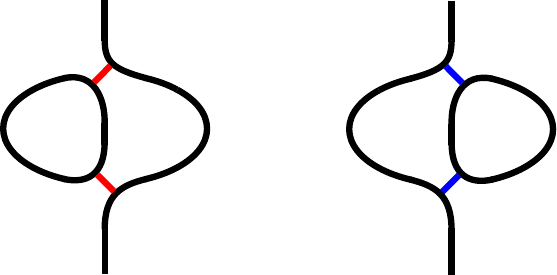}
            \caption{Two resolutions of a linked loop}
            \label{fig:tworesolutionsoflinkedloops}
        \end{figure}

        In our partial resolution of $D'$, the arc passing through a loop $O_i$ may be, thus far, part of some nontrivial circle.
        In this case, we can use either choice of pictured resolution for $O_i$. 
        Alternatively, if the arc passing through $O_i$ is thus far part of some trivial circle, this circle will abut only $0$-resolutions or only $1$-resolutions in the partial resolution. 
        Hence, we resolve $O_i$ in the manner that allows for compatible type. 
        
        Our resulting complete resolution $D_v'$ is functionally identical to $D_u$, but within each $U_i$ we insert a trivial circle of the kind in Figure \ref{fig:tworesolutionsoflinkedloops}. 
        These trivial circles will be of compatible type with the circles they abut. 
        Hence, $D_v'$ is uniform. 
        Moreover, we see immediately that $D_v'$ is perfectly wrapped within each $U_i$. 
        Given that $D_v'$ is identical to $D_u$ outside of $\bigcup_i U_i$, it follows that $D_v'$ is perfectly wrapped. 
    \end{proof}    
    
\bibliographystyle{alpha}
\bibliography{main}

\end{document}